\newcommand{\calB}{\mathcal{B}}
\newcommand{\calC}{\mathcal{C}}
\newcommand{\calO}{\mathcal{O}}
\newcommand{\calM}{\mathcal{M}}
\newcommand{\ZZ}{\mathbb{Z}}
\newcommand{\QQ}{\mathbb{Q}}
\newcommand{\RR}{\mathbb{R}}
\newcommand{\kk}{\Bbbk}
\newcommand{\ab}{\mathbf{a}}
\newcommand{\eb}{\mathbf{e}}
\newcommand{\xb}{\mathbf{x}}
\newcommand{\Hom}{\operatorname{Hom}}
\def\opn#1#2{\def#1{\operatorname{#2}}} 
\opn\Cl{Cl} \opn\conv{conv} \opn\deg{deg} \opn\rank{rank} \opn\Spec{Spec} \opn\Stab{Stab}
\opn\cone{cone} \opn\End{End} \opn\Hom{Hom} \opn\mod{mod} \opn\gldim{gldim} \opn\pdim{pdim}
\opn\Block{Block} \opn\Pyr{Pyr}
\newtheorem{thm}{Theorem}[section]
\newtheorem{lem}[thm]{Lemma}
\newtheorem{prop}[thm]{Proposition}
\theoremstyle{definition}
\newtheorem{defi}[thm]{Definition}
\theoremstyle{remark}
\newtheorem{rem}[thm]{Remark}
\begin{document}

\title{Three families of toric rings arising from posets or graphs with small class groups}
\author{Akihiro Higashitani}
\author{Koji Matsushita}

\address[A. Higashitani]{Department of Pure and Applied Mathematics, Graduate School of Information Science and Technology, Osaka University, Suita, Osaka 565-0871, Japan}
\email{higashitani@ist.osaka-u.ac.jp}
\address[K. Matsushita]{Department of Pure and Applied Mathematics, Graduate School of Information Science and Technology, Osaka University, Suita, Osaka 565-0871, Japan}
\email{k-matsushita@ist.osaka-u.ac.jp}

\subjclass[2020]{
Primary 13F65, 
Secondary 13C20, 
52B20. 
} 
\keywords{Class groups, toric rings, Hibi rings, stable set rings, edge rings.}

\maketitle

\begin{abstract} 
The main objects of the present paper are 
(i) Hibi rings (toric rings arising from order polytopes of posets), 
(ii) stable set rings (toric rings arising from stable set polytopes of perfect graphs), and 
(iii) edge rings (toric rings arising from edge polytopes of graphs satisfying the odd cycle condition). 
The goal of the present paper is to analyze those three toric rings and 
to discuss their structures in the case where their class groups have small rank. 
We prove that the class groups of (i), (ii) and (iii) are torsionfree. 
More precisely, we give descriptions of their class groups. 
Moreover, we characterize the posets or graphs whose associated toric rings have rank $1$ or $2$. 
By using those characterizations, we discuss the differences of isomorphic classes of those toric rings with small class groups. 
\end{abstract}

\bigskip

\section{Introduction}

\subsection{Background}

Toric rings of lattice polytopes are of particular interest in the area of combinatorial commutative algebra. 
Especially, the following three toric rings have been well studied: 
\begin{itemize}
\item Hibi rings, which are toric rings arising from order polytopes; 
\item stable set rings, which are toric rings arising from stable set polytopes; 
\item edge rings, which are toric rings arising from edge polytopes. 
\end{itemize}
For the precise definitions of those toric rings, see Section~\ref{sec:pre}. 
The goal of the present paper is to understand those three toric rings from viewpoints of class groups. 
Specifically, what we would like to do is to give characterizations of Hibi rings, stable set rings and edge rings 
in the case where their class groups have small rank and to discuss the relationships among them.

\subsection*{Order polytopes and Hibi rings}
Let $\calO_\Pi$ denote the order polytope of a given finite poset $\Pi$. 
Order polytopes of posets were introduced by Stanley (\cite{S86}). 
Around that time, Hibi introduced a class of normal Cohen--Macaulay domains $\kk[\Pi]$ arising from posets $\Pi$ (\cite{H87}), 
and it turned out that $\kk[\Pi]$ is isomorphic to the toric ring of the order polytope of $\Pi$. 
Since then, the toric rings of order polytopes of posets $\Pi$ (i.e., $\kk[\Pi]$) are called Hibi rings of $\Pi$. 
A typical example of Hibi rings is Segre products of polynomial rings (see, e.g., \cite[Example 2.6]{HN}). 
Recently, algebraic properties of Hibi rings have been well studied. 
For example, class groups of Hibi rings of posets $\Pi$ are completely characterized in terms of the Hasse diagrams of $\Pi$ (\cite[Theorem]{HHN}). 
Moreover, by using that description, conic divisorial ideals of Hibi rings are also completely described (\cite[Theorem 2.4]{HN}). 
Furthermore, in \cite{N}, the Gorenstein Hibi rings whose class groups are of rank $2$ are investigated from viewpoints of non-commutative crepant resolutions. 

\subsection*{Stable set polytopes and stable set rings}
Let $\Stab_G$ denote the stable set polytope of a given finite simple graph $G$. 
Note that toric rings of stable set polytopes are called stable set rings in \cite{HS}, so we also employ this terminology. 
Stable set polytopes of graphs were introduced by Chv\'{a}tal (\cite{Ch}). Stable set polytopes behave well for perfect graphs. 
For example, the facets of stable set polytopes are completely characterized in the case of perfect graphs (\cite[Theorem 3.1]{Ch}). 
Moreover, it is known that stable set polytopes of perfect graphs are compressed, so those are normal (see, e.g., \cite{MHS}). 
It is noteworthy that stable set polytopes include a class of another kind of polytopes arising from posets, 
called chain polytopes, which were also introduced by Stanley (\cite{S86}). 

\subsection*{Edge polytopes and edge rings}
Let $P_G$ denote the edge polytope of a given finite simple graph $G$. 
Toric rings of edge polytopes of graphs $G$ are known as edge rings of $G$. 
Edge polytopes and edge rings began to be studied by Ohsugi--Hibi (\cite{OH98}) and Simis--Vasconcelos--Villarreal (\cite{SVV}). 
It is proved in \cite{OH98, SVV} that the edge ring of a graph $G$ is normal if and only if $G$ satisfies the odd cycle condition. 
Note that class groups and conic divisorial ideals of edge rings of complete multipartite graphs are investigated in \cite{HM}. 
The toric ideals of graphs and their Gr\"obner basis have been well studied since edge rings were introduced. 
We refer the readers to e.g., \cite[Section 5]{HHO} and \cite[Chapters 10 and 11]{Villa} for the introductions to edge rings or toric ideals of graphs.

\medskip

Our goal is to study those three families of toric rings from viewpoints of their class groups. 
Namely, we discuss the relationships among those toric rings in the case where their class groups have small rank. 

\medskip

\subsection{Results}

Before comparing our three toric rings, we study their class groups in terms of the underlying posets or graphs. 
The first main result is the torsionfreeness of our toric rings: 
\begin{thm}[{See Proposition~\ref{prop:class_stab} and Theorem~\ref{thm:class_edge}}]\label{thm:torsionfree}
Class groups of stable set rings of perfect graphs and edge rings of graphs satisfying the odd cycle condition are torsionfree. 
\end{thm}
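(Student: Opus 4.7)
The plan is to compute the class groups directly by identifying the primitive ray generators of the cone defining each toric ring and verifying that they form a $\ZZ$-generating set for the ambient lattice. For a normal lattice polytope $P$, the class group of $\kk[P]$ fits into an exact sequence
\begin{equation*}
M \longrightarrow \ZZ^{\Sigma(1)} \longrightarrow \Cl(\kk[P]) \longrightarrow 0,
\end{equation*}
where $\Sigma(1)$ indexes the primitive inner facet normals of $\cone(P \times \{1\})$, which live in the lattice $N = \Hom(M, \ZZ)$ dual to the character lattice $M$. Hence $\Cl(\kk[P])$ is torsionfree precisely when those primitive normals generate $N$ as a $\ZZ$-module, and in each case the task reduces to a concrete facet listing together with a lattice-generation check.

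For the stable set ring of a perfect graph $G$ on vertex set $V$, I would invoke Chv\'atal's theorem giving the facets of $\Stab_G$ as the coordinate facets $\{x_v = 0\}$ for $v \in V$ together with the clique facets $\{\sum_{v \in K} x_v = 1\}$ for each maximal clique $K$. After homogenizing, the primitive ray generators of the cone dual to $\cone(\Stab_G \times \{1\})$ are the basis vectors $e_v$ for $v \in V$ together with $e_* - \sum_{v \in K} e_v$ for each maximal clique $K$, where $e_*$ denotes the homogenizing basis vector. These generate $\ZZ^{V \sqcup \{*\}}$: the $e_v$ yield the $\ZZ^V$-summand, and $e_* = (e_* - \sum_{v \in K} e_v) + \sum_{v \in K} e_v$ for any fixed maximal clique $K$. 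Torsionfreeness of the class group then follows.

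For the edge ring of a graph $G$ satisfying the odd cycle condition, I would apply the same template using the known facet description of $P_G$ due to Ohsugi--Hibi. A case split according to whether $G$ is bipartite will be convenient, since when $G$ is bipartite, $P_G$ has codimension two in $\RR^V$ and the ambient lattice $N$ is a proper sublattice of $\ZZ^{V \sqcup \{*\}}$ cut out by a parity condition from the bipartition. In either case one lists the primitive normals and exhibits each basis vector of $N$ as an explicit $\ZZ$-combination. The main obstacle is precisely this verification for edge rings: unlike the stable set case, where the coordinate facets immediately supply $\ZZ^V$, the primitive normals coming from facets of $P_G$ are sums over combinatorially richer vertex subsets, so recovering individual basis vectors should require exploiting a carefully chosen odd cycle or a spanning-tree path in $G$, with the bipartite subcase handled separately inside the appropriate sublattice.
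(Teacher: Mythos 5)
Your reduction is sound, and it is precisely the transpose of the paper's own method: the condition that the primitive facet normals generate the dual lattice $N$ is equivalent, via Smith normal form, to the condition $d_1=\cdots=d_s=1$ in Proposition~\ref{prop:class_group}, and the determinant criterion of Lemma~\ref{lem class group} is just the special case in which $d+1$ of the normals already form a $\ZZ$-basis. For stable set rings your argument is complete and essentially identical to Proposition~\ref{prop:class_stab}: the paper selects $\eb_1,\ldots,\eb_d$ and $-\sum_{j\in Q_0}\eb_j+\eb_{d+1}$ and checks that the determinant is $\pm 1$, which is the same computation as your observation that these vectors generate $\ZZ^{d+1}$.

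For edge rings, however, the proposal has a genuine gap, in two respects. First, the lattice bookkeeping is misplaced: the parity subtlety lives in the \emph{non-bipartite} case, not the bipartite one. For a connected non-bipartite $G$ the lattice generated by the points $\rho(e)$ is $\{x\in\ZZ^d : \sum_i x_i\in 2\ZZ\}$, of index $2$ in $\ZZ^d$, so the dual lattice is $N=\ZZ^d+\ZZ\cdot\tfrac12(1,\ldots,1)\supsetneq\ZZ^d$, and the primitive normal of $H_T$ for a fundamental set $T$ with $V(B(T))=V(G)$ is $\tfrac12\bigl(\sum_{j\in N_G(T)}\eb_j-\sum_{i\in T}\eb_i\bigr)$ --- this is exactly the factor $\tfrac12$ in the paper's $\Psi_f$. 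Ignoring the halving is fatal: for $G=K_3$ every vertex is non-regular, the only normals are the three fundamental-set ones, and the unhalved vectors span a sublattice of index $4$ in $\ZZ^3$, which would predict $2$-torsion, whereas $\Cl(\kk[K_3])=0$ since $\kk[K_3]$ is a polynomial ring. In the bipartite case, by contrast, total unimodularity of the incidence matrix makes the point lattice saturated (it is cut out by linear equations, not a parity condition, and $N$ is then a quotient of $\ZZ^{d+1}$ rather than a sublattice). Second, and more importantly, the generation check itself --- which is the entire content of Theorem~\ref{thm:class_edge} --- is not carried out. To hit the coordinates along a primitive odd cycle one needs, for each cycle edge, a fundamental set whose hyperplane pairs to $1$ with that edge and to $0$ with the remaining cycle edges (Lemma~\ref{fundamental}~(2)); eliminating the other coordinates uses degree-one vertices of a spanning tree together with regularity of all non-cycle vertices; and cut vertices destroy regularity, which is why the paper needs the induction on blocks with the induced fundamental sets of Remark~\ref{remark}. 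Saying that one ``exploits a carefully chosen odd cycle or a spanning-tree path'' names the difficulty but restates the goal rather than resolving it, so the edge-ring half of the theorem remains unproved in your proposal.
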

Note that the class groups of Hibi rings are already characterized in \cite{HHN} and the torsionfreeness also holds. 
Thus, all of our toric rings have torsionfree class groups.

\medskip

The second main results are the characterizations of our toric rings with their class groups $\ZZ$ or $\ZZ^2$: 
\begin{itemize}
\item We characterize the posets $\Pi$ whose Hibi rings have the class groups $\ZZ$ or $\ZZ^2$. See Proposition~\ref{char:order}. 
Remark that this characterization is essentially obtained in \cite[Example 3.1 and Lemma 3.2]{N}. 
\item We characterize the perfect graphs $G$ whose stable set rings have the class groups $\ZZ$ or $\ZZ^2$. See Theorem~\ref{thm:stab}. 
In this case, we can see that each stable set ring is isomorphic to a certain Hibi ring. 
\item We characterize the $2$-connected graphs $G$ whose edge rings have the class groups $\ZZ$ or $\ZZ^2$. 
See Theorem~\ref{thm:bip_class_group} in the case where $G$ is bipartite and Theorem~\ref{thm:nonbip_class_group} in the case where $G$ is non-bipartite. 
Similarly to stable set rings, we can see that each edge ring is isomorphic to a certain Hibi ring. 
\end{itemize}

\medskip

Let ${\bf Order}_n$, ${\bf Stab}_n$ and ${\bf Edge}_n$ denote the sets of unimodular equivalence classes 
of order polytopes, stable set polytopes and edge polytopes such that the associated toric rings have class groups of rank $n$, respectively. 
Namely, those correspond to the sets of isomorphic classes of Hibi rings, stable set rings and edge rings whose class groups have rank $n$, respectively. 
The following relationships follow from the characterizations mentioned above together with some additional examples: 
\begin{itemize}
\item ${\bf Order}_1={\bf Stab}_1={\bf Edge}_1$ (see Subsection~\ref{sec:n=1}); 
\item ${\bf Stab}_2 \cup {\bf Edge}_2 = {\bf Order}_2$ and there is no inclusion between ${\bf Stab}_2$ and ${\bf Edge}_2$ (see Subsection~\ref{sec:n=2}); 
\item there is no inclusion among ${\bf Order}_3$, ${\bf Stab}_3$ and ${\bf Edge}_3$ (see Subsection~\ref{sec:n=3}). 
\end{itemize}

\medskip

\subsection{Organization}

A brief organization of the present paper is as follows. 
In Section~\ref{sec:pre}, we recall some fundamental materials, e.g., toric rings of lattice polytopes and their properties, 
and the definitions of order polytopes, stable set polytopes and edge polytopes. 
We also recall several properties of those polytopes or the associated toric rings. 
In Section~\ref{sec:torsionfreee}, we give a description of class groups of stable set rings of perfect graphs (Proposition~\ref{prop:class_stab}) 
and that of edge rings of connected graphs satisfying the odd cycle condition (Theorem~\ref{thm:class_edge}) in terms of the underlying graphs. 
In Section~\ref{sec:small}, we focus on the case where the class groups of our three toric rings have rank $1$ or $2$. 
Under this assumption, we provide a characterization of Hibi rings (Proposition~\ref{char:order}, but this is essentially obtained in \cite{N}), 
that of stable set rings (Theorem~\ref{thm:stab}) and that of edge rings (Theorems~\ref{thm:bip_class_group} and \ref{thm:nonbip_class_group}). 
In Section~\ref{sec:relation}, we discuss the relationships among ${\bf Order}_n$, ${\bf Stab}_n$ and ${\bf Edge}_n$ in the case where $n \leq 3$.

\medskip

\subsection*{Acknowledgement} 
The first named author is partially supported by JSPS Grant-in-Aid for Scientists Research (C) 20K03513. 

\bigskip

\section{Preliminaries}\label{sec:pre}

The goal of this section is to prepare the required materials for the discussions of the class groups of our toric rings. 

\subsection{Toric rings and Ehrhart rings of lattice polytopes} 
Let us recall the toric rings of lattice polytopes. 
We refer the readers to e.g., \cite{BG2} or \cite{Villa}, for the introduction. 

We call $P \subset \RR^d$ a \textit{lattice polytope} if $P$ is a convex polytope whose vertices sit in $\ZZ^d$. 
Let $\kk$ be a field and let $P \subset \RR^d$ be a lattice polytope. We define the toric ring by setting 
$$\kk[P]=\kk[\xb^\ab t : \ab \in P \cap \ZZ^d],$$ 
where $\xb^\ab =x_1^{a_1}\cdots x_d^{a_d}$ for $\ab=(a_1,\ldots,a_d)\in \ZZ^d$. 
Then $\kk[P]$ is standard graded by setting $\deg(\xb^\ab t)=1$ for each $\ab \in P \cap \ZZ^d$. 
The Krull dimension of $\kk[P]$, denoted by $\dim \kk[P]$, is equal to the dimension of $P$ plus 1, i.e., $\dim \kk[P]=\dim P+1$.

\medskip

Let $P \subset \RR^d$ be a lattice polytope. 
We say that $P$ has the \textit{integer decomposition property} (\textit{IDP}, for short) 
if for any positive integer $n$ and any $\alpha \in nP \cap \ZZ^d$, there exist $\alpha_1,\ldots,\alpha_n \in P \cap \ZZ^d$ 
such that $\alpha=\alpha_1+\cdots+\alpha_n$. 
Note that $\kk[P]$ coincides with the Ehrhart ring of $P$ if $P$ has IDP. (See \cite[Section 10.4]{Villa} for Ehrhart rings.) 
In what follows, we call a lattice polytope which has IDP an \textit{IDP polytope}. 

We say that two lattice polytopes $P,P' \subset \RR^d$ are \textit{unimodularly equivalent} 
if there are a lattice vector ${\bf v} \in \ZZ^d$ and a unimodular transformation $f \in \mathrm{GL}_d(\ZZ)$ such that $P'=f(P)+{\bf v}$. 
Regarding the unimodular equivalence of IDP polytopes and the equivalence of toric rings as graded algebras, we know that 
for two IDP polytopes $P$ and $Q$, the toric rings $\kk[P]$ and $\kk[Q]$ are isomorphic as graded algebras 
if and only if $P$ and $Q$ are unimodularly equivalent (\cite[Theorem 5.22]{BG2}).

\medskip

Let $\langle \cdot , \cdot \rangle$ denote the natural inner product of $\RR^d$. For ${\bf v} \in \RR^d$ and $b \in \RR$, 
we denote by $H^{(+)}({\bf v} ; b)$ (resp. $H({\bf v} ; b)$) an affine half-space $\{{\bf u} \in \RR^d : \langle {\bf u}, {\bf v}\rangle \ge -b\}$ 
(resp. an affine hyperplane $\{{\bf u} \in \RR^d : \langle {\bf u} , {\bf v} \rangle = -b\}$). 
For each facet $F$ of a lattice polytope $P \subset \RR^d$ of dimension $d$, 
there exist a unique primitive lattice vector ${\bf n}_F \in \RR^d$ and integers $p_{F}, q_{F}$ 
with $\mathrm{gcd}(p_{F}, q_{F}) = 1$ and $q_{F}>0$ such that $P \cap H({\bf n}_{F} ; p_{F}/q_{F})=F$, 
where a vector ${\bf n} = (n_1,\ldots,n_N) \in  \ZZ^d$ is called \textit{primitive} if the greatest common divisor of $|n_i|$'s with $n_i \neq 0$ is equal to $1$. 
Let \begin{align*}
\Phi(P)=\{ \tilde{{\bf n}}_{F}=(q_{F}{\bf n}_{F},p_{F})\in \ZZ^d \times \ZZ : F \text{ is a facet of $P$}\}.\end{align*}

Let $\Cl(R)$ denote the class group of a toric ring $R$. 
For the computation of class groups of $\kk[P]$, the following is known: 
\begin{lem}[cf. {\cite[Corollary]{HHN}}]\label{lem class group}
Let $P$ be an IDP polytope of dimension $d$. 
Then the rank $\dim_{\QQ}\Cl(\kk[P]) \otimes_\ZZ \QQ$ of the class group $\Cl(\kk[P])$ is equal to $|\Phi(P)| - (d+1)$. 
Moreover, $\Cl(\kk[P])$ is torsionfree if there exist $d+ 1$ distinct facets $F_i\ (i= 1,\ldots,d+ 1)$ of $P$ 
such that $\det(\tilde{{\bf n}}_{F_1},\ldots,\tilde{{\bf n}}_{F_{d+1}}) = \pm 1$. 
\end{lem}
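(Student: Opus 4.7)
The plan is to derive both parts of the lemma from the standard short exact sequence computing the class group of a normal affine toric variety, applied to the cone over $P$. Since $P$ is IDP, $\kk[P]=\kk[C_P\cap\ZZ^{d+1}]$, where $C_P=\cone\{(\alpha,1):\alpha\in P\cap\ZZ^d\}\subset\RR^{d+1}$ is the cone over $P$ placed at height $1$. Because $\dim P=d$, the cone $C_P$ is strongly convex and full-dimensional in $\RR^{d+1}$, and its facets are in bijection with those of $P$: the facet over $F$ lies in the hyperplane with inner normal $\tilde{{\bf n}}_F=(q_F{\bf n}_F,p_F)$. First I would verify that each $\tilde{{\bf n}}_F$ is primitive in $\ZZ^{d+1}$: any prime $\ell$ dividing every entry would divide $p_F$ and each $q_Fn_{F,i}$, and therefore $\ell\mid q_F$ (otherwise $\ell$ would divide every nonzero $n_{F,i}$, contradicting primitivity of ${\bf n}_F$), violating $\gcd(p_F,q_F)=1$.

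Next, I would invoke the standard class-group sequence for a normal affine toric variety associated with a full-dimensional strongly convex cone, which yields
$$0 \longrightarrow \ZZ^{d+1} \xrightarrow{\psi} \ZZ^{|\Phi(P)|} \longrightarrow \Cl(\kk[P]) \longrightarrow 0,$$
with $\psi({\bf m})=(\langle {\bf m},\tilde{{\bf n}}_F\rangle)_{F}$. Injectivity of $\psi$ uses that the vectors $\tilde{{\bf n}}_F$ span $\RR^{d+1}$, which is guaranteed by the full-dimensionality and strong convexity of $C_P$. Comparing ranks across this exact sequence then immediately yields the first assertion $\dim_{\QQ}\Cl(\kk[P])\otimes_\ZZ\QQ=|\Phi(P)|-(d+1)$.

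For the torsionfreeness, suppose $d+1$ facets $F_1,\ldots,F_{d+1}$ satisfy $\det(\tilde{{\bf n}}_{F_1},\ldots,\tilde{{\bf n}}_{F_{d+1}})=\pm 1$. Letting $\pi\colon\ZZ^{|\Phi(P)|}\to\ZZ^{d+1}$ be the coordinate projection onto the slots indexed by these facets, the composition $\pi\circ\psi$ is represented by the unimodular matrix above and so is an isomorphism $\ZZ^{d+1}\to\ZZ^{d+1}$. Its inverse composed with $\pi$ therefore constitutes a retraction of $\psi$, which forces the sequence to split; thus $\Cl(\kk[P])$ is a direct summand of the free abelian group $\ZZ^{|\Phi(P)|}$ and so is torsionfree. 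The main (and only non-routine) obstacle in this plan is correctly setting up the class-group exact sequence and the primitivity check for $\tilde{{\bf n}}_F$; once these are in place, both assertions reduce to elementary linear algebra over $\ZZ$.
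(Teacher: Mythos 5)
Your proof is correct: the identification $\kk[P]=\kk[C_P\cap\ZZ^{d+1}]$ via IDP, the primitivity check for $\tilde{{\bf n}}_F$, the exact sequence $0\to\ZZ^{d+1}\to\ZZ^{|\Phi(P)|}\to\Cl(\kk[P])\to 0$, and the splitting via the unimodular minor are all sound. The paper does not prove this lemma itself but quotes it from \cite{HHN}, and your argument is exactly the standard toric exact-sequence proof underlying that citation, so it is essentially the same approach.
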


\medskip

Each supporting hyperplane in $P$ can be identified with a linear form. 
Note that the linear form which gives a hyperplane $H$ is not uniquely determined, 
but for a lattice polytope $P$ and a supporting hyperplane $H$ of $P$, 
we can define a unique linear form $\ell_H \in \QQ^d$ with the following condition: 
\begin{itemize}
\item[(i)] $\langle \ell_H, \alpha \rangle \in \ZZ$ for any $\alpha \in P \cap \ZZ^d$; \quad
(ii) $\sum_{\alpha \in P \cap \ZZ^d}\langle \ell_H,\alpha\rangle\ZZ=\ZZ$. 
\end{itemize}
Let \begin{align}\label{eq:psi}\Psi(P)=\{\ell_H : H \text{ is a supporting hyperplane of }P\}.\end{align}

Let $P \subset \RR^d$ be an IDP polytope. 
Given $\alpha \in P \cap \ZZ^d$, we define ${\bf w}_\alpha$ belonging to a free abelian group $\bigoplus_{\ell \in \Psi(P)} \ZZ \eb_\ell$ 
with its basis $\{\eb_\ell\}_{\ell \in \Psi(P)}$ as follows: 
$${\bf w}_\alpha=\sum_{\ell \in \Psi(P)}\langle \ell,\alpha \rangle \eb_\ell.$$
Let $\calM$ be the matrix whose column vectors consist of ${\bf w}_\alpha$ for $\alpha \in P \cap \ZZ^d$. 
In \cite[Section 9.8]{Villa}, the class groups of normal toric rings are discussed. By using those theories, we see the following: 
\begin{prop}[{cf. \cite[Theorem 9.8.19]{Villa}}]\label{prop:class_group}
Work with the same notation as above. Assume that $\Psi(P)$ is irredundant. Then 
$$\Cl(\kk[P]) \cong \bigoplus_{\ell \in \Psi(P)} \ZZ \eb_\ell \big/ \sum_{\alpha \in P \cap \ZZ^d} \ZZ {\bf w}_\alpha.$$ 
In particular, we have 
$$\Cl(\kk[P]) \cong \ZZ^t \oplus \ZZ/d_1\ZZ \oplus \cdots \oplus \ZZ/d_s\ZZ,$$ 
where $s=\rank \calM$, $t=|\Psi(P)|-s$ and $d_1,\ldots,d_s$ are positive integers appearing in the diagonal of the Smith normal form of $\calM$. 
\end{prop}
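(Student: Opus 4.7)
The plan is to recognize this proposition as essentially a restatement of the standard divisor class group formula for normal affine semigroup rings (as cited, \cite[Theorem 9.8.19]{Villa}), translated into the notation $\Psi(P)$, $\mathbf{w}_\alpha$ set up in this section.

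First, I would set up the geometric dictionary. Since $P$ is IDP, the toric ring $\kk[P]$ is the normal semigroup ring $\kk[C \cap \ZZ^{d+1}]$, where $C=\cone(P\times\{1\}) \subset \RR^{d+1}$, and its character lattice $M$ is the subgroup of $\ZZ^{d+1}$ generated by $\{(\alpha,1):\alpha\in P\cap\ZZ^d\}$. The facets of the cone $C$ are exactly $\cone(F\times\{1\})$ for the facets $F$ of $P$; these have primitive inward normals $\tilde{\bf n}_F=(q_F{\bf n}_F,p_F)$, already defined in the excerpt. Under the hypothesis that $\Psi(P)$ is irredundant, the supporting hyperplanes in $\Psi(P)$ are in bijection with the actual facets of $P$, hence with the facets of $C$ and with the height-one torus-invariant prime divisors $D_\ell$ of $\kk[P]$.

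Next, I would match valuations with the linear forms $\ell_H$. For a facet corresponding to $\ell\in\Psi(P)$, the valuation $v_\ell$ of the character $e^{(\alpha,1)}=\xb^\alpha t$ is $\langle(\alpha,1),\tilde{\bf n}_F\rangle=q_F\langle{\bf n}_F,\alpha\rangle+p_F$. Unwinding the normalization (i)--(ii) defining $\ell_H$, this is precisely $\langle\ell,\alpha\rangle$, so $\mathrm{div}(\xb^\alpha t)=\mathbf{w}_\alpha$ as elements of $\bigoplus_{\ell\in\Psi(P)}\ZZ\eb_\ell$.

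Then I would apply the cited formula $\Cl(\kk[P])=\bigl(\bigoplus_\ell\ZZ\eb_\ell\bigr)/\mathrm{div}(M)$. Since the lattice $M$ is generated as a group by the elements $(\alpha,1)$ with $\alpha\in P\cap\ZZ^d$, and since $\mathrm{div}\colon M\to\bigoplus_\ell\ZZ\eb_\ell$ is a group homomorphism, its image is generated by the vectors $\mathrm{div}(e^{(\alpha,1)})=\mathbf{w}_\alpha$. This yields the first displayed isomorphism. The ``in particular'' statement is then the standard structure theorem applied to the cokernel of the presentation matrix $\calM$ via its Smith normal form: the $s=\rank\calM$ nontrivial elementary divisors $d_1,\ldots,d_s$ account for the finite cyclic summands, while the remaining $t=|\Psi(P)|-s$ directions contribute free summands.

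The only real subtlety is the step ``$\mathrm{div}(M)=\sum_\alpha\ZZ\mathbf{w}_\alpha$,'' which relies on the IDP (or, really, already on normality) to ensure that $M$ is indeed generated by the lattice points of $P\times\{1\}$; beyond that, all steps are formal bookkeeping translating the cited theorem into the paper's conventions.
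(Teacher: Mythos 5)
Your proposal is correct and takes essentially the same route as the paper, which offers no argument beyond invoking \cite[Theorem 9.8.19]{Villa}: your write-up simply supplies the standard dictionary (facets of $\cone(P\times\{1\})$ versus the irredundant set $\Psi(P)$, the normalization (i)--(ii) identifying $\langle \ell,\alpha\rangle$ with the valuation of $\xb^\alpha t$ along the corresponding torus-invariant prime, the image of $\mathrm{div}$ being generated by the ${\bf w}_\alpha$ since the character lattice is generated by the $(\alpha,1)$, and Smith normal form for the cokernel of $\calM$). Your closing remark is also well placed, as the generation of the character lattice by the lattice points of $P\times\{1\}$ is exactly the point that makes the normalization (ii) the correct primitivity condition, including in the non-full-dimensional edge-polytope case where the factor $\tfrac{1}{2}$ appears.
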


\medskip

We also recall the notion of a lattice pyramid over a lattice polytope. 
Let $P \subset \RR^d$ be a lattice polytope. We define a new lattice polytope as follows: 
$$\Pyr(P)=\conv( P \times \{0\} \cup \{\eb_{d+1}\}) \subset \RR^{d+1}.$$
We call $\Pyr(P)$ a \textit{lattice pyramid} over $P$. We can see that $$\kk[\Pyr(P)] \cong \kk[P] \otimes_\kk \kk[x].$$ 
In particular, $\Cl(\kk[\Pyr(P)]) \cong \Cl(\kk[P])$. 

\bigskip

\subsection{Hibi rings: toric rings of order polytopes}
In this subsection, we recall what Hibi rings and order polytopes of posets are. 

Let $\Pi$ be a finite partially ordered set (poset, for short) equipped with a partial order $\prec$. 
For a subset $I \subset \Pi$, we say that $I$ is a \textit{poset ideal} of $\Pi$ if $p \in I$ and $q \prec p$ imply $q \in I$. 
For a subset $A \subset \Pi$, we call $A$ an \textit{antichain} of $\Pi$ if $p \not\prec q$ and $q \not\prec p$ for any $p,q \in A$ with $p \neq q$. 
Note that $\emptyset$ is regarded as a poset ideal and an antichain. 

For a poset $\Pi=\{p_1,\ldots,p_d\}$, let \begin{align*}
\calO_\Pi=\{(x_1,\ldots,x_d) \in \RR^d : \; x_i \geq x_j \text{ if } p_i \prec p_j \text{ in }\Pi, \;\; 
0 \leq x_i \leq 1 \text{ for }i=1,\ldots,d \}.
\end{align*}
A convex polytope $\calO_\Pi$ is called the \textit{order polytope} of $\Pi$. 
It is known that $\calO_\Pi$ is a $(0,1)$-polytope and the vertices of $\calO_\Pi$ one-to-one correspond to the poset ideals of $\Pi$ (\cite{S86}). 
In fact, a $(0,1)$-vector $(a_1,\ldots,a_d)$ is a vertex of $\calO_\Pi$ if and only if $\{ p_i \in \Pi : a_i =1 \}$ is a poset ideal. 
The toric ring $\kk[\calO_\Pi]$ is called the \textit{Hibi ring} of $\Pi$. 
We denote the Hibi ring of $\Pi$ by $\kk[\Pi]$ instead of $\kk[\calO_\Pi]$ for short.

\smallskip

We also recall another polytope arising from $\Pi$, which is defined as follows: 
\begin{align*}
\calC_\Pi=\{(x_1,\ldots,x_d) \in \RR^d : \;&x_i \geq 0 \text{ for }i=1,\ldots,d, \\
&x_{i_1}+\cdots+x_{i_k} \leq 1 \text{ for }p_{i_1} \prec \cdots \prec p_{i_k} \text{ in }\Pi\}.\end{align*} 
A convex polytope $\calC_\Pi$ is called the \textit{chain polytope} of $\Pi$. 
Similarly to order polytopes, it is known that $\calC_\Pi$ is a $(0,1)$-polytope and the vertices of $\calC_\Pi$ one-to-one correspond to the antichains of $\Pi$ (\cite{S86}). 

\smallskip

In general, the order polytope and the chain polytope of $\Pi$ are not unimodularly equivalent, 
but the following is known: 

\begin{thm}[{\cite[Theorem 2.1]{HL16}}]\label{X}
Let $\Pi$ be a poset. Then $\calO_\Pi$ and $\calC_\Pi$ are unimodularly equivalent if and only if $\Pi$ does not contain the ``X-shape'' subposet. 
\end{thm}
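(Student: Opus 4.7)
The plan is to analyze both polytopes through Stanley's transfer map $\phi : \calO_\Pi \to \calC_\Pi$, defined componentwise by $\phi(x)_i = x_i - \max\{x_j : p_j \text{ covers } p_i\}$ (with the convention that the maximum is $0$ when $p_i$ is maximal in $\Pi$). This $\phi$ is a piecewise-linear, volume-preserving bijection that restricts to a bijection on lattice points and carries the vertex associated with a poset ideal $I$ to the vertex of $\calC_\Pi$ associated with the antichain of maximal elements of $I$. Since $\calO_\Pi$ and $\calC_\Pi$ therefore already share normalized volume (the number of linear extensions of $\Pi$) and lattice point count, the obstruction to unimodular equivalence must be detected by finer invariants, and the facet structure is the most convenient.

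For the ``only if'' direction (existence of an X-shape implies non-equivalence), I would compare facet counts. The facets of $\calO_\Pi$ are precisely $x_i = x_j$ for each covering relation $p_i \lessdot p_j$, $x_i = 1$ for each minimal element, and $x_i = 0$ for each maximal element, giving $|\text{facets}(\calO_\Pi)| = c + m + M$, where $c, m, M$ denote the numbers of covering relations, minimal elements, and maximal elements of $\Pi$. The facets of $\calC_\Pi$ are $x_i = 0$ for each $i$ together with $x_{i_1}+\cdots+x_{i_k}=1$ for each maximal chain $p_{i_1} \prec \cdots \prec p_{i_k}$, giving $|\text{facets}(\calC_\Pi)| = n + k$ with $n = |\Pi|$ and $k$ the number of maximal chains. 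Any unimodular equivalence forces the two counts to coincide. A direct check on the $5$-element X-poset yields $9 \ne 8$, and I would extend this to a local/inductive counting argument showing that the discrepancy $n + k - c - m - M$ is always nonnegative and strictly positive whenever $\Pi$ contains an X-shape subposet.

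For the ``if'' direction, with $\Pi$ X-free, I would argue by induction on $|\Pi|$. The base cases $|\Pi| \le 3$ are handled by writing down explicit unimodular maps; already the reverse-Y with relations $a \prec b$ and $a \prec c$ illustrates that Stanley's transfer map itself need not be linear, yet the coordinate flip $x_a \mapsto 1 - x_a$ applied to the minimal element yields an honest unimodular equivalence $\calO_\Pi \to \calC_\Pi$. For the inductive step I would locate a distinguished element $p \in \Pi$ — for instance a maximal element with a unique lower cover, whose existence is ensured by X-freeness together with a short combinatorial argument — and show that both $\calO_\Pi$ and $\calC_\Pi$ arise from $\calO_{\Pi \setminus \{p\}}$ and $\calC_{\Pi \setminus \{p\}}$ by the same elementary geometric construction (a lattice pyramid, a prism, or a Cayley sum, depending on the local configuration around $p$). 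Applying the inductive hypothesis and extending the unimodular map by the identity on the new coordinate then closes the argument.

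The main obstacle will be the sufficiency direction. Whereas necessity is ultimately detected by a single numerical invariant, producing a \emph{global} unimodular map in the X-free case demands delicate bookkeeping: Stanley's transfer map is only piecewise linear, and even for very small X-free posets it disagrees with the correct unimodular equivalence. The crux is to show that X-freeness provides enough combinatorial control so that, at each step of the induction, one can pinpoint a removable element whose deletion interacts unimodularly with both polytope constructions \emph{simultaneously}. Establishing the existence of such an element for every X-free poset, and verifying that $\calO_\Pi$ and $\calC_\Pi$ ``peel'' in compatible ways, is what I expect to be the technical heart of the proof.
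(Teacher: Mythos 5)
Note first that the paper does not prove this statement at all: Theorem~\ref{X} is imported verbatim from \cite{HL16}, so your attempt can only be measured against Hibi--Li's proof. Your necessity half follows their route exactly: the facet counts are right ($c+m+M$ for $\calO_\Pi$, $n+k$ for $\calC_\Pi$, and $8\neq 9$ on the X-poset), and unimodular equivalence does force equal facet numbers. But the clause you defer to ``a local/inductive counting argument'' --- that $n+k-(c+m+M)$ is always nonnegative and strictly positive exactly when an X-shape occurs --- is not a routine extension of the five-element check; it is the main combinatorial theorem behind Hibi--Li's necessity argument (the number of maximal chains interacts globally with the Hasse diagram, and the discrepancy is not additive over local configurations). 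As written, this half is a correct plan whose entire mathematical content remains unproven.

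The genuine failure is in your sufficiency half. The distinguished element you propose to peel off need not exist: the diamond $r\prec a\prec s$, $r\prec b\prec s$ is X-free (it has only four elements, and no element has two incomparable elements both below and above it), yet its unique maximal element $s$ has two lower covers and its unique minimal element $r$ has two upper covers, so neither your suggested element nor its dual exists and the induction cannot perform its step. Moreover, even when a maximal $p$ with unique lower cover $q$ exists, the claim that both polytopes arise from $\Pi\setminus\{p\}$ ``by the same elementary construction'' breaks on the chain-polytope side: $\calC_\Pi$ is the region $0\le y_p\le 1-\max(\sum_{\text{chain}} y)$ over $\calC_{\Pi\setminus\{p\}}$, where the maximum runs over maximal chains through $q$, and this height function is piecewise-linear, not affine, whenever $q$ lies on several maximal chains --- so ``extend the unimodular map by the identity on the new coordinate'' has no meaning. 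The actual proof in \cite{HL16} avoids induction entirely: X-freeness is equivalent to the dichotomy that every element has its strict up-set or its strict down-set a chain, and this is used to write down a single explicit global unimodular map --- in effect Stanley's transfer map made linear, combined with flips of the form $x\mapsto 1-x$ along the chains below branching elements. For the diamond, with the paper's conventions, $y_r=1-x_r$, $y_a=x_a-x_s$, $y_b=x_b-x_s$, $y_s=x_s$ carries $\calO_\Pi$ onto $\calC_\Pi$; it is this kind of uniform construction, not a peeling induction, that closes the sufficiency direction, and your proposal is missing it.
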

Here, the ``X-shape'' poset is the poset $\{z_1,z_2,z_3,z_4,z_5\}$ equipped with the partial orders $z_1 \prec z_3 \prec z_4$ and $z_2 \prec z_3 \prec z_5$.

\bigskip

\subsection{Stable set rings: toric rings of stable set polytopes}
In this subsection, we recall stable set polytopes of graphs. 
For the fundamental materials on graph theory, consult, e.g., \cite{Die}. 

Let $G$ be a finite simple graph on the vertex set $V(G)=[d]$ with the edge set $E(G)$, where we let $[d]=\{1,\ldots,d\}$ for $d \in \ZZ_{>0}$. 
Throughout the present paper, we only treat finite simple graphs, so we simply call graphs instead of finite simple graphs. 
We say that $T \subset V(G)$ is an \textit{independent set} or a \textit{stable set} (resp. a \textit{clique}) 
if $\{v,w\} \not\in E(G)$ (resp. $\{v,w\} \in E(G)$) for any distinct vertices $v,w \in T$. 
Note that the empty set and each singleton are regarded as independent sets, and we call such independent sets \textit{trivial}. 

Given a subset $W \subset V(G)$, let $\rho(W)=\sum_{i\in W}\eb_i$, 
where $\eb_i$ denotes the $i$th unit vector of $\RR^d$ for $i \in [d]$ and we let $\rho(\emptyset)$ be the origin of $\RR^d$. 
We define a lattice polytope associated with a graph $G$ as follows: 
\begin{align*}
\Stab_G=\conv(\{\rho(W) : W \text{ is a stable set}\}). 
\end{align*}
We call $\Stab_G$ the \textit{stable set polytope} of $G$.

In what follows, we treat the stable set rings of \textit{perfect graphs}. 
The reason why we focus on perfect graphs is derived from the following: 
\begin{itemize}
\item $\Stab_G$ is compressed if and only if $G$ is perfect, thus, $\Stab_G$ is normal if $G$ is perfect. 
\item the facets of $\Stab_G$ are completely characterized when $G$ is perfect (\cite[Theorem 3.1]{Ch}). 
More concretely, the facets of $\Stab_G$ are exactly defined by the following hyperplanes: 
\begin{equation}\label{facets:stab}
\begin{split}
&H(\eb_i;0) \;\; \text{ for each } i \in [d]; \\ 
&H\left(-\sum_{j\in Q} \eb_j ; 1\right) \;\;\text{ for each maximal clique }Q. 
\end{split}
\end{equation}
\end{itemize}

We prepare some more notation on graphs. 
For a subset $W \subset V(G)$, let $G_W$ denote the induced subgraph with respect to $W$. 
For a vertex $v$, we denote by $G \setminus v$ instead of $G_{V(G) \setminus \{v\}}$. 
Similarly, for $S \subset V(G)$, we denote by $G \setminus S$ instead of $G_{V(G) \setminus S}$. 
For a subgraph $G'$ of $G$ and $S \subset V(G)$, we define $G'+S$ to be the subgraph of $G$ on the vertex set $V(G')\cup S$ 
with the edge set $E(G')\cup \{\{v,w\} : v\in S, w\in V(G'), \{v,w\} \in E(G)\}$. 
Similarly, for $v\in V(G)$, we denote by $G'+v$ instead of $G'+\{v\}$. 
Given $v \in V(G)$, let $N_G(v)=\{w \in V(G) : \{v,w\} \in E(G)\}$. For $S \subset V(G)$, let $N_G(S)=\bigcup_{v \in S}N_G(v)$. 

\bigskip

\subsection{Edge rings: toric rings of edge polytopes}
In this subsection, we recall what edge rings and edge polytopes of graphs are. 

For a positive integer $d$, consider a graph $G$ on the vertex set $V(G)=[d]$ with the edge set $E(G)$. 
We define a lattice polytope associated to $G$ as follows: 
\begin{align*}
P_G=\conv(\{\rho(e) : e \in E(G)\}). 
\end{align*}
We call $P_G$ the \textit{edge polytope} of $G$. 

Moreover, we also define the edge ring of $G$, denoted by $\kk[G]$, 
as a subalgebra of the polynomial ring $\kk[t_1,\ldots,t_d]$ in $d$ variables over a field $\kk$ as follows: 
$$\kk[G] = \kk[t_it_j : \{i,j\} \in E(G)].$$ 
Note that the edge ring of $G$ is nothing but the toric ring of $P_G$. 
We have that $\dim P_G=d-b(G)-1$, where $b(G)$ is the number of bipartite connected components of $G$ (see \cite[Proposition 10.4.1]{Villa}). 
Thus, $\dim\kk[G]=d-b(G)$. 

\medskip

It is known that $\kk[G]$ is normal (i.e., $P_G$ has IDP) if and only if $G$ satisfies the \textit{odd cycle condition}, i.e., 
for each pair of odd cycles $C$ and $C'$ with no common vertex, there is an edge $\{v,v'\}$ with $v \in V(C)$ and $v' \in V(C')$ 
(see \cite[Corollary 10.3.11]{Villa}).

\medskip

The following terminologies are used in \cite{OH98}: 
\begin{itemize}
\item We call a vertex $v$ of $G$ \textit{regular} (resp., \textit{ordinary}) if each connected component of $G \setminus v$ contains an odd cycle (resp., if $G \setminus v$ is connected). 
Note that a non-ordinary vertex is usually called a \textit{cut vertex} of $G$. 
\item Given an independent set $T \subset V(G)$, let $B(T)$ denote the bipartite graph on $T \cup N_G(T)$ with the edge set $\{\{v,w\} : v \in T, w \in N_G(T)\} \cap E(G)$. 
\item When $G$ has at least one odd cycle, a non-empty set $T \subset V(G)$ is said to be a \textit{fundamental set} if the following conditions are satisfied: 
\begin{itemize}
\item $B(T)$ is connected; 
\item $V(B(T))=V(G)$, or each connected component of $G \setminus V(B(T))$ contains an odd cycle. 
\end{itemize}
\item A graph $G$ is called bipartite if $V(G)$ can be decomposed into two sets $V_1,V_2$, called the partition, such that $E(G) \subset V_1 \times V_2$. 
\item When $G$ is a bipartite graph, a non-empty set $T \subset V(G)$ is said to be an \textit{acceptable set} if the following conditions are satisfied: 
\begin{itemize}
\item $B(T)$ is connected; 
\item $G \setminus V(B(T))$ is a connected graph with at least one edge. 
\end{itemize}
\end{itemize}

Given $i \in [d]$, let 
$$H_i=\{(x_1,\ldots,x_d) \in \RR^d : x_i=0\} \;\text{ and }\; H_i^{(+)}=\{(x_1,\ldots,x_d) \in \RR^d : x_i \geq 0\}.$$ 
Given an independent set $T \subset [d]$, let 
\begin{align*}
H_T&=\left\{(x_1,\ldots,x_d) \in \RR^d : \sum_{j \in N_G(T)}x_j - \sum_{i \in T}x_i = 0\right\} \;\text{ and }\\
H_T^{(+)}&=\left\{(x_1,\ldots,x_d) \in \RR^d : \sum_{j \in N_G(T)}x_j - \sum_{i \in T}x_i \geq 0\right\}.
\end{align*}
It is proved in \cite[Theorem 1.7]{OH98} that for any non-bipartite (resp., bipartite) graph $G$, each facet of $P_G$ is defined by 
a supporting hyperplane $H_i$ for some regular (resp., ordinary) vertex $i$ or $H_T$ for some fundamental (resp., acceptable) set. Let 
\begin{align*}
\widetilde{\Psi}=\begin{cases}
\{H_i : \text{$i$ is a regular vertex}\} \cup \{ H_T : \text{$T$ is a fundamental set}\} & \text{if $G$ is non-bipartite}, \\
\{H_i : \text{$i$ is an ordinary vertex}\} \cup \{ H_T : \text{$T$ is an acceptable set}\} & \text{if $G$ is bipartite}. \end{cases}
\end{align*}
Although $\widetilde{\Psi}$ describes all supporting hyperplanes of the facets of $P_G$, 
it might happen that $H_i$ and $H_T$ define the same facet for some $i$ and $T$ if $G$ is bipartite. 

\begin{prop}
Let $G$ be a connected bipartite graph that has the partition $V(G)=V_1\sqcup V_2$. 
Then $\widetilde{\Psi}'=\{H_i : \text{$i$ is an ordinary vertex}\} \cup \{ H_T : \text{$T\subset V_1$ is an acceptable set}\}$ 
is the irredundant set of supporting hyperplanes of the facets of $P_G$.
\end{prop}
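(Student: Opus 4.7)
The plan is to verify two things: that every supporting hyperplane of a facet of $P_G$ appears in $\widetilde{\Psi}'$, and that distinct elements of $\widetilde{\Psi}'$ cut out distinct facets. Throughout, I will exploit the fact that on $\operatorname{aff}(P_G)$ the identities $\sum_{i\in V_1}x_i=\sum_{j\in V_2}x_j=1$ hold, so that the linear form $L(x):=\sum_{i\in V_1}x_i-\sum_{j\in V_2}x_j$ vanishes on $\operatorname{aff}(P_G)$.

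Starting from the description $\widetilde{\Psi}$, I first observe that any acceptable set $T$ must be contained in $V_1$ or in $V_2$: if both $T\cap V_1$ and $T\cap V_2$ were nonempty, the independence of $T$ and the bipartite structure of $G$ force $B(T)$ to split as the disjoint union of $B(T\cap V_1)$ and $B(T\cap V_2)$, contradicting its connectedness. Thus it is enough to show that for each acceptable $T\subset V_2$ there exists an acceptable $T^{\vee}\subset V_1$ with $H_T=H_{T^{\vee}}$.

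The candidate is $T^{\vee}:=V_1\setminus N_G(T)$. The subgraph $G\setminus V(B(T))$ has vertex set $T^{\vee}\cup(V_2\setminus T)$ and, by acceptability of $T$, is connected with at least one edge; in particular it has no isolated vertex, which forces $T^{\vee}\neq\emptyset$ and $N_G(T^{\vee})=V_2\setminus T$. A short verification then gives $B(T^{\vee})=G\setminus V(B(T))$ and $G\setminus V(B(T^{\vee}))=B(T)$, so $T^{\vee}$ is acceptable. Writing $f_S:=\sum_{j\in N_G(S)}x_j-\sum_{i\in S}x_i$ for the defining form of $H_S$, a direct expansion yields
\[ f_T-f_{T^{\vee}}=\sum_{i\in V_1}x_i-\sum_{j\in V_2}x_j=L, \]
so $H_T$ and $H_{T^{\vee}}$ coincide on $\operatorname{aff}(P_G)$ and cut out the same facet of $P_G$.

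For irredundancy, the decisive case is two distinct acceptable $T_1,T_2\subset V_1$ defining the same facet. This forces $f_{T_1}=\lambda f_{T_2}+\mu\bigl(\sum_{i\in V_1}x_i-1\bigr)+\nu\bigl(\sum_{j\in V_2}x_j-1\bigr)$ for some $\lambda\neq 0$. Comparing coefficients of $x_i$ ($i\in V_1$) across the partition by membership in $T_1,T_2$ and of $x_j$ ($j\in V_2$) across the partition by membership in $N_G(T_1),N_G(T_2)$, together with the constant equation $\mu+\nu=0$ and the fact that each $T_k\subsetneq V_1$ (acceptability forbids $T_k=V_1$), forces either $T_1=T_2$ or else $V_1=T_1\sqcup T_2$, $V_2=N_G(T_1)\sqcup N_G(T_2)$, with no edges between $T_k$ and $N_G(T_\ell)$ for $k\neq\ell$, which contradicts the connectedness of $G$. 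The analogous checks $H_i\neq H_j$ (for distinct ordinary $i,j$) and $H_i\neq H_T$ (for ordinary $i$ and acceptable $T\subset V_1$) are shorter coefficient comparisons. The most delicate step I anticipate is the identification $N_G(T^{\vee})=V_2\setminus T$: one inclusion is immediate, and the reverse is precisely where the connectedness clause in the definition of acceptability is essential. The rest amounts to coefficient bookkeeping.
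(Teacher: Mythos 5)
Your proposal is correct and follows essentially the same route as the paper: you split acceptable sets by the bipartition, pair each acceptable $T\subset V_2$ with $T^{\vee}=V_1\setminus N_G(T)$ via the identifications $B(T^{\vee})=G\setminus V(B(T))$ and $G\setminus V(B(T^{\vee}))=B(T)$, and show $H_T$ and $H_{T^{\vee}}$ cut the same facet using the relations $\sum_{i\in V_1}x_i=\sum_{j\in V_2}x_j=1$ on $\operatorname{aff}(P_G)$, exactly as the paper does. The only difference is that you carry out the irredundancy check (that distinct members of $\widetilde{\Psi}'$ define distinct facets) explicitly, by comparing coefficients modulo the two-dimensional space of affine relations and invoking the connectedness of $G$ to exclude the splitting $V_1=T_1\sqcup T_2$, $V_2=N_G(T_1)\sqcup N_G(T_2)$ --- a verification the paper asserts without detail, and your case analysis there is sound.
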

\begin{proof}
We show that we can choose the set of accetable sets $T$ as a subset of $V_1$ and it is irredundant. 
It easily follows that either $T\subset V_1$ or $T\subset V_2$ holds if $T$ is acceptable. 
If $T\subset V_1$ is acceptable, then $B(T)$ and $G\setminus V(B(T))$ are connected with at least one edge. 
Therefore, set $T'=V_2\setminus N_G(T)$ and we can see that $B(T')=G\setminus V(B(T))$ and $G\setminus V(B(T'))=B(T)$, so $T'$ is an acceptable set contained in $V_2$. 
Conversely, if $S\subset V_2$ is acceptable, then there exists an acceptable set $S'\subset V_1$ with $S=V_2\setminus N_G(S')$. 
Thus, acceptable sets contained in $V_1$ one-to-one correspond to ones contained in $V_2$. 
Moreover, for an acceptable set $T\subset V_1$, $H_T$ and $H_{T'}$ define the same facet 
since $P_G$ is contained in the hyperplane $$\left\{(x_1,\ldots,x_d)\in \RR^d : \sum_{i\in V_1}x_i=\sum_{j\in V_2}x_j=1\right\}.$$ 
This implies that $\displaystyle \sum_{j\in N_G(T)}x_j-\sum_{i\in T}x_i=\sum_{i\in N_G(T')}x_i-\sum_{j\in T'}x_j.$ 
\end{proof}

\bigskip

\section{Class groups of toric rings and their torsionfreeness}\label{sec:torsionfreee}

In this section, we discuss descriptions of the class groups of Hibi rings, stable set rings and edge rings 
in terms of the underlying posets or graphs. As their corollary, we see that their class groups are torsionfree.

\subsection{Class groups of Hibi rings}
First, we consider the class groups of Hibi rings. 
In \cite{HHN}, the description of class groups of Hibi rings is provided, which we describe below. 

Let $\Pi$ be a poset and let $|\Pi|=d$. 
Let $\widehat{\Pi}=\Pi \sqcup \{\hat{0},\hat{1}\}$, 
where $\hat{0}$ (resp. $\hat{1}$) is a new minimal (resp. maximal) element not belonging to $\Pi$. 
Thus, $|\widehat{\Pi}|=d+2$. 
Let $n$ be the number of the edges of the Hasse diagram of $\widehat{\Pi}$. 
Then it is proved in \cite{HHN} that \begin{align}\label{eq:Hibi}\Cl(\kk[\Pi]) \cong \ZZ^{n-d-1}.\end{align} 
In particular, $\Cl(\kk[\Pi])$ is torsionfree.

\bigskip

\subsection{Class groups of stable set rings}
Next, we discuss the class groups of stable set rings of perfect graphs. 
\begin{prop}\label{prop:class_stab}
Let $G$ be a perfect graph with maximal cliques $Q_0, Q_1,\ldots, Q_n$. 
Then $\Cl(\kk[\Stab_G]) \cong \ZZ^n$. In particular, $\Cl(\kk[\Stab_G])$ is torsionfree. 
\end{prop}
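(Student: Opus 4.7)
The plan is to apply Lemma~\ref{lem class group} directly, once the facet data from Chv\'atal's description is in hand.

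First, I would read off from~\eqref{facets:stab} that $\Stab_G$ has exactly $d + (n+1)$ facets: the $d$ non-negativity facets $H(\eb_i;0)$ for $i\in[d]$, together with the $n+1$ clique facets $H\bigl(-\sum_{j\in Q_k}\eb_j;1\bigr)$ for $k=0,\ldots,n$. Since $\Stab_G$ contains the origin and each unit vector $\eb_1,\ldots,\eb_d$, it is $d$-dimensional, so $|\Phi(\Stab_G)|=d+n+1$, and Lemma~\ref{lem class group} immediately gives $\dim_{\QQ}\Cl(\kk[\Stab_G])\otimes_{\ZZ}\QQ=(d+n+1)-(d+1)=n$.

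Second, for torsionfreeness I would exhibit $d+1$ facets whose vectors $\tilde{\bf n}_F$ have determinant $\pm 1$. The entries of $\eb_i$ and of $-\sum_{j\in Q_k}\eb_j$ all lie in $\{0,\pm 1\}$, so both are already primitive, and a direct check shows $\tilde{\bf n}_F=(\eb_i,0)$ for the facet $H(\eb_i;0)$ and $\tilde{\bf n}_F=\bigl(-\sum_{j\in Q_k}\eb_j,\,1\bigr)$ for the facet cut out by the maximal clique $Q_k$. Taking the $d$ non-negativity facets together with the one coming from $Q_0$, the resulting $(d+1)\times(d+1)$ matrix has the block-triangular form
\[
\begin{pmatrix} I_d & -\sum_{j\in Q_0}\eb_j \\ \mathbf{0} & 1 \end{pmatrix},
\]
whose determinant equals $1$. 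By Lemma~\ref{lem class group}, $\Cl(\kk[\Stab_G])$ is torsionfree; combined with the rank computation this yields $\Cl(\kk[\Stab_G])\cong\ZZ^n$.

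No real obstacle is expected: the whole argument reduces to invoking Lemma~\ref{lem class group} once the facet structure has been read off from~\eqref{facets:stab}. The only step requiring a brief check is the identification of the primitive normals and of the pairs $(p_F,q_F)$, which is immediate from the $\{0,\pm 1\}$-entries of $\eb_i$ and $-\sum_{j\in Q_k}\eb_j$. Note also that the choice of which clique to place alongside the non-negativity facets is irrelevant, so any maximal clique $Q_k$ would work equally well.
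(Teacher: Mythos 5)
Your proposal is correct and follows essentially the same route as the paper: read off $\Phi(\Stab_G)$ from Chv\'atal's facet description \eqref{facets:stab}, deduce the rank $|\Phi(\Stab_G)|-(d+1)=n$, and then pick the $d$ vectors $\tilde{\bf n}_F=(\eb_i,0)$ together with $\bigl(-\sum_{j\in Q_0}\eb_j,\,1\bigr)$, whose determinant is $1$, so Lemma~\ref{lem class group} gives torsionfreeness and hence $\Cl(\kk[\Stab_G])\cong\ZZ^n$. The only difference is that you spell out the dimension count and the identification of the pairs $(p_F,q_F)$, which the paper leaves implicit.
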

\begin{proof}
As described in \eqref{facets:stab}, we have 
$$\Phi(\Stab_G)=\{\eb_i : i=1,\ldots,d\} \cup \left\{-\sum_{j\in Q_i}\eb_j +\eb_{d+1} : i=0,1,\ldots,n \right\},$$ 
where $\Phi$ is as in \eqref{eq:psi}. Thus, $|\Phi(\Stab_G)|-(d+1)=n$. 
By choosing $\eb_1,\ldots,\eb_d$ and $-\sum_{j\in Q_0}\eb_j+\eb_{d+1}$ from $\Phi(\Stab_G)$, 
we obtain that $\det(\eb_1,\ldots,\eb_d,-\sum_{j\in Q_0}\eb_j+\eb_{d+1}) = 1$. 
Hence, Lemma~\ref{lem class group} implies that $\Cl(\kk[\Stab_G]) \cong \ZZ^n$. 
\end{proof}

\bigskip
\subsection{Class groups of edge rings}
Finally, we discuss the class groups of edge rings of connected graphs satisfying the odd cycle condition.

Let $\Psi=\Psi_r \cup \Psi_f$ (resp., $\Psi=\Psi_o \cup \Psi_a$) if $G$ is non-bipartite (resp., bipartite), where \begin{align*}
&\Psi_r=\{\ell_{H_i} : i \text{ is a regular vertex}\} \text{, } \Psi_f=\{\ell_{H_T} : T \text{ is a fundamental set}\}, \\
&\Psi_o=\{\ell_{H_i} : i \text{ is an ordinary vertex}\} \text{ and } \Psi_a=\{\ell_{H_T} : T \text{ is an acceptable set}\}.
\end{align*}

In particular, if $G$ is a connected graph, then we obtain that 
\begin{align*}
&\Psi_r=\{\eb_i : i \text{ is a regular vertex}\} \text{, } \Psi_o=\{\eb_i : i \text{ is an ordinary vertex}\}, \\
&\Psi_f=\begin{dcases}
\left\{\ell_{H_T}=\sum_{j\in N_G(T)}\eb_j-\sum_{i\in T}\eb_i : T \text{ is a fundamental set}\right\} \text{ if $V(B(T))\ne V(G)$}, \\
\left\{\ell_{H_T}=\frac{1}{2}\left(\sum_{j\in N_G(T)}\eb_j-\sum_{i\in T}\eb_i\right) : T \text{ is a fundamental set}\right\} \text{ if $V(B(T))=V(G)$}, 
\end{dcases}\\
&\Psi_a=\left\{\ell_{H_T}=\sum_{j\in N_G(T)}\eb_j-\sum_{i\in T}\eb_i : T\subset V_1 \text{ is an acceptable set}\right\}. 
\end{align*}
Note that $\frac{1}{2}$ appears in the case of $V(B(T))=V(G)$ since $\langle\ell_{H_T},\rho(e) \rangle=0$ or $2$ in this case, 
while $\langle\ell_{H_T},\rho(e) \rangle=1$ happens otherwise. 

\medskip

Let us fix some notation on graph theory. 
For a graph $G$, a \textit{path} is a non-empty subgraph $P=p_0 p_1 \cdots p_k$ of $G$ 
on the vertex set $V(P)=\{p_0, p_1,\ldots,p_k\}$ with the edge set $E(P)=\{ \{p_0, p_1\}, \{p_1,p_2\},\ldots,\{p_{k-1}, p_k \} \}$, 
where $p_i$'s are all distinct. Then we say that the vertices $p_0$ and $p_k$ are \textit{connected by $P$} and 
$p_0$ and $p_k$ are called its \textit{end vertices} or \textit{ends}. 
The \textit{interior} of $P$, denoted by $P^{\circ}$, is the vertices except for $p_0,p_k$. 
A \textit{cycle} is a non-empty subgraph $C=p_0 p_1 \cdots p_k p_0$ on the vertex set $V(C)=\{p_0, p_1,\ldots,p_k\}$ 
with the edge set $E(C)=\{ \{p_0, p_1\}, \{p_1,p_2\},\ldots,\{p_{k-1}, p_k \}, \{p_k, p_0 \} \}$, where $p_i$'s are all distinct. 

For an edge $e$ which is not an edge of a path $P$ (resp. a cycle $C$), 
$e$ is called a \textit{chord} of $P$ (resp. $C$) if $e$ joins two vertices of $P$ which are not end vertices (resp., two vertices of $C$). 
A path (resp., cycle) which has no chord is called \textit{primitive}. 

A \textit{block} of a graph $G$ means a $2$-connected component of $G$. Namely, a block contains no cut vertex. 
Let $A$ denote the set of cut vertices of $G$, and $\calB$ the set of its blocks. 
We then have a natural bipartite graph on the vertex set $A\sqcup \calB$ 
with the edge set $\{\{a,B\} : a\in B\text{ for }a\in A \text{ and } B\in \calB\}$. 
We call this bipartite graph the \textit{block graph} of $G$, denoted by $\Block(G)$. 
Note that $\Block(G)$ is a tree if $G$ is connected. 

\bigskip

The following lemma will be used for the proofs of our results in many times. 
\begin{lem}\label{fundamental}
Let $G$ be a non-bipartite connected graph. 
\begin{itemize}
\item[(1)] Suppose that $S$ is an independent set of $G$ such that $B(S)$ is connected. 
Then there exists a fundamental set $T$ such that $S\subset T$ and $V(B(T))=V(G)$. 
\item[(2)] Let $C=p_0 p_1 \cdots p_{2k} p_0$ be a primitive odd cycle of length $2k+1$ in $G$. 
Then, for each $i=0,\ldots,2k$, there exists a fundamental set $T_i$ such that 
$E(C)\setminus \{p_i,p_{i+1}\}\subset E(B(T_i))$ and $\{p_i,p_{i+1}\} \notin E(B(T_i))$, 
where $p_{2k+1}=p_0$. In particular, $G$ has at least $2k+1$ fundamental sets.
\end{itemize}
\end{lem}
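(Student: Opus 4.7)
The plan is to prove part (1) by a greedy vertex-addition argument, and then deduce part (2) by choosing an alternating seed along the primitive odd cycle and invoking part (1).

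For part (1), I would build $T$ inductively starting from $T_0 = S$. At any stage with $V(B(T_j)) \ne V(G)$, connectedness of $G$ produces a vertex $v \in V(G) \setminus V(B(T_j))$ adjacent to some vertex of $V(B(T_j))$. Since $v \notin V(B(T_j)) = T_j \cup N_G(T_j)$, the vertex $v$ has no neighbor in $T_j$, so $T_{j+1} := T_j \cup \{v\}$ remains independent; moreover $v$'s neighbor in $V(B(T_j))$ must lie in $N_G(T_j)$, so the star at $v$ glues onto $B(T_j)$ along this neighbor and $B(T_{j+1})$ stays connected. Because $|V(B(T_j))|$ strictly grows at every step and $V(G)$ is finite, the process terminates at some $T \supset S$ with $V(B(T)) = V(G)$; such a $T$ is automatically a fundamental set.

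For part (2), I would apply part (1) to a seed taken from every second vertex of $C$. Writing the path obtained by deleting $\{p_i, p_{i+1}\}$ from $C$ as $v_0 v_1 \cdots v_{2k}$ with $v_0 = p_{i+1}$ and $v_{2k} = p_i$, set $S = \{v_1, v_3, \ldots, v_{2k-1}\}$. Primitivity of $C$ forces any two non-consecutive vertices of $C$ to be non-adjacent in $G$, so $S$ is independent, and $B(S)$ contains the whole path, hence is connected. By part (1) there is a fundamental set $T_i \supset S$ with $V(B(T_i)) = V(G)$. Since $T_i$ is independent and each $v_0, v_2, \ldots, v_{2k}$ lies in $N_G(S) \subset N_G(T_i)$, none of them belongs to $T_i$. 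Consequently every edge of the path joins $T_i$ with $N_G(T_i) \setminus T_i$, so lies in $B(T_i)$, while $\{p_i, p_{i+1}\} = \{v_0, v_{2k}\}$ has both endpoints in $N_G(T_i)$ and so is not in $B(T_i)$. Distinctness of $T_0, \ldots, T_{2k}$ is then immediate from $\{p_i, p_{i+1}\} \in E(B(T_j))$ for $j \ne i$ but $\{p_i, p_{i+1}\} \notin E(B(T_i))$, yielding at least $2k+1$ fundamental sets.

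The only subtleties I expect to pin down are that the greedy step in part (1) preserves independence of $T$ (which rests on the observation $v \notin N_G(T_j)$), and that in part (2) none of the even-indexed $v_{2m}$ is inadvertently added to $T$ during the extension; the latter is automatic because each $v_{2m}$ already belongs to $V(B(S))$ from the outset and so is never a legitimate candidate for addition. Beyond these bookkeeping checks, the construction involves no difficult ingredient.
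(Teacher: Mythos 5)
Your proof is correct and follows essentially the same route as the paper's: the greedy augmentation in (1) preserving independence and connectedness of $B(\cdot)$ is exactly the paper's argument, and your alternating seed $\{v_1,v_3,\ldots,v_{2k-1}\}$ coincides with the paper's choice $S=\{p_2,p_4,\ldots,p_{2k}\}$ after relabeling. You merely make explicit the verification that the paper compresses into ``a proof directly follows from (1)'' --- namely that independence of $T_i$ keeps the even-indexed vertices in $N_G(T_i)\setminus T_i$, so the path edges lie in $B(T_i)$ while $\{p_i,p_{i+1}\}$ does not.
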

\begin{proof}
(1) If $V(G)= V(B(S))$, then $S$ itself satisfies the required property. Suppose that $V(B(S)) \subsetneq V(G)$. 
Then there exists $v\in V(G)\setminus V(B(S))$ such that $v$ and $w$ are adjacent for some $w\in N_G(S)$ since $G$ is connected. 
Thus, $S'=S\cup \{v\}$ is an independent set and $B(S')$ is connected. 
We repeat this application and we eventually obtain $S'$ which satisfies that $B(S')$ is connected and $V(B(S'))=V(G)$, as required. 

\noindent
(2) Fix $i=0$. 
By setting $S=\{p_2,p_4,\ldots,p_{2k}\}$, we can see that $S$ is an independent set since $C$ is primitive and 
$B(S)$ is a connected graph with $E(C)\setminus \{p_0,p_1\}\subset E(B(S))$ and $\{p_i,p_{i+1}\} \notin E(B(S))$. 
A proof directly follows from (1). 
\end{proof}

\begin{rem}\label{remark}
Let $G$ be a non-bipartite connected graph with a cut vertex $v$ and let $C_1,\ldots,C_n$ be connected components of $G\setminus v$. 
For $i=1,\ldots,n$, let $G_i=C_i+v$. Suppose that $G_1$ contains an odd cycle and let $T$ be a fundamental set in $G_1$. 

If $v\in V(B(T))$, then there exists a fundamental set $T'$ in $G$ with $V(B(T'))=V(B(T)) \cup \bigcup_{i=2}^n V(G_i)$. 
We can construct it similarly to Lemma~\ref{fundamental} (1). 
We call this fundamental set $T'$ an \textit{induced fundamental set} of $T$. 
Note that an induced fundamental set is not unique but for distinct fundamental sets $T$ and $S$ in $G_1$ with $v\in V(B(T)) \cap V(B(S))$, 
their induced fundamental sets are distinct. Moreover, if $v$ is a regular vertex in $G$, 
then there exists a fundamental set $T''$ in $G$ with $V(B(T''))=\bigcup_{i=2}^n V(G_i)$ in the same way. 
We regard $T''$ as an induced fundamental set of the empty set although the empty set is not fundamental.

If $v\notin V(B(T))$, then $T$ is also a fundamental set in $G$. 
Therefore, we can observe that $|\Psi_f (G)|\ge |\Psi_f (G_1)|$ and $|\Psi_f (G)|\ge |\Psi_f (G_1)|+1$ if $v$ is regular in $G$. 
\end{rem}

\begin{lem}\label{lem:linear_depend}
Let $G$ be a graph. 
\begin{itemize}
\item[(1)] Let $e_1,\ldots,e_{2k}$ be the edges of an even cycle in $G$. 
Then $$w_{\rho(e_1)},\ldots,w_{\rho(e_{2k})}$$ are linearly dependent. \\
\item[(2)] Let $C$ and $C'$ be two odd cycles and let $e_1,\ldots,e_{2k+1}$ (resp. $e_1',\ldots,e_{2k'+1}'$) be the edges of $C$ (resp. $C'$). 
\begin{itemize}
\item[(2-1)] Assume that $C$ and $C'$ have a unique common vertex. Then 
$$w_{\rho(e_1)},\ldots,w_{\rho(e_{2k+1})},w_{\rho(e_1')},\ldots,w_{\rho(e_{2k'+1}')}$$ are linearly dependent. 
\item[(2-2)] Assume that $C$ and $C'$ have no common vertex but there is a path whose edges are $f_1,\ldots,f_m$ between $C$ and $C'$ connecting them. 
Then $$w_{\rho(e_1)},\ldots,w_{\rho(e_{2k+1})},w_{\rho(e_1')},\ldots,w_{\rho(e_{2k'+1}')},w_{\rho(f_1)},\ldots,w_{\rho(f_m)}$$ 
are linearly dependent. 
\end{itemize}
\end{itemize}
\end{lem}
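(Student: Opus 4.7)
The plan is to exploit the $\ZZ$-linearity of the assignment $\alpha \mapsto w_\alpha$. Since $w_\alpha = \sum_{\ell \in \Psi(P_G)} \langle \ell, \alpha \rangle \eb_\ell$ and each pairing $\langle \ell, \alpha \rangle$ is linear in $\alpha$, any $\ZZ$-linear relation $\sum_i c_i \rho(e_i) = 0$ in $\RR^d$ immediately yields the relation $\sum_i c_i w_{\rho(e_i)} = 0$ in the target free abelian group. So in every case it suffices to exhibit a nontrivial integer linear relation among the vectors $\rho(e_i)$ themselves; the combinatorics of cycles and paths will do all the work.

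For (1), write the even cycle as $C = v_0 v_1 \cdots v_{2k-1} v_0$ with $e_i = \{v_{i-1}, v_i\}$ for $i < 2k$ and $e_{2k} = \{v_{2k-1}, v_0\}$, and consider the alternating sum $\sum_{i=1}^{2k} (-1)^i \rho(e_i)$. Each vertex $v_j$ lies in exactly the two consecutive edges $e_j$ and $e_{j+1 \bmod 2k}$, which carry opposite signs, so the coefficient of $\eb_{v_j}$ cancels to zero. Hence the alternating sum vanishes, which supplies the desired relation among the $w_{\rho(e_i)}$.

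The same style of computation applied to an odd cycle $C = v_0 v_1 \cdots v_{2k} v_0$ gives $\sum_{i=1}^{2k+1}(-1)^{i+1}\rho(e_i) = 2\eb_{v_0}$, because the first and the last edges both carry the $+$ sign and both contain $v_0$. For (2-1), we choose $v_0$ to be the unique common vertex of $C$ and $C'$; the analogous identity for $C'$ also gives $2\eb_{v_0}$, and subtracting the two expressions produces a nontrivial vanishing integer combination of $\rho(e_1),\ldots,\rho(e_{2k+1}),\rho(e_1'),\ldots,\rho(e_{2k'+1}')$. For (2-2), label the connecting path as $P = p_0 p_1 \cdots p_m$ with $p_0 \in V(C)$, $p_m \in V(C')$, and $f_i = \{p_{i-1}, p_i\}$; the analogous alternating-sign sum along $P$ yields $\sum_{i=1}^m (-1)^{i+1}\rho(f_i) = \eb_{p_0} + (-1)^{m+1}\eb_{p_m}$. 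Combining this with the odd-cycle identity applied to $C$ (with $v_0 = p_0$) and to $C'$ (with $v_0 = p_m$), the $C'$-contribution taken with a sign determined by the parity of $m$, eliminates both $\eb_{p_0}$ and $\eb_{p_m}$ and produces a nontrivial integer relation among all the listed $\rho$-vectors.

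There is no real obstacle here: the argument reduces entirely to elementary sign bookkeeping on cycles and paths, and uses nothing about the specific form of $\Psi(P_G)$ beyond the linearity of $\alpha \mapsto w_\alpha$. The only point that requires care is the parity split in (2-2), to verify that the signs chosen for the three combined identities really do cancel both $\eb_{p_0}$ and $\eb_{p_m}$ simultaneously.
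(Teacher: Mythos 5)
Your proposal is correct and follows essentially the same route as the paper: both reduce the statement, via the linearity of $\alpha \mapsto w_\alpha$, to exhibiting an alternating-sign integer relation among the vectors $\rho(e)$ along the even cycle, the pair of odd cycles glued at a vertex, or the two odd cycles joined by the path with the parity-dependent sign on the second cycle. Your intermediate identities $\sum_i(-1)^{i+1}\rho(e_i)=2\eb_{v_0}$ for an odd cycle and $\sum_i(-1)^{i+1}\rho(f_i)=\eb_{p_0}+(-1)^{m+1}\eb_{p_m}$ for the path just make explicit the bookkeeping behind the three vanishing combinations the paper writes down directly, including the same even/odd split on $m$ in case (2-2).
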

\begin{proof}
(1) We see that \begin{align*}
\sum_{i=1}^{2k}(-1)^iw_{\rho(e_i)}&=\sum_{i=1}^{2k}(-1)^i\sum_{\ell \in \Psi} \langle \ell, \rho(e_i) \rangle \eb_{\ell} 
=\sum_{\ell \in \Psi} \langle \ell, \sum_{i=1}^{2k}(-1)^i \rho(e_i) \rangle \eb_{\ell} 
=\sum_{\ell \in \Psi} \langle \ell, {\bf 0} \rangle \eb_{\ell}={\bf 0}. 
\end{align*}

\noindent
(2) In the case (2-1), let $e_1 \cap e_{2k+1} \cap e_1' \cap e_{2k'+1}'$ be the unique common vertex of $C$ and $C'$. 
In the case (2-2), let $P$ be the path consisting of $f_1,\ldots,f_m$ 
which connects the vertex $e_1 \cap e_{2k+1}$ of $C$ and $e_1' \cap e_{2k'+1}'$ of $C'$. 
Then we see the following: 
\begin{align*}
&\sum_{i=1}^{2k+1}(-1)^iw_{\rho(e_i)}-\sum_{i=1}^{2k'+1}(-1)^iw_{\rho(e_i')}={\bf 0}; \\
&\sum_{i=1}^{2k+1}(-1)^iw_{\rho(e_i)}-\sum_{i=1}^{2k'+1}(-1)^iw_{\rho(e_i')}-2\sum_{j=1}^m(-1)^jw_{\rho(f_j)}={\bf 0} \text{ if $m$ is even}; \\
&\sum_{i=1}^{2k+1}(-1)^iw_{\rho(e_i)}+\sum_{i=1}^{2k'+1}(-1)^iw_{\rho(e_i')}-2\sum_{j=1}^m(-1)^jw_{\rho(f_j)}={\bf 0} \text{ if $m$ is odd}. 
\end{align*}
\end{proof}

\begin{prop}[{cf. \cite[Proposition 10.1.48]{Villa}}]\label{product}
Let $G$ be a graph.
\begin{itemize}
\item[(1)] Let $G_1,\ldots,G_n$ be the connected components of $G$. 
Then we have $\kk[G]\cong \kk[G_1]\otimes \cdots \otimes \kk[G_n]$. 
Therefore, $\Cl(\kk[G]) \cong \Cl(\kk[G_1]) \oplus \cdots \oplus \Cl(\kk[G_n])$. 
\item[(2)] Suppose that $G$ is connected and let $B_1,\ldots, B_m$ be the blocks of $G$. 
If there is at most one non-bipartite block among them, then we have $\kk[G]\cong \kk[B_1]\otimes \cdots \otimes \kk[B_m]$. 
Therefore, $\Cl(\kk[G]) \cong \Cl(\kk[B_1]) \oplus \cdots \oplus \Cl(\kk[B_m])$.
\end{itemize}
\end{prop}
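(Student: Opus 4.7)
Part (1) is immediate: since the vertex sets $V(G_i)$ are pairwise disjoint, the generators of the subalgebras $\kk[G_i]\subset \kk[t_v : v\in V(G_i)]$ involve pairwise disjoint variables, so the natural multiplication map $\bigotimes_{i=1}^n \kk[G_i]\to \kk[G]$ is an isomorphism of graded $\kk$-algebras and the class group splits accordingly.

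For (2), I would argue by induction on the number $m$ of blocks, the case $m=1$ being trivial. Since $\Block(G)$ is a tree in which every cut vertex has degree at least $2$, every leaf of $\Block(G)$ is a block; choose any leaf block $B$ and let $v$ be its unique cut vertex. Set $G'=G\setminus (V(B)\setminus \{v\})$. Then $G'$ is connected, its blocks are $\{B_1,\ldots,B_m\}\setminus \{B\}$, and the hypothesis ``at most one non-bipartite block'' is inherited by $G'$. By induction, $\kk[G']\cong \bigotimes_{B_i\neq B}\kk[B_i]$, so it suffices to prove the two-block statement $\kk[G]\cong \kk[G']\otimes_\kk \kk[B]$. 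A crucial preparatory observation is that at least one of $G'$ and $B$ is bipartite: if $B$ is non-bipartite, then it is the unique non-bipartite block of $G$, every block of $G'$ is bipartite, and since every cycle of $G'$ lies inside a single block, $G'$ itself is bipartite.

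I would prove the two-block claim by a lattice argument. View $\kk[G']$, $\kk[B]$, and $\kk[G]$ as semigroup subalgebras of $\kk[t_v:v\in V(G)]$ with semigroups $S_{G'}$, $S_B$, and $S_G=S_{G'}+S_B\subset \ZZ_{\geq 0}^{V(G)}$. The multiplication map $\mu\colon \kk[G']\otimes_\kk \kk[B]\to \kk[G]$ is an isomorphism if and only if the induced monoid map $S_{G'}\oplus S_B\to S_G$ is bijective, equivalently $(\ZZ S_{G'})\cap (\ZZ S_B)=\{0\}$ in $\ZZ^{V(G)}$. Any element of this intersection is supported on $V(G')\cap V(B)=\{v\}$, so equals $k\eb_v$ for some $k\in\ZZ$. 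Taking the bipartite side, say $B$ with bipartition $V(B)=U_1\sqcup U_2$ and $v\in U_1$, every generator $\rho(e)$ of $S_B$ has equal $U_1$- and $U_2$-coordinate sums, a property preserved by $\ZZ$-linear combinations; since $k\eb_v$ has $U_1$-sum $k$ and $U_2$-sum $0$, we are forced to $k=0$. The main, and essentially only, obstacle is pinpointing where the hypothesis is used, and it is precisely this step: if both $B$ and $G'$ carried odd cycles through $v$, each side would contribute a nonzero even multiple of $\eb_v$ to its lattice, the intersection $(\ZZ S_{G'})\cap (\ZZ S_B)$ would be nontrivial, and $\mu$ would fail to be injective; bipartiteness on at least one side rules this out, closing the induction.
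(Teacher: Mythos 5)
Your proof is correct, but note that there is no internal proof to compare it against: the paper states this proposition with the citation to \cite[Proposition 10.1.48]{Villa} and uses it as a known fact. Your argument is a clean, self-contained substitute. Peeling off a leaf block of $\Block(G)$ and reducing to the two-factor case is the standard skeleton; the distinctive part of your write-up is the lattice criterion $(\ZZ S_{G'})\cap(\ZZ S_B)=\{0\}$, localized at the cut vertex $v$, with the bipartition functional $\sum_{U_1}-\sum_{U_2}$ killing $k\eb_v$ -- this is exactly where the hypothesis enters, and your closing remark (an odd closed walk through $v$ on each side puts $2\eb_v$ in both lattices) correctly shows the hypothesis is sharp, not just sufficient. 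An alternative standard route, closer to what one extracts from the cited reference, is a dimension count: $\kk[G']\otimes_\kk\kk[B]=\kk[S_{G'}\oplus S_B]$ is an affine semigroup domain, $\mu$ is surjective, and by $\dim\kk[H]=|V(H)|-b(H)$ the two sides have equal Krull dimension precisely under the at-most-one-non-bipartite-block hypothesis, so a surjection of affine domains of equal dimension is injective; your explicit computation buys the converse direction at no extra cost. Two points to tidy: state explicitly that the blocks partition $E(G)$, so $E(G')=E(G)\setminus E(B)$ and $S_G=S_{G'}+S_B$ (used for surjectivity); and the class-group clause tacitly assumes normality (in the paper's applications $G$ satisfies the odd cycle condition), where the splitting $\Cl(R\otimes_\kk S)\cong\Cl(R)\oplus\Cl(S)$ holds because $S_{G'}\oplus S_B$ is again a normal affine semigroup whose facets are those of the two factors.
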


Now, we are ready to discuss the description of $\Cl(\kk[G])$ and show its torsionfreeness for $G$ satisfying the odd cycle condition. 
\begin{thm}\label{thm:class_edge}
Let $G$ be a connected graph satisfying the odd cycle condition. 
Then $\Cl(\kk[G]) \cong \ZZ^{|\Psi|-\dim \kk[G]}$. In particular, $\Cl(\kk[G])$ is torsionfree.
\end{thm}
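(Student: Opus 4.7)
The plan is to combine Proposition~\ref{prop:class_group} with an explicit construction of a unimodular minor of the structure matrix $\calM$. By that proposition, writing $\calM$ for the integer matrix whose columns are ${\bf w}_{\rho(e)}$ for $e \in E(G)$ (which exhaust the lattice points of $P_G$) and whose rows are indexed by $\Psi$, one has $\Cl(\kk[G]) \cong \ZZ^{|\Psi|}/\mathrm{im}\,\calM$. The theorem thus reduces to verifying (i) $\rank \calM = \dim\kk[G]$ and (ii) $\mathrm{im}\,\calM \subset \ZZ^{|\Psi|}$ is a saturated sublattice.

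Part (i) is essentially Lemma~\ref{lem class group} applied to $P_G$, using $\dim P_G + 1 = \dim\kk[G]$ and the fact that $|\Psi|$ counts exactly the facets of $P_G$; when $G$ is bipartite one first projects $P_G$ onto its affine hull to make it full-dimensional.

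For (ii), I plan to exhibit $s:=\dim\kk[G]$ pairs $(e_j,\ell_j)$ for which the matrix $A=(\langle \ell_i,\rho(e_j)\rangle)_{i,j=1}^{s}$ is upper triangular with diagonal entries $\pm 1$. Pick a spanning tree $T$ of $G$, and in the non-bipartite case include one additional edge $e_0$ that closes a primitive odd cycle $C$ together with $T$. Root $T$, and for each non-root vertex $v$ let $e_v$ be the parent edge of $v$ in $T$. If $v$ is regular (non-bipartite case) or ordinary (bipartite case), pair $e_v$ with $\ell_v:=\eb_v\in \Psi$. Otherwise, invoke Lemma~\ref{fundamental} together with Remark~\ref{remark} to construct a fundamental (resp.\ acceptable) set $S_v$ supported in the subtree rooted at $v$ such that $\langle \ell_{H_{S_v}},\rho(e_v)\rangle=\pm 1$ while $\langle \ell_{H_{S_v}},\rho(e_w)\rangle=0$ for every descendant $w$ of $v$, and set $\ell_v:=\ell_{H_{S_v}}$. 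In the non-bipartite case the extra edge $e_0$ is paired with the form $\ell_{H_{T_C}}$ of the fundamental set attached to $C$ via Lemma~\ref{fundamental}(2). A depth-first ordering of the vertices inward from the leaves of $T$ then makes $A$ upper triangular with unit diagonal.

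The main obstacle I anticipate lies in constructing $S_v$ when $v$ is neither regular nor ordinary: $\ell_{H_{S_v}}$ must single out the edge $e_v$ while simultaneously annihilating every descendant edge in the chosen column set. This is clean when $G$ has at most one non-bipartite block, because Proposition~\ref{product}(2) then decomposes $\Cl(\kk[G])$ as a direct sum over blocks and each block can be treated separately. The genuinely delicate case is when $G$ has several non-bipartite blocks, where local fundamental sets coming from different blocks must be coordinated through the cut vertices, which requires a recursive use of the induced fundamental set construction of Remark~\ref{remark}.
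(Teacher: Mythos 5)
Your overall strategy is the same as the paper's: reduce via Proposition~\ref{prop:class_group} to showing $\rank\calM=\dim\kk[G]$ with trivial invariant factors, and realize the unimodularity by pairing the edges of a spanning tree (plus one extra edge closing a primitive odd cycle in the non-bipartite case, i.e.\ the paper's subgraph $G'$) with rows coming from regular/ordinary vertices and from fundamental sets supplied by Lemma~\ref{fundamental}(2). However, your proof is incomplete exactly where you flag the ``genuinely delicate case'': you never construct the set $S_v$ for a cut vertex $v$ that is neither regular nor ordinary, and this case is not vacuous under the odd cycle condition --- two odd cycles sharing a vertex are allowed, so $G$ may have several non-bipartite blocks glued at cut vertices (e.g.\ two triangles meeting in a point). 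For such $v$ the bare existence statement of Lemma~\ref{fundamental} does not give the entry control you need: the row of $\ell_{H_T}$ takes the value $2$ (or $1$ after the halving that occurs only when $V(B(T))=V(G)$) on \emph{every} edge joining two vertices of $N_G(T)$, not just on the one edge you want to single out, so annihilating all descendant columns is a genuine constraint, not an automatic consequence. The paper fills this hole by an induction on the number of blocks: it chooses a cut vertex $v$ so that $\Block(G)\setminus v$ leaves pendant blocks whose $G'$-restrictions are trees, converts those columns to unit vectors using regularity of their non-cut vertices, passes to the smaller graph $H$, and uses Remark~\ref{remark} to transport regular vertices and (induced) fundamental sets from $H$ back to $G$; the one problematic row --- when $v$ is regular on $H$ but not on $G$ --- is supplied by the induced fundamental set of the empty set, whose row has entry $1$ precisely on the edges of $H$ meeting $v$ and $0$ elsewhere, so it can stand in for the missing row $\eb_v$. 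Some version of this recursion (or an equivalent device) must actually be carried out; citing Remark~\ref{remark} as a black box does not yield the claimed $S_v$.

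A smaller technical point: strict upper-triangularity of your matrix $A$ is not achievable as stated, because the fundamental-set row attached to a cycle edge by Lemma~\ref{fundamental}(2) is guaranteed to vanish only on the \emph{other cycle edges}, not on tree edges with both endpoints in $N_G(T)$. This is harmless --- since the tree columns have already been transformed into unit vectors, residual entries in their positions can be cleared by column operations, which is why the paper phrases the whole argument as a sequence of elementary transformations of $\calM$ rather than as the extraction of a triangular minor --- but your write-up should account for it.
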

\begin{proof}
By proposition~\ref{prop:class_group}, it is enough to show that $\rank \calM=\dim \kk[G]$ and $d_1=\cdots =d_s=1$.

\medskip

\noindent\underline{The case where $G$ is bipartite}: 

We may assume that $G$ is $2$-connected by Proposition~\ref{product}. Take a spanning tree $T$ of $G$. 
For any $e'\in E(G)\setminus E(T)$, the subgraph $T'$ obtained by adding $e'$ to $T$ has an even cycle containing $e'$. 
We see that $w_{\rho(e)}$'s for $e\in E(T')$ are linearly dependent by Lemma~\ref{lem:linear_depend}, 
so we can erase the columns corresponding to the edges $e'\in E(G)\setminus E(T)$ in $\calM$ by using $e\in T$. 
Moreover, we consider the row corresponding to  (the supporting hyperplane of) the ordinary vertex $v$ whose degree is $1$ in $T$. 
Since $G$ is $2$-connected, i.e., every vertex in $G$ is ordinary, the entry corresponding to the edge $e_0$ which joins $v$ is $1$ 
and the other entries are all $0$ in the row. Therefore, $w_{\rho(e_0)}$ can be transformed into a unit vector. 
We repeat this transformation for $T\setminus v$. Then we can see that $w_{\rho(e)}$'s for $e\in E(T)$ are linearly independent, 
that is, $\rank \calM=|T|=d-1=\dim \kk[G]$ and $d_1=\cdots=d_s=1$. 

\medskip

\noindent\underline{The case where $G$ is non-bipartite}: 

Let $B_1,\ldots,B_m$ be the blocks of $G$. We prove the assertion by induction on $m$. 

Let $G'$ be a connected subgraph $G'$ of $G$ satisfying the following properties: 
\begin{itemize}
\item $G'$ is a spanning subgraph of $G$;
\item $G'$ has $d$ edges;
\item $G'$ has exactly one primitive odd cycle $C=p_0\cdots p_{2k} p_0$. 
\end{itemize}

In the case $m=1$, for any $e'\in E(G)\setminus E(G')$, consider the subgraph $G''$ obtained by adding $e'$ to $G'$. 
Then $G''$ satisfies one of the following conditions: 
\begin{itemize}
\item[(i)] $G''$ contains an even cycle; 
\item[(ii)] $G''$ contains two odd cycles and they have a unique common vertex; 
\item[(iii)] $G''$ contains two odd cycles $C'$ and $C''$ with no common vertex but there is a path between $C'$ and $C''$ connecting them. 
\end{itemize}
We can see that $w_{\rho(e)}$'s for $e\in E(G'')$ are linearly dependent by Lemma~\ref{lem:linear_depend}. 
Moreover, since $G$ is $2$-connected, i.e., every vertex in $G$ except for $V(C)$ is regular, 
$w_{\rho(e)}$'s for $e\in E(G')\setminus E(C)$ can be transformed into a unit vector by the same discussions above. 
For $\{p_i,p_{i+1}\}$ ($i=0,\ldots,2k$), take a fundamental set $T$ satisfying Lemma~\ref{fundamental} (2). 
Then the entry corresponding to the edge $\{p_i,p_{i+1}\}$ is $1$ and the other entries are all $0$ in the row corresponding to (the supporting hyperplane of) the fundamental set $T$. 
Thus, $w_{\rho(\{p_i,p_{i+1}\})}$ can be transformed into a unit vector. 
Hence, we conclude that $\rank \calM=|G'|=d=\dim \kk[G]$ and $d_1=\cdots=d_s=1$.

Let $m\ge 2$. Then there exists $B_i$ containing a unique primitive odd cycle $C$ such that $G'_{V(B_j)}$ is a tree for $j\ne i$. 
We may assume that $i=1$. Note that all vertices in $G$ are regular on $G$ except for cut vertices of $G$ and $p_0,\ldots,p_{2k}$. 
Then we can find a cut vertex $v$ of $G$ such that the subgraph $\Block(G)\setminus v$ of $\Block(G)$ 
has a unique connected component containing $B_1$ and the other components are isolated vertices; 
these are blocks $B_{i_1},\ldots,B_{i_l}$ such that $B'_{i_j}=G'_{V(B_{i_j})}$ are trees. 
Since every vertex in $\bigcup_{j\in [l]}V(B_{i_j})$ is regular except for $v$, 
$w_{\rho(e)}$'s for $e\in \bigcup_{j\in [l]}E(B'_{i_j})$ can be transformed into a unit vector. 
Let $H=G\setminus \left(\bigcup_{j\in [l]}V(B_{i_j})\setminus \{v\}\right)$. 
As mentioned in Remark~\ref{remark}, if a vertex $u \ne v$ on $H$ is regular, then $u$ is also regular on $G$, 
and if $S$ is a fundamental set on $H$, then $S$ or an induced fundamental $S'$ is fundamental on $G$. 
Although $v$ is not regular on $G$, it might happen that $v$ is regular on $H$. 
If $v$ is regular on $H$, we can take an induced fundamental set $U$ of the empty set on $G$. 
In the row corresponding to (the supporting hyperplane of) the fundamental set $U$, 
the entries corresponding to the edges joining $v$ on $H$ is $1$ and the other entries are all $0$. 
Thus, we can regard a fundamental set $U$ on $G$ as a regular vertex on $H$. 
Therefore, we can see that $w_{\rho(e)}$'s for $e\in E(H)\cap E(G')$ can be transformed into unit vectors by induction. 
\end{proof}

\bigskip

\section{Toric rings whose class groups are rank $1$ or $2$}\label{sec:small}

In this section, we provide a characterization of posets or graphs 
whose associated toric rings have their class groups $\ZZ$ or $\ZZ^2$. 

\subsection{Hibi rings with small class groups}
We define four posets as follows.
\begin{itemize}
\item[(i)] For $s_1,s_2 \in \ZZ_{>0}$, 
let $\Pi_1(s_1,s_2)=\{p_1,\ldots,p_{s_1},p_{s_1+1},\ldots,p_{s_1+s_2}\}$ be the poset equipped with the partial orders 
$p_1 \prec \cdots \prec p_{s_1}$ and $p_{s_1+1} \prec \cdots \prec p_{s_1+s_2}$. Figure~\ref{poset1} shows the Hasse diagram of $\Pi_1(s_1,s_2)$. 
\item[(ii)] 
For $s_1, s_2, s_3 \in \ZZ_{>0}$ and $t \in \ZZ_{\geq 0}$, 
let $\Pi_2(s_1,s_2,s_3,t)=\{p_1,\ldots,p_d\}$ $(d=s_1+s_2+s_3+t)$ be the poset equipped with the partial orders
\begin{itemize} 
\item $p_1 \prec \cdots \prec p_t$, 
\item $p_t \prec p_{t+1} \prec \cdots \prec p_{t+s_1}$ and $p_t \prec p_{t+s_1+1} \prec \cdots \prec p_{t+s_1+s_2}$ 
($p_1 \prec \cdots \prec p_{s_1}$ and $p_{s_1+1} \prec \cdots \prec p_{s_1+s_2}$ if $t=0$) and 
\item $p_{t+s_1+s_2+1} \prec \cdots \prec p_d$. 
\end{itemize}
Figure~\ref{poset2} shows the Hasse diagram of $\Pi_2(s_1,s_2,s_3,t)$ and Figure \ref{poset20} is the case $t=0$. 
\item[(iii)] 
Moreover, for $s_1,s_2,t_1,t_2 \in \ZZ_{>0}$ and $t_3 \in \ZZ_{\geq 0}$, let 
$\Pi_3(s_1,s_2,t_1,t_2,t_3)=\{p_1,\ldots,p_d\}$ $(d=s_1+s_2+t_1+t_2+t_3)$ be the poset equipped with the partial orders
\begin{itemize}
\item $p_1 \prec \cdots \prec p_{t_1} \prec p_{t_1+1} \cdots \prec p_{t_1+s_1}$, 
\item $p_{t_1+s_1+1} \prec \cdots \prec p_{t_1+s_1+s_2} \prec p_{s_1+t_1+s_2+1} \cdots \prec p_{t_1+s_1+s_2+t_2}$ and 
\item $p_{t_1} \prec p_{t_1+s_1+s_2+t_2+1}\cdots \prec p_d \prec p_{t_1+s_1+s_2+1}$. 
\end{itemize}
Figure~\ref{poset3} shows the Hasse diagram of $\Pi_3(s_1,s_2,t_1,t_2,t_3)$. 
\item[(iv)] Furthermore, for $s_1,s_2,t_1,t_2 \in \ZZ_{>0}$,  let 
$\Pi_4(s_1,s_2,t_1,t_2)=\{p_1,\ldots,p_{d+1}\}$ $(d=s_1+s_2+t_1+t_2)$ be the poset equipped with the partial orders
\begin{itemize}
\item $p_1 \prec \cdots \prec p_{t_1} \prec p_{d+1}$, $p_{t_1+1} \prec \cdots \prec p_{t_1+t_2} \prec p_{d+1}$ and 
\item $p_{d+1} \prec p_{t_1+t_2+1} \prec \cdots \prec p_{t_1+t_2+s_1}$, $p_{d+1} \prec p_{t_1+t_2+s_1+1} \prec \cdots \prec p_d$.
\end{itemize}
Figure~\ref{poset4} shows the Hasse diagram of $\Pi_4(s_1,s_2,t_1,t_2)$. 
\end{itemize}

\begin{figure}[h]
{\scalebox{0.8}{
\begin{minipage}{1.0\columnwidth}
\centering
{\scalebox{0.8}{
\begin{tikzpicture}[line width=0.05cm]

\coordinate (N11) at (0,0); 
\coordinate (N12) at (0,1); 
\coordinate (N13) at (0,2); 
\coordinate (N15) at (0,4); 
\coordinate (N16) at (0,5);

\coordinate (N21) at (2,0); 
\coordinate (N22) at (2,1); 
\coordinate (N23) at (2,2); 
\coordinate (N25) at (2,4); 
\coordinate (N26) at (2,5);

\draw  (N11)--(N12); 
\draw  (N12)--(N13); 
\draw  (N13)--(0,2.5); 
\draw  (0,3.5)--(N15); 
\draw  (N15)--(N16); 
\draw[dotted]  (0,2.8)--(0,3.2);

\draw  (N21)--(N22);
\draw  (N22)--(N23); 
\draw  (N23)--(2,2.5);
\draw  (2,3.5)--(N25); 
\draw  (N25)--(N26); 
\draw[dotted]  (2,2.8)--(2,3.2);


\draw [line width=0.05cm, fill=white] (N11) circle [radius=0.15]; 
\draw [line width=0.05cm, fill=white] (N12) circle [radius=0.15]; 
\draw [line width=0.05cm, fill=white] (N13) circle [radius=0.15]; 
\draw [line width=0.05cm, fill=white] (N15) circle [radius=0.15]; 
\draw [line width=0.05cm, fill=white] (N16) circle [radius=0.15]; 
\draw [line width=0.05cm, fill=white] (N21) circle [radius=0.15]; 
\draw [line width=0.05cm, fill=white] (N22) circle [radius=0.15]; 
\draw [line width=0.05cm, fill=white] (N23) circle [radius=0.15]; 
\draw [line width=0.05cm, fill=white] (N25) circle [radius=0.15]; 
\draw [line width=0.05cm, fill=white] (N26) circle [radius=0.15]; 

\node at (4,0) {$$}; 
\draw [line width=0.015cm, decorate,decoration={brace,amplitude=10pt}](-0.25,0) -- (-0.25,5) 
node[midway,xshift=-0.8cm] {\Large $s_1$}; 
\draw [line width=0.015cm, decorate,decoration={brace,amplitude=10pt}](1.75,0) -- (1.75,5) 
node[black,midway,xshift=-0.8cm] {\Large $s_2$};

\end{tikzpicture}
} }
\caption{The poset $\Pi_1$}
\label{poset1}
\end{minipage} 
}}
{\scalebox{0.8}{
\begin{minipage}{0.46\columnwidth}
\centering
{\scalebox{0.7}{
\begin{tikzpicture}[line width=0.05cm]

\coordinate (N14) at (0,3); 
\coordinate (N15) at (0,4); 
\coordinate (N16) at (0,5); 
\coordinate (N17) at (0,6);

\coordinate (N24) at (2,3); 
\coordinate (N25) at (2,4); 
\coordinate (N26) at (2,5); 
\coordinate (N27) at (2,6);

\coordinate (N31) at (4,0); 
\coordinate (N32) at (4,1); 
\coordinate (N33) at (4,2); 
\coordinate (N35) at (4,4); 
\coordinate (N36) at (4,5); 
\coordinate (N37) at (4,6);

\coordinate (Nt1) at (1,0); 
\coordinate (Nt3) at (1,2); 


\draw  (Nt3)--(N14);
\draw  (N14)--(N15); 
\draw  (N15)--(0,4.5);
\draw[dotted]  (0,4.8)--(0,5.3);
\draw  (0,5.5)--(N17);

\draw  (Nt3)--(N24); 
\draw  (N24)--(N25);
\draw  (N25)--(2,4.5);
\draw[dotted]  (2,4.8)--(2,5.3); 
\draw  (2,5.5)--(N27);

\draw  (Nt1)--(1,0.5); 
\draw[dotted]  (1,0.8)--(1,1.3);
\draw  (1,1.5)--(Nt3);

\draw  (N31)--(N32); 
\draw  (N32)--(N33); 
\draw  (N33)--(4,2.5); 
\draw  (4,3.5)--(N35); 
\draw  (N35)--(N36); 
\draw  (N36)--(N37);
\draw[dotted]  (4,2.8)--(4,3.2);

\draw [line width=0.05cm, fill=white] (Nt1) circle [radius=0.15];
\draw [line width=0.05cm, fill=white] (Nt3) circle [radius=0.15];

\draw [line width=0.05cm, fill=white] (N14) circle [radius=0.15]; 
\draw [line width=0.05cm, fill=white] (N15) circle [radius=0.15];
\draw [line width=0.05cm, fill=white] (N17) circle [radius=0.15];

\draw [line width=0.05cm, fill=white] (N24) circle [radius=0.15]; 
\draw [line width=0.05cm, fill=white] (N25) circle [radius=0.15];
\draw [line width=0.05cm, fill=white] (N27) circle [radius=0.15];

\draw [line width=0.05cm, fill=white] (N31) circle [radius=0.15];
\draw [line width=0.05cm, fill=white] (N32) circle [radius=0.15]; 
\draw [line width=0.05cm, fill=white] (N33) circle [radius=0.15]; 
\draw [line width=0.05cm, fill=white] (N35) circle [radius=0.15]; 
\draw [line width=0.05cm, fill=white] (N36) circle [radius=0.15]; 
\draw [line width=0.05cm, fill=white] (N37) circle [radius=0.15];

\node at (6,0) {$$}; 
\draw [line width=0.015cm, decorate,decoration={brace,amplitude=10pt}](-0.25,3) -- (-0.25,6) 
node[midway,xshift=-0.8cm] {\Large $s_1$}; 
\draw [line width=0.015cm, decorate,decoration={brace,amplitude=10pt}](1.75,3) -- (1.75,6) node[midway,xshift=-0.8cm] {\Large $s_2$};
\draw [line width=0.015cm, decorate,decoration={brace,amplitude=10pt}](3.75,0) -- (3.75,6) node[black,midway,xshift=-0.8cm] {\Large $s_3$}; 
\draw [line width=0.015cm, decorate,decoration={brace,amplitude=10pt}](0.75,0) -- (0.75,2) node[black,midway,xshift=-0.8cm] {\Large $t$};

\end{tikzpicture} }}
\caption{The poset $\Pi_2$}
\label{poset2}
\end{minipage}
\begin{minipage}{0.53\columnwidth}
\centering
{\scalebox{0.8}{
\begin{tikzpicture}[line width=0.05cm]

\coordinate (N11) at (0,0); 
\coordinate (N12) at (0,1); 
\coordinate (N13) at (0,2); 
\coordinate (N15) at (0,4); 
\coordinate (N16) at (0,5);

\coordinate (N21) at (2,0); 
\coordinate (N22) at (2,1); 
\coordinate (N23) at (2,2); 
\coordinate (N25) at (2,4); 
\coordinate (N26) at (2,5);

\coordinate (N31) at (4,0); 
\coordinate (N32) at (4,1); 
\coordinate (N33) at (4,2); 
\coordinate (N35) at (4,4); 
\coordinate (N36) at (4,5);

\draw  (N11)--(N12); 
\draw  (N12)--(N13); 
\draw  (N13)--(0,2.5); 
\draw  (0,3.5)--(N15); 
\draw  (N15)--(N16); 
\draw[dotted]  (0,2.8)--(0,3.2);

\draw  (N21)--(N22);
\draw  (N22)--(N23); 
\draw  (N23)--(2,2.5);
\draw  (2,3.5)--(N25); 
\draw  (N25)--(N26); 
\draw[dotted]  (2,2.8)--(2,3.2);

\draw  (N31)--(N32); 
\draw  (N32)--(N33); 
\draw  (N33)--(4,2.5); 
\draw  (4,3.5)--(N35); 
\draw  (N35)--(N36); 
\draw[dotted]  (4,2.8)--(4,3.2);


\draw [line width=0.05cm, fill=white] (N11) circle [radius=0.15];
\draw [line width=0.05cm, fill=white] (N12) circle [radius=0.15]; 
\draw [line width=0.05cm, fill=white] (N13) circle [radius=0.15]; 
\draw [line width=0.05cm, fill=white] (N15) circle [radius=0.15]; 
\draw [line width=0.05cm, fill=white] (N16) circle [radius=0.15];

\draw [line width=0.05cm, fill=white] (N21) circle [radius=0.15];
\draw [line width=0.05cm, fill=white] (N22) circle [radius=0.15]; 
\draw [line width=0.05cm, fill=white] (N23) circle [radius=0.15]; 
\draw [line width=0.05cm, fill=white] (N25) circle [radius=0.15]; 
\draw [line width=0.05cm, fill=white] (N26) circle [radius=0.15];

\draw [line width=0.05cm, fill=white] (N31) circle [radius=0.15];
\draw [line width=0.05cm, fill=white] (N32) circle [radius=0.15]; 
\draw [line width=0.05cm, fill=white] (N33) circle [radius=0.15]; 
\draw [line width=0.05cm, fill=white] (N35) circle [radius=0.15]; 
\draw [line width=0.05cm, fill=white] (N36) circle [radius=0.15];

\node at (5.,0) {$$}; 

\draw [line width=0.015cm, decorate,decoration={brace,amplitude=10pt}](-0.25,0) -- (-0.25,5) 
node[midway,xshift=-0.8cm] {\Large $s_1$}; 
\draw [line width=0.015cm, decorate,decoration={brace,amplitude=10pt}](1.75,0) -- (1.75,5) node[black,midway,xshift=-0.8cm] {\Large $s_2$};
\draw [line width=0.015cm, decorate,decoration={brace,amplitude=10pt}](3.75,0) -- (3.75,5) node[black,midway,xshift=-0.8cm] {\Large $s_3$};

\end{tikzpicture}
} }
\caption{\; \\ The poset $\Pi_2$ with $t=0$}
\label{poset20}
\end{minipage}
}}
{\scalebox{0.8}{
\begin{minipage}{0.46\columnwidth}
\centering
{\scalebox{0.7}{
\begin{tikzpicture}[line width=0.05cm]

\coordinate (N11) at (0,0); 
\coordinate (N12) at (0,1); 
\coordinate (N13) at (0,2); 
\coordinate (N14) at (0,3);
\coordinate (N15) at (0,4); 
\coordinate (N16) at (0,5); 
\coordinate (N17) at (0,6);

\coordinate (N21) at (4,0); 
\coordinate (N22) at (4,1); 
\coordinate (N23) at (4,2); 
\coordinate (N24) at (4,3);
\coordinate (N25) at (4,4); 
\coordinate (N26) at (4,5); 
\coordinate (N27) at (4,6); 

\coordinate (Ntt1) at (1,2.5); 
\coordinate (Ntt2) at (3,3.5); 
\coordinate (Ntt3) at (4,2);

\draw  (N11)--(0,0.5); 
\draw  (0,1.5)--(N13); 
\draw  (N13)--(N14); 
\draw  (N14)--(0,3.5);
\draw  (0,4.5)--(N16);
\draw  (N16)--(N17);
\draw[dotted]  (0,0.8)--(0,1.2);
\draw[dotted]  (0,3.8)--(0,4.2);

\draw  (N21)--(N22);
\draw  (N22)--(4,1.5); 
\draw  (4,2.5)--(N24);
\draw  (N24)--(N25); 
\draw  (N25)--(4,4.5); 
\draw  (4,5.5)--(N27);
\draw[dotted]  (4,1.8)--(4,2.2);
\draw[dotted]  (4,4.8)--(4,5.2);

\draw  (N13)--(Ntt1); 
\draw  (Ntt2)--(N25); 
\draw  (Ntt1)--(1.5,2.75);
\draw  (2.5,3.25)--(Ntt2);
\draw[dotted]  (1.8,2.9)--(2.2,3.1);


\draw [line width=0.05cm, fill=white] (N11) circle [radius=0.15];
\draw [line width=0.05cm, fill=white] (N13) circle [radius=0.15];
\draw [line width=0.05cm, fill=white] (N14) circle [radius=0.15]; 
\draw [line width=0.05cm, fill=white] (N16) circle [radius=0.15];
\draw [line width=0.05cm, fill=white] (N17) circle [radius=0.15];

\draw [line width=0.05cm, fill=white] (N21) circle [radius=0.15];
\draw [line width=0.05cm, fill=white] (N22) circle [radius=0.15]; 
\draw [line width=0.05cm, fill=white] (N24) circle [radius=0.15];
\draw [line width=0.05cm, fill=white] (N25) circle [radius=0.15];
\draw [line width=0.05cm, fill=white] (N27) circle [radius=0.15];

\draw [line width=0.05cm, fill=white] (Ntt1) circle [radius=0.15] ; 
\draw [line width=0.05cm, fill=white] (Ntt2) circle [radius=0.15] ; 

\node at (4.5,0) {$$}; 
\draw [line width=0.015cm, decorate,decoration={brace,amplitude=10pt,mirror}](1.13,2.25) -- (3.13,3.25) 
node[midway,xshift=0.4cm,yshift=-0.6cm] {\Large $t_3$}; 

\draw [line width=0.015cm, decorate,decoration={brace,amplitude=10pt,mirror}](0.25,0) -- (0.25,2) node[black,midway,xshift=0.8cm] {\Large $t_1$};
\draw [line width=0.015cm, decorate,decoration={brace,amplitude=10pt,mirror}](0.25,3) -- (0.25,6) node[black,midway,xshift=0.8cm] {\Large $s_1$};


\draw [line width=0.015cm, decorate,decoration={brace,amplitude=10pt}](3.75,0) -- (3.75,3) 
node[midway,xshift=-0.8cm] {\Large $s_2$}; 
\draw [line width=0.015cm, decorate,decoration={brace,amplitude=10pt}](3.75,4) -- (3.75,6) 
node[midway,xshift=-0.8cm] {\Large $t_2$}; 

\end{tikzpicture} }}
\caption{The poset $\Pi_3$}
\label{poset3}
\end{minipage}
\begin{minipage}{0.49\columnwidth}
\centering
{\scalebox{0.7}{
\begin{tikzpicture}[line width=0.05cm]

\coordinate (N14) at (0,3); 
\coordinate (N15) at (0,4); 
\coordinate (N16) at (0,5); 
\coordinate (N17) at (0,6);

\coordinate (N24) at (2,3); 
\coordinate (N25) at (2,4); 
\coordinate (N26) at (2,5); 
\coordinate (N27) at (2,6);

\coordinate (Nt1) at (0,0); 
\coordinate (Nt2) at (0,1); 
\coordinate (Nt3) at (0,2); 

\coordinate (Nt5) at (2,0); 
\coordinate (Nt6) at (2,1); 
\coordinate (Nt7) at (2,2);

\coordinate (Ntt1) at (1,3); 


\draw (Nt1)--(0,0.5);
\draw[dotted] (0,0.8)--(0,1.2);
\draw (0,1.5)--(Nt3);

\draw (Nt5)--(2,0.5);
\draw[dotted] (2,0.8)--(2,1.2);
\draw (2,1.5)--(Nt7);

\draw (Nt3)--(Ntt1);
\draw (Nt7)--(Ntt1); 
\draw (Ntt1)--(N15);
\draw (Ntt1)--(N25);

\draw (N15)--(0,4.5);
\draw[dotted] (0,4.8)--(0,5.2);
\draw (0,5.5)--(N17);

\draw (N25)--(2,4.5);
\draw[dotted] (2,4.8)--(2,5.2);
\draw (2,5.5)--(N27);

\draw [line width=0.05cm, fill=white] (Ntt1) circle [radius=0.15];

\draw [line width=0.05cm, fill=white] (N15) circle [radius=0.15];
\draw [line width=0.05cm, fill=white] (N17) circle [radius=0.15];

\draw [line width=0.05cm, fill=white] (N25) circle [radius=0.15];
\draw [line width=0.05cm, fill=white] (N27) circle [radius=0.15];

\draw [line width=0.05cm, fill=white] (Nt1) circle [radius=0.15];
\draw [line width=0.05cm, fill=white] (Nt3) circle [radius=0.15]; 
\draw [line width=0.05cm, fill=white] (Nt5) circle [radius=0.15]; 
\draw [line width=0.05cm, fill=white] (Ntt1) circle [radius=0.15]; 
\draw [line width=0.05cm, fill=white] (Nt7) circle [radius=0.15];

\node at (3.5,0) {$$}; 
\draw [line width=0.015cm, decorate,decoration={brace,amplitude=10pt}](-0.25,4) -- (-0.25,6) 
node[midway,xshift=-0.8cm] {\Large $s_1$}; 
\draw [line width=0.015cm, decorate,decoration={brace,amplitude=10pt}](1.75,4) -- (1.75,6) node[midway,xshift=-0.8cm] {\Large $s_2$};
\draw [line width=0.015cm, decorate,decoration={brace,amplitude=10pt}](-0.25,0) -- (-0.25,2) node[black,midway,xshift=-0.8cm] {\Large $t_1$}; 
\draw [line width=0.015cm, decorate,decoration={brace,amplitude=10pt}](1.75,0) -- (1.75,2) node[black,midway,xshift=-0.8cm] {\Large $t_2$};

\end{tikzpicture} 
}}
\caption{The poset $\Pi_4$}
\label{poset4}
\end{minipage}
}}
\end{figure}


In \cite{N}, Gorenstein Hibi rings $\kk[\Pi]$ with $\Cl(\kk[\Pi]) \cong \ZZ$ or $\ZZ^2$ are discussed 
and the characterization of the associated posets is given. 
Note that $\kk[\Pi]$ is Gorenstein if and only if $\Pi$ is pure, i.e., all of the maximal chains in $\Pi$ have the same length (\cite{H87}). 
We can see that \cite[Example 3.1]{N} and the proof of \cite[Lemma 3.2]{N} works even for non-pure posets. 
Thus, we can characterize the Hibi rings $\kk[\Pi]$ with $\Cl(\kk[\Pi]) \cong \ZZ$ or $\ZZ^2$ as follows: 
\begin{prop}[{cf. \cite[Example 3.1 and Lemma 3.2]{N}}]\label{char:order}
Let $\Pi$ be a poset. Assume that $\kk[\Pi]$ is not a polynomial extension of a Hibi ring. 
\begin{itemize}
\item[(1)] If $\Cl(\kk[\Pi]) \cong \ZZ$, then $\calO_\Pi$ is isomorphic to $\calO_{\Pi_1(s_1,s_2)}$ for some $s_1,s_2$ with $d=s_1+s_2$. 
\item[(2)] If $\Cl(\kk[\Pi]) \cong \ZZ^2$, then $\calO_\Pi$ is isomorphic to $\calO_{\Pi_2(s_1,s_2,s_3,t)}$ for some $s_1,s_2,s_3,t$ with $d=s_1+s_2+s_3+t$, 
$\calO_{\Pi_3(s_1,s_2,t_1,t_2,t_3)}$ for some $s_1,s_2,t_1,t_2,t_3$ with $d=s_1+s_2+t_1+t_2+t_3$ or 
$\calO_{\Pi_4(s_1,s_2,t_1,t_2)}$ for some $s_1,s_2,t_1,t_2$ with $d=s_1+s_2+t_1+t_2$. 
\end{itemize}
\end{prop}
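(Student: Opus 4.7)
The plan is to apply formula \eqref{eq:Hibi}, $\Cl(\kk[\Pi]) \cong \ZZ^{n - d - 1}$, where $n$ is the number of edges of the Hasse diagram $\mathcal{H}$ of $\widehat{\Pi}$ and $d = |\Pi|$. Since $\mathcal{H}$ is connected on $d + 2$ vertices, $n - (d + 1)$ equals its first Betti number; so part (1) amounts to classifying $\Pi$ whose $\mathcal{H}$ has cycle rank $1$, and part (2) cycle rank $2$. I would also establish that $\calO_\Pi$ is a lattice pyramid (equivalently, $\kk[\Pi]$ is a polynomial extension of a smaller Hibi ring) if and only if $\mathcal{H}$ has a bridge: comparing the facet description of $\calO_\Pi$ (one facet per Hasse edge of $\widehat\Pi$) with its vertex description (one vertex per poset ideal of $\widehat\Pi$), a pyramid apex corresponds to a poset ideal whose boundary in $\mathcal{H}$ consists of exactly one edge, which is a bridge. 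Under the hypothesis we may therefore assume $\mathcal{H}$ is $2$-edge-connected.

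For (1), a connected bridgeless graph of cycle rank $1$ is a single cycle. Hence $\mathcal{H}$ is a cycle through $\hat{0}$ and $\hat{1}$; the two arcs between them are saturated chains of $\widehat{\Pi}$, and $\widehat{\Pi}$ consists exactly of these two chains sharing only $\hat{0}$ and $\hat{1}$. This is $\widehat{\Pi_1(s_1, s_2)}$ for the appropriate internal lengths $s_1, s_2$.

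For (2), a connected bridgeless graph of cycle rank $2$ is either a theta graph (two vertices joined by three internally disjoint paths) or a figure-eight (two cycles sharing a single vertex). I would analyze each topological type together with the admissible positions of $\hat{0}$ and $\hat{1}$, which, being the unique global minimum and maximum of $\widehat{\Pi}$, must lie at vertices where all incident Hasse edges point consistently upward (respectively downward) in the poset order. The theta case splits according to whether $\hat{0}, \hat{1}$ occupy both degree-$3$ vertices, lie interior to distinct paths, or mix these roles; the degree and orientation constraints force the resulting posets to be precisely $\Pi_2(s_1, s_2, s_3, t)$ (for $t = 0$ or $t > 0$) and $\Pi_3(s_1, s_2, t_1, t_2, t_3)$, as matched against Figures \ref{poset2}, \ref{poset20}, and \ref{poset3}. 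The figure-eight case, after verifying that the degree-$4$ vertex can only be oriented so that each of its two cycles goes ``up then down'' through it, produces $\Pi_4(s_1, s_2, t_1, t_2)$ as in Figure~\ref{poset4}. The principal obstacle is this exhaustive case analysis: one must verify that every admissible placement of $\hat{0}$ and $\hat{1}$ on $\mathcal{H}$ yields a poset whose order polytope is unimodularly equivalent to one of $\calO_{\Pi_2}, \calO_{\Pi_3}, \calO_{\Pi_4}$, with no configuration omitted and no orientation inconsistency admitted. The authors indicate that this analysis is essentially carried out in \cite[Example 3.1 and Lemma 3.2]{N}, so the remaining technical point is checking that Nishida's argument nowhere uses purity of $\Pi$ and therefore extends verbatim to the general setting.
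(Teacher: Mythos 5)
Your overall framework is sound and is, as far as one can tell, essentially the argument the paper is leaning on: the paper itself offers no self-contained proof, only the assertion that \cite[Example 3.1]{N} and the proof of \cite[Lemma 3.2]{N} go through for non-pure posets, and your reduction---class-group rank equals the first Betti number of the Hasse diagram of $\widehat{\Pi}$ by \eqref{eq:Hibi}, polynomial extension corresponds to a lattice pyramid corresponds to a bridge (your apex/edge-cut argument is correct: a vertex of $\calO_\Pi$ lies off exactly the facets indexed by Hasse edges cut by the corresponding ideal of $\widehat{\Pi}$, and a one-edge cut is a bridge, whose deletion components are a down-set and an up-set), then classify bridgeless connected graphs of Betti number $1$ (a cycle) and $2$ (theta or figure-eight)---is the right skeleton, and part (1) is complete as you sketch it.

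There is, however, a genuine omission in your case list for (2). In the theta case you enumerate ``both degree-$3$ vertices, interior to distinct paths, or mixed,'' but you omit the configuration in which $\hat{0}$ and $\hat{1}$ are both interior to the \emph{same} path. This configuration is realizable: orienting forces the other two paths to run in parallel from one degree-$3$ vertex $a$ up to the other $b$, so $\Pi$ is a ``diamond with tails'' (a chain into $a$, two parallel chains from $a$ to $b$, a chain out of $b$) disjoint union a chain. This poset is neither isomorphic nor anti-isomorphic to any $\Pi_2$, $\Pi_3$, $\Pi_4$, so your plan of ``matching against the figures'' breaks down exactly here; the conclusion of the proposition is saved only because the statement is about unimodular equivalence of polytopes, not poset isomorphism. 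One correct repair: both the diamond-with-tails poset and $\Pi_2(s_1,s_2,s_3,t)$ contain no ``X-shape'' subposet, so by Theorem~\ref{X} their order polytopes are unimodularly equivalent to their chain polytopes; and the chain polytope depends only on the hypergraph of maximal chains, which in both cases consists of two chains of sizes $t+s_1$ and $t+s_2$ meeting in $t$ elements (take $t$ to be the total length of the two tails, $s_1,s_2$ the lengths of the parallel chains, $s_3$ the disjoint chain). Two smaller instances of the same issue: the mixed theta case produces $\Pi_2(s_1,s_2,s_3,t)^{\mathrm{op}}$ rather than $\Pi_2(s_1,s_2,s_3,t)$, so you need the (easy) remark that $\calO_{\Pi^{\mathrm{op}}}\cong\calO_\Pi$ via $x\mapsto \mathbf{1}-x$; and your figure-eight claim is misstated---if both cycles went ``up then down'' through the degree-$4$ vertex it would have out-degree $0$ while not being $\hat{1}$, which is impossible; the correct constraint (after checking $\hat{0},\hat{1}$ cannot lie on one cycle, nor at the degree-$4$ vertex) is that one cycle has the degree-$4$ vertex as its local maximum and the other as its local minimum, which does yield $\Pi_4$ as you conclude.
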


Given a poset $\Pi$, we define the \textit{comparability graph} of $\Pi$, denoted by $G(\Pi)$, 
as a graph on the vertex set $V(G(\Pi))=[d]$ with the edge set 
$$E(G(\Pi))=\{\{i,j\} : \text{$p_i$ and $p_j$ are comparable in $\Pi$}\}.$$ 
It is known that $G(\Pi)$ is perfect for any $\Pi$ (see e.g. \cite[Section 5.5]{Die}). 
Moreover, we see that $\calC_\Pi=\Stab_{G(\Pi)}$. 
\begin{prop}\label{prop:compare}
Let $\Pi$ be $\Pi_1(s_1,s_2)$ or $\Pi_2(s_1,s_2,s_3,t)$ or $\Pi_3(s_1,s_2,t_1,t_2,t_3)$. 
Then $\calO_\Pi$ is unimodularly equivalent to $\calC_\Pi=\Stab_{G(\Pi)}$. 
\end{prop}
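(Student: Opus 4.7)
The plan is to reduce the statement to Theorem~\ref{X}, which characterizes unimodular equivalence of $\calO_\Pi$ and $\calC_\Pi$ by the absence of an X-shape subposet in $\Pi$. Since the antichains of $\Pi$ coincide with the independent sets of the comparability graph $G(\Pi)$, the identity $\calC_\Pi = \Stab_{G(\Pi)}$ recorded just before the proposition is automatic. Hence the task reduces entirely to verifying that none of $\Pi_1(s_1,s_2)$, $\Pi_2(s_1,s_2,s_3,t)$, $\Pi_3(s_1,s_2,t_1,t_2,t_3)$ contains the X-shape $\{z_1,z_2,z_3,z_4,z_5\}$ with $z_1,z_2\prec z_3\prec z_4,z_5$, $z_1\not\sim z_2$, $z_4\not\sim z_5$.

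I would check each family by looking for candidate centers $z_3$ whose down-set and up-set both contain an incomparable pair. For $\Pi_1(s_1,s_2)$ the Hasse diagram is a disjoint union of two chains, so no element has a pair of incomparable elements on either side; thus $\Pi_1$ is X-free.

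For $\Pi_2(s_1,s_2,s_3,t)$ (Figures~\ref{poset2} and~\ref{poset20}), the only branching vertex is $p_t$ (or, when $t=0$, the bottom of the branches), which has two incomparable elements above but only a single chain below. Every other element lies on one of the two branches (pure chain above and pure chain below the branch point) or on the disconnected chain $p_{t+s_1+s_2+1}\prec\cdots\prec p_d$. In particular, no element simultaneously has incomparable pairs on both sides, so $\Pi_2$ is X-free.

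For $\Pi_3(s_1,s_2,t_1,t_2,t_3)$ (Figure~\ref{poset3}) there are exactly two branching vertices: the upward branch $p_{t_1}$, which has incomparable pairs above but only a chain below, and the downward merge $p_{t_1+s_1+s_2+1}$, which has incomparable pairs below but only a chain above. Every other element lies on a chain segment in which at least one of the two sides is totally ordered. Thus $\Pi_3$ is X-free as well. Applying Theorem~\ref{X} to each case yields the unimodular equivalence $\calO_\Pi \cong \calC_\Pi = \Stab_{G(\Pi)}$. The only real care needed is the enumeration of branch points in each Hasse diagram, along with the degenerate parameter values (e.g.\ $t=0$ in $\Pi_2$ and $t_3=0$ in $\Pi_3$); I expect no substantive obstacle beyond this bookkeeping.
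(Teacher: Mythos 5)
Your proposal is correct and follows exactly the paper's route: the paper's proof of Proposition~\ref{prop:compare} is the one-line observation that it ``directly follows from Theorem~\ref{X}'' (with $\calC_\Pi=\Stab_{G(\Pi)}$ already recorded just before the statement), and your case-by-case verification that $\Pi_1$, $\Pi_2$, $\Pi_3$ are X-free --- via the correct criterion that an X-shape requires an element with an incomparable pair both strictly below and strictly above it --- simply makes explicit the bookkeeping the paper leaves to the reader.
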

\begin{proof}
This directly follows from Theorem~\ref{X}. 
\end{proof}

\bigskip

\subsection{Stable set rings with small class groups}

For stable set rings, if their class groups are isomorphic $\ZZ$ or $\ZZ^2$, 
then we see that we can associate Hibi rings as follows: 
\begin{thm}\label{thm:stab}
Let $G$ be a perfect graph. 
\begin{itemize}
\item[(1)] Assume that $\Cl(\kk[\Stab_G]) \cong \ZZ$. 
Then $\Stab_G$ is unimodularly equivalent to $\calO_{\Pi_1(s_1,s_2)}$ for some $s_1,s_2 \in \ZZ_{>0}$. 
\item[(2)] Assume that $\Cl(\kk[\Stab_G]) \cong \ZZ^2$.
Then $\Stab_G$ is unimodularly equivalent to $\calO_{\Pi_2(s_1,s_2,s_3,t)}$ for some $s_1,s_2,s_3 \in \ZZ_{>0}$ and $t \in \ZZ_{\geq 0}$, 
or $\calO_{\Pi_3(s_1,s_2,t_1,t_2,t_3)}$ for some $s_1,s_2 \in \ZZ_{>0}$ and $t_1,t_2,t_3 \in \ZZ_{\geq 0}$, 
\end{itemize}
\end{thm}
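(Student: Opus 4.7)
The strategy is to translate the class-group hypothesis into a restriction on the number of maximal cliques of $G$ via Proposition~\ref{prop:class_stab}, then carry out a structural analysis of how these cliques overlap, and finally identify the resulting graph with the comparability graph of the appropriate poset, invoking Proposition~\ref{prop:compare} to obtain the required unimodular equivalence between $\Stab_G=\calC_\Pi$ and $\calO_\Pi$.

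\emph{Setup and reduction.} By Proposition~\ref{prop:class_stab}, the hypothesis forces $G$ to have exactly two (resp.\ three) maximal cliques $Q_0,Q_1$ (resp.\ $Q_0,Q_1,Q_2$). Partition $V(G)=\bigsqcup_{\emptyset\ne I}V_I$ with $V_I=\{v : v\in Q_i\iff i\in I\}$. Since every edge of $G$ lies in some maximal clique, the isomorphism type of $G$ is entirely determined by the sizes $|V_I|$: two vertices are adjacent iff their index sets share an element. Any vertex $v\in V_{\{0,\ldots,n\}}$ is universal in $G$, so occurs only in the stable set $\{v\}$; hence $\Stab_G=\Pyr(\Stab_{G\setminus v})$. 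Since $\Pyr(\calO_\Pi)=\calO_{\Pi\cup\{\hat{0}\}}$ (adjoining a new minimum), these pyramid apexes can be absorbed into the ``stem'' parameters $t,t_i$ of $\Pi_2,\Pi_3$, so I may assume $V_{\{0,\ldots,n\}}=\emptyset$ in the analysis below.

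\emph{Case analysis.} For (1), after reduction $Q_0\cap Q_1=\emptyset$, so $G=K_{|Q_0|}\sqcup K_{|Q_1|}=G(\Pi_1(|Q_0|,|Q_1|))$. For (2), the key combinatorial observation is that not all three pairwise intersections $V_{\{0,1\}},V_{\{0,2\}},V_{\{1,2\}}$ can be nonempty: picking one representative from each yields a triangle (each pair lies in some $Q_i$), which would extend to a fourth maximal clique---contradicting the count of three. Hence three subcases arise:
\begin{itemize}
\item all $V_{\{i,j\}}$ empty: $G$ is a disjoint union of three cliques, matching $G(\Pi_2(\cdot,\cdot,\cdot,0))$;
\item exactly one $V_{\{i,j\}}$ nonempty (WLOG $V_{\{0,1\}}$): $G$ is a ``double-clique'' $\sqcup$ single clique, matching $G(\Pi_2)$ with $t\ge 1$ (via the pyramid apexes in $V_{\{0,1\}}$);
\item two $V_{\{i,j\}}$ nonempty (WLOG $V_{\{0,2\}},V_{\{1,2\}}$): $G$ is an ``H-shape'' matching $G(\Pi_3(s_1,s_2,t_1,t_2,t_3))$ with $s_1=|V_{\{0\}}|$, $s_2=|V_{\{1\}}|$, $t_1=|V_{\{0,2\}}|$, $t_2=|V_{\{1,2\}}|$, $t_3=|V_{\{2\}}|$.
\end{itemize}
In each case, a direct inspection of the Hasse diagrams shows that the relevant $\Pi_1,\Pi_2,\Pi_3$ avoids the X-shape subposet, so Theorem~\ref{X} and Proposition~\ref{prop:compare} give $\Stab_G=\calC_\Pi\cong\calO_\Pi$ as required.

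\emph{Main obstacle.} The main technical subtlety is the pyramid-reduction bookkeeping: one must verify that after peeling off universal vertices, the parameters $|V_I|$ of the reduced graph map cleanly onto the allowed parameter ranges $s_i\in\ZZ_{>0}$, $t,t_i\in\ZZ_{\ge 0}$ of $\Pi_1,\Pi_2,\Pi_3$ (in particular, the pyramid apexes enlarge $t$ or a $t_i$, while the remaining $V_{\{i\}},V_{\{i,j\}}$ determine the other parameters). The triangle-extension argument in (2) is the linchpin of the case enumeration, and together with the X-shape check, it closes out the proof.
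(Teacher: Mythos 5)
Your main line of argument is the same as the paper's: Proposition~\ref{prop:class_stab} converts the hypothesis into the statement that $G$ has exactly two (resp.\ three) maximal cliques; the case analysis on the pairwise intersections reproduces the paper's cases (i)--(iii); your triangle argument excluding three nonempty pairwise intersections is an equivalent reformulation of the paper's construction of the fourth maximal clique containing $(Q_0\cap Q_1)\cup(Q_0\cap Q_2)\cup(Q_1\cap Q_2)$; and the conclusion via $G=G(\Pi)$, $\Stab_G=\calC_\Pi$, Theorem~\ref{X} and Proposition~\ref{prop:compare} is exactly the paper's.

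The genuine gap is your treatment of universal vertices. You claim the pyramid apexes ``can be absorbed into the stem parameters $t,t_i$ of $\Pi_2,\Pi_3$.'' This is false: since $\Pyr(\calO_\Pi)$ is unimodularly the order polytope of $\Pi$ with a new \emph{global} minimum adjoined, re-attaching a universal vertex adds a poset element comparable to \emph{everything}, whereas the stem elements of $\Pi_2$ are comparable only to the two branches $s_1,s_2$ (not to the disjoint $s_3$-chain), and the $t_1,t_2,t_3$-elements of $\Pi_3$ are likewise not comparable to all elements; the resulting poset leaves the three families, and no re-parametrization repairs this. Concretely, let $G$ be two triangles glued at a vertex $v$ (perfect, two maximal cliques, so $\Cl(\kk[\Stab_G])\cong\ZZ$): then $\Stab_G=\Pyr(\Stab_{K_2\sqcup K_2})=\Pyr(\Delta^2\times\Delta^2)$, while every $\calO_{\Pi_1(s_1,s_2)}$ is a product $\Delta^{s_1}\times\Delta^{s_2}$ of positive-dimensional simplices, and a nontrivial product is never a pyramid (in $P\times Q$ each vertex misses at least one facet coming from each factor, so no vertex lies on all facets but one); hence $\Stab_G$ is not unimodularly equivalent to any $\calO_{\Pi_1}$ and your case (1) with absorption fails. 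The paper does not attempt any absorption: its reduction $\kk[\Stab_G]\cong\kk[\Stab_{G\setminus v}][x_v]$ implicitly reads the theorem modulo polynomial extensions, in line with the explicit hypothesis ``not a polynomial extension'' in Proposition~\ref{char:order}, and under that reading you should simply discard universal vertices rather than reinstate them. A minor related slip: in your second subcase the vertices of $V_{\{0,1\}}$ are not pyramid apexes at all --- they are not universal, being non-adjacent to $Q_2$ --- they directly constitute the stem $t$ of $\Pi_2$, as in the paper's case (ii).
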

\begin{proof}
Let $Q_0,Q_1,\ldots,Q_n$ be the maximal cliques of $G$. Then $\Cl(\kk[\Stab_G]) \cong \ZZ^n$ by Proposition~\ref{prop:class_stab}. 
If $v\in \bigcap_{i=0}^n Q_i \ne \emptyset$, then $v$ is adjacent to any vertex in $G$, so we see that $\Stab_G=\Pyr(\Stab_{G \setminus v})$. 
In particular, $\kk[\Stab_G]\cong \kk[\Stab_{G\setminus v}][x_v]$. Thus, we may assume that $\bigcap_{i=0}^n Q_i = \emptyset$. 

Let $n=1$. 
We can see that $G=G(\Pi_1(s_1,s_2))$, where $s_1=|Q_0|$ and $s_2=|Q_1|$, 
by observing \eqref{facets:stab} for $G(\Pi_1(s_1,s_2))$ and the definition of $\calC_{\Pi_1(s_1,s_2)}$. 
It follows from Theorem~\ref{X} that $\kk[\calO(\Pi_1(s_1,s_2))] \cong \kk[\calC(\Pi_1(s_1,s_2))]=\kk[\Stab(G(\Pi_1(s_1,s_2)))]$.

Let $n=2$. 
\begin{itemize}
\item[(i)] If $Q_0\cap Q_1=Q_0\cap Q_2=Q_1\cap Q_2=\emptyset$, then we can see that $G=G(\Pi_2(s_1,s_2,s_3,0))$, 
where $s_1=|Q_0|$, $s_2=|Q_1|$ and $s_3=|Q_2|$. 
\item[(ii)] If $Q_0\cap Q_1=Q_0\cap Q_2 = \emptyset$ and $Q_1\cap Q_2 \ne \emptyset$, 
then we can see that $G=G(\Pi_2(s_1,s_2,s_3,t))$, where $s_1=|Q_1\setminus Q_2|$, $s_2=|Q_2\setminus Q_1|$, $s_3=|Q_0|$ and $t=|Q_1\cap Q_2|$.
\item[(iii)]  If $Q_0\cap Q_1, Q_0\cap Q_2 \ne \emptyset$ and $Q_1\cap Q_2=\emptyset$, 
then we can see that $G=G(\Pi_3(s_1,s_2,t_1,t_2,t_3))$, where $s_1=|Q_1\setminus Q_0|$, $s_2=|Q_2\setminus Q_0|$, 
$t_1=|Q_0\cap Q_1|$, $t_2=|Q_0\cap Q_2|$ and $t_3=|Q_0\setminus (Q_1\cup Q_2)|$.
\item[(iv)]  If $Q_0\cap Q_1, Q_0\cap Q_2, Q_1\cap Q_2\ne \emptyset$, 
then we see that $Q=(Q_0\cap Q_1)\cup (Q_0\cap Q_2)\cup (Q_1\cap Q_2)$ is also a maximal clique which is different from $Q_0,Q_1,Q_2$. 
This contradicts to $\Cl(\kk[\Stab_G])\cong \ZZ^2$ by Proposition~\ref{prop:class_stab}.
\end{itemize}
\end{proof}

\bigskip

\subsection{Edge rings with small class groups}
The goal of this subsection is to give a complete description of $G$ satisfying the odd cycle condition 
with $\Cl(\kk[G]) \cong \ZZ$ or $\ZZ^2$. 
Throughout this subsection, we let $G$ be a connected graph satisfying the odd cycle condition. 
We discuss $G$ by dividing it into whether $G$ is bipartite or not. 
\begin{prop}\label{rank}
Let $\Cl(\kk[G]) \cong \ZZ^t$. If $G$ contains at least two non-bipartite blocks, then $t\ge 4$.
\end{prop}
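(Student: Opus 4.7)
The plan is to apply Theorem~\ref{thm:class_edge}: since $G$ is connected and non-bipartite (it contains at least one non-bipartite block), $\dim\kk[G]=d$ and hence $t=|\Psi|-d$. The goal thus reduces to showing $|\Psi(G)|\geq d+4$. I would fix two non-bipartite blocks $B_1,B_2$, primitive odd cycles $C_i\subset B_i$ of length $2k_i+1$ (so $k_i\geq 1$), and the cut vertex $v_i\in V(B_i)$ on the unique $B_1$-$B_2$ path in $\Block(G)$, with the convention $v_1=v_2$ when $B_1,B_2$ share a vertex.

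For the fundamental-set count, Lemma~\ref{fundamental}(2) applied inside each $B_i$ supplies $2k_i+1$ distinct fundamental sets of $B_i$, one per edge of $C_i$. By Remark~\ref{remark} each of these lifts to a (possibly induced) fundamental set of $G$, and distinct fundamental sets within a single $B_i$ produce distinct lifts in $G$. Exploiting the non-uniqueness clause of Remark~\ref{remark}, one can choose the extensions so that the $B_1$-lifts and $B_2$-lifts coincide only on the image of the fundamental set $\{v_0\}$ in the shared-vertex case $v_1=v_2=v_0$. This yields $|\Psi_f(G)|\geq 2(k_1+k_2)+1$.

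For the regular-vertex count, every $v\in(V(C_1)\setminus\{v_1\})\cup(V(C_2)\setminus\{v_2\})$ is non-cut in $G$, since $B_i$ is $2$-connected and its only potential cut vertex of $G$ on the path to $B_{3-i}$ is $v_i$; then $G\setminus v$ remains connected and retains the odd cycle $C_{3-i}$, so $v$ is regular. This yields $|\Psi_r(G)|\geq 2k_1+2k_2$. Combining the two bounds gives $|\Psi|\geq 4(k_1+k_2)+1$. The minimum value of $d$ compatible with the hypothesis is $2(k_1+k_2)+1$, attained exactly when $G=C_1\cup_{v_0}C_2$ consists of two primitive odd cycles sharing one vertex; in that minimal case $|\Psi|-d\geq 2(k_1+k_2)\geq 4$. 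For larger $G$, each extra vertex is either regular (adding $+1$ to both $|\Psi_r|$ and $d$) or is a cut vertex that produces additional fundamental sets via Remark~\ref{remark} (again not decreasing $|\Psi|-d$), so the inequality $|\Psi|-d\geq 4$ propagates.

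The main obstacle is the careful bookkeeping behind the two monotonicity-type claims: producing extensions so that $B_1$-lifts and $B_2$-lifts share only the $\{v_0\}$-image, and verifying that enlargements of $G$ by additional vertices, bipartite blocks, or further non-bipartite blocks never drop $|\Psi|-d$ below the minimal-case count. Both steps require casework on whether $v_i\in V(C_i)$ and on the block structure of the path between $B_1$ and $B_2$. The extremal case of two triangles sharing a single vertex realizes $|\Psi|-d=4$ exactly, serving both as the base case of the argument and as confirmation that the bound $t\geq 4$ is tight.
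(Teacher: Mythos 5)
Your proposal assembles the right ingredients (the identity $t=|\Psi|-\dim\kk[G]$ from Theorem~\ref{thm:class_edge}, the $2k_i+1$ fundamental sets from Lemma~\ref{fundamental}(2), the count of regular vertices, Remark~\ref{remark}), but both steps you yourself flag as ``the main obstacle'' are genuine gaps, and the collision claim for lifted fundamental sets is in fact false as stated. Test it on two triangles $xab$ and $ycd$ joined by the bridge $\{x,y\}$: this satisfies the odd cycle condition and has two non-bipartite blocks, with $v_1=x\neq y=v_2$, so your claim is that all $(2k_1+1)+(2k_2+1)=6$ lifts can be chosen pairwise distinct. But the extension process of Remark~\ref{remark} is forced here: the only lift of $\{a\}$ is $\{a,y\}$, of $\{b\}$ is $\{b,y\}$, of $\{c\}$ is $\{c,x\}$, of $\{d\}$ is $\{d,x\}$, while every lift of $\{x\}$ is $\{x,c\}$ or $\{x,d\}$ and every lift of $\{y\}$ is $\{y,a\}$ or $\{y,b\}$ --- so the lifts of $\{x\}$ and $\{y\}$ unavoidably collide with the others, leaving only $4$ distinct sets. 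With $|\Psi_f|\geq 4$, $|\Psi_r|\geq 2k_1+2k_2=4$ and $d=6$, your arithmetic yields only $t\geq 2$ (even granting your unproven bound $|\Psi_f|\geq 2(k_1+k_2)+1$ it yields only $t\geq 3$). The proposition survives in this example because $\{a\},\{b\},\{c\},\{d\}$ are fundamental in $G$ directly --- the complementary component is the opposite triangle --- but such sets are invisible to your lifting construction, and your intuition about where collisions occur (only at $\{v_0\}$ in the shared-vertex case) is exactly inverted: in the shared-vertex case collisions are avoidable by varying the extensions, while in the bridge case they are forced.

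The second gap is the ``propagation'' paragraph, which is an assertion rather than an argument, and it is precisely where the work lies: under the odd cycle condition every cut vertex is non-regular (two components of $G\setminus v$ with odd cycles would violate the condition), and Remark~\ref{remark} does not say that a cut vertex ``produces additional fundamental sets'' in absolute terms --- it compares $|\Psi_f(G)|$ with $|\Psi_f(H)|$ for a subgraph $H$ obtained by pruning, which only has content inside an inductive framework. That is how the paper proceeds: induction on the number $m$ of blocks. For $m=2$ it splits on the position of the cut vertex $v$ relative to $C_1,C_2$ (the case $v\notin V(C_1)\cup V(C_2)$ being excluded by the odd cycle condition) and manufactures the extra fundamental sets not by lifting from both blocks but by applying Lemma~\ref{fundamental}(1) to the alternating independent sets $S_1=\{p_1,p_3,\dots\}$, $S_2=\{p_2,p_4,\dots\}$ of one cycle (and $U_1,U_2$ of the other), obtaining $(2k_1+1)+2$, resp.\ $(2k_1+1)+4$, fundamental sets with no collision analysis at all; for $m\geq 3$ it removes a leaf block $B_i$ with cut vertex $u$, sets $H=G\setminus(V(B_i)\setminus\{u\})$, and tracks the exact changes $|\Psi_r(G)|=|\Psi_r(H)|+(b-1)$ or $|\Psi_r(H)|+(b-1)-1$ together with $|\Psi_f(G)|\geq|\Psi_f(H)|+1$ according to whether $u$ is regular in $H$ and in $G$. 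To salvage your global count you would need a collision-free source of $2(k_1+k_2)+2$ fundamental sets in the non-adjacent-blocks case (such as the direct singleton-type sets above) plus a rigorous replacement for the propagation step --- at which point you will have essentially reconstructed the paper's induction.
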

\begin{proof} 
Let $B_1,\ldots,B_m$ be the blocks of $G$, where $m \geq 2$, and assume that at least two of them are non-bipartite. 
We prove the assertion by induction on $m$.

Let $m = 2$. Then $B_1$ and $B_2$ are non-bipartite. 
Thus, $B_1$ and $B_2$ have primitive odd cycle $C_1=p_0 \cdots p_{2k_1} p_0$ and $C_2=q_0 \cdots q_{2k_2} q_0$ ($1\le k_1 \le k_2$), respectively. 
Let $v \in V(B_1) \cap V(B_2)$ be a unique cut vertex. Then we see that every vertex in $V(G) \setminus \{v\}$ is regular, 
implying that $|\Psi_r|\ge |V(G)|-1=d-1$ and $G$ has $|\Psi_f| \ge \min \{|V(C_1),V(C_2)|\}=2k_1+1$ by Lemma~\ref{fundamental} (2). 
\begin{itemize}
\item Suppose that $v\notin V(C_1) \cup V(C_2)$. Then there is a path containing $v$ which connects $V(C_1)$ and $V(C_2)$. 
This is a contradiction to what $G$ satisfies the odd cycle condition. 
\item Suppose that $v \in V(C_1) \setminus V(C_2)$. Let, say, $v=p_0$. Then we can take two fundamental sets on $G$ as follows. 
Let $S_1=\{p_1,p_3,\ldots,p_{2k_1-1}\}$ and $S_2=\{p_2,p_4,\ldots,p_{2k_1}\}$. 
Then there exist fundamental sets $T_1$ and $T_2$ such that $S_i \subset T_i$ and $V(B(T_i))=V(B_1)$ for $i=1,2$ by Lemma~\ref{fundamental} (1). 
Namely, we can get two (or more) fundamental sets. Hence, 
$$t=|\Psi|-\dim \kk[G]=|\Psi_f|+|\Psi_r|-d \ge (2k_1+1)+2+(d-1)-d\ge 4.$$ 
\item Suppose that $v\in V(C_1) \cap V(C_2)$. Let, say, $v=p_0=q_0$. 
Then we can also take two (or more) fundamental sets on $G$ as follows. 
Let $U_1=\{q_1,q_3,\ldots,q_{2k_2-1}\}$ and $U_2=\{q_2,q_4,\ldots,q_{2k_2}\}$ and take $S_1$ and $S_2$ above. 
Then there exist fundamental sets $T'_{i,j}$ for $i=1,2$ and $j=1,2$ such that 
$S_i\cup U_j \subset T'_{i,j}$ and $V(B(T'_{i,j}))=V(G)$ by Lemma~\ref{fundamental} (1). 
Hence, as above, we obtain that $t \geq 4$. 
\end{itemize} 

\medskip

Suppose that $m\ge 3$. Take a block $B_i$ whose degree is $1$ on $\Block(G)$. 
Then $B_i$ has a unique cut vertex $u$ on $G$. Let $H=G\setminus (V(B_i)\setminus \{u\})$ and $b=|V(B_i)|$. 
Note that $H$ has an odd cycle by assumption and every vertex in $B_i\setminus u$ is regular on $G$. Thus, we have 
\begin{align*}
|\Psi_r(G)|=\begin{cases}
|\Psi_r(H)|+(b-1), &\text{if (i) $u$ is non-regular in $H$ and in $G$}, \\
|\Psi_r(H)|+(b-1)-1, &\text{if (ii) $u$ is regular in $H$ and non-regular in $G$}.
\end{cases}
\end{align*}
Notice that if $u$ is regular in $H$ and $G$, then $B_i \setminus u$ and all connected components of $H\setminus u$ contain an odd cycle, 
a contradiction by the same reason as discussed above. 
Moreover, it never happens that $u$ is non-regular on $H$ and regular on $G$. 

In the case of (ii), we have $|\Psi_f(G)|\ge |\Psi_f(H)|+1$ by Remark~\ref{remark}. 
Therefore, in the case of (i), we obtain by inductive hypothesis the following: 
\begin{align*}
t&=|\Psi_r(G)|+|\Psi_f(G)|-d \ge (|\Psi_r(H)|+(b-1)-1)+(|\Psi_f(H)|+1)-d \\
&=|\Psi_r(H)|+|\Psi_f(H)|-(d-(b-1)) =|\Psi(H)|-\dim \kk[H] \\
&\ge 4. 
\end{align*}
\end{proof}

\begin{lem}\label{acceptable}
Let $G$ be a bipartite graph with the partition $V(G)=V_1\sqcup V_2$. 
If $G$ is not a complete bipartite graph, then there exists an acceptable set contained in s$V_1$. 
\end{lem}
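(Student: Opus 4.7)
The plan is to produce a single set $T$ fulfilling all three acceptability requirements by carefully exploiting one non-edge of $G$. Working under the standing hypothesis that $G$ is connected, I would first fix a pair $u \in V_1$, $v \in V_2$ with $\{u,v\} \notin E(G)$, which exists because $G$ is not complete bipartite. Then I consider the subgraph $G \setminus (\{u\} \cup N_G(u))$, in which $v$ still appears since $v \notin N_G(u)$, and let $C$ denote its connected component containing $v$. The candidate is $T = V_1 \setminus V(C)$, which is nonempty because $u \notin V(C)$.

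The next step is to establish the structural identity $N_G(T) = V_2 \setminus V(C)$. For the inclusion ``$\supseteq$'', I would split $w \in V_2 \setminus V(C)$ into the case $w \in N_G(u)$ (where $u \in T$ witnesses $w \in N_G(T)$) and the case where $w$ lies in another component $C'$ of $G \setminus (\{u\} \cup N_G(u))$; in the latter case $w$ must have a neighbor in $V_1 \cap V(C') \subset T$ because $w$ is not isolated in $G$. For ``$\subseteq$'', I would argue that any neighbor of $w \in V_2 \cap V(C)$ lying in $T = V_1 \setminus V(C)$ would share a component of $G \setminus (\{u\} \cup N_G(u))$ with $w$, contradicting $w \in V(C)$. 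Granted this identity, $V(B(T)) = V(G) \setminus V(C)$, so $G \setminus V(B(T)) = G[V(C)] = C$, which is connected by construction and carries an edge from $v$ to any of its neighbors in $V_1 \cap V(C)$ (such a neighbor exists because $G$ is connected and $v$ is not adjacent to $u$).

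The hard part will be proving that $B(T) = G[V(G) \setminus V(C)]$ is itself connected. My approach is to decompose $V(G) \setminus V(C) = \{u\} \cup N_G(u) \cup \bigcup_{C' \ne C} V(C')$, observe that the ``core'' $\{u\} \cup N_G(u)$ forms a star centered at $u$, and show that each residual component $C'$ of $G \setminus (\{u\} \cup N_G(u))$ is attached to this star by an edge of $G$. I would produce this edge by taking any $G$-path from $u$ into $V(C')$ and examining the step that first enters $V(C')$: the preceding vertex cannot be $u$ (since $N_G(u) \cap V(C') = \emptyset$), nor a vertex of $G \setminus (\{u\} \cup N_G(u))$ outside $V(C')$ (else two adjacent vertices would lie in different components of $G \setminus (\{u\} \cup N_G(u))$), so it must lie in $N_G(u)$ and join it to $V_1 \cap V(C')$. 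Stitching these attachments onto the star yields connectivity of $B(T)$ and thus acceptability of $T$.
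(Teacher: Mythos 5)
Your proposal is correct and takes essentially the same approach as the paper: your candidate $T=V_1\setminus V(C)$ is precisely the paper's acceptable set $A_i$ (there written as $\{v_0,v_1,\ldots,v_m\}\cup\bigcup_{j\in[n],\,j\ne i}\bigl(V(C_j)\cap V_1\bigr)$), obtained by deleting $\{u\}\cup N_G(u)=V(B(\{u\}))$ and setting aside one edge-containing component of the remainder. The only difference is cosmetic --- the paper chooses $v_0$ of minimum degree in $V_1$ to guarantee that some remaining component has an edge, whereas you extract such a component directly as the one containing the non-neighbor $v$ from a non-edge $\{u,v\}$ --- and your explicit verifications of $N_G(T)=V_2\setminus V(C)$ and of the connectivity of $B(T)$ simply spell out what the paper compresses into ``$B(A_i)$ is connected since $G$ is connected.''
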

\begin{proof}
Let $n_1=|V_1|$ and $n_2=|V_2|$. Note that $n_1,n_2\ge 2$ since $G$ is connected and non-complete bipartite. 
Take a vertex $v_0\in V_1$ such that $\deg(v_0)=\min \{\deg(v) : v\in V_1\}$. Then $\deg(v_0) < n_2$. 
Moreover, $G\setminus V(B(\{v_0\}))$ contains connected components $C_1,\ldots,C_n$ which have at least one edge, 
and it might have some isolated vertices $v_1,\ldots,v_m$ in $V_1$. For $i \in [n]$, 
let $A_i=\{v_0,v_1,\ldots,v_m\}\cup \bigl( \bigcup_{j\in [n],j\ne i} V(C_j)\cap V_1\bigr)$. Then each $A_i$ is acceptable. 
In fact, $B(A_i)$ is connected since $G$ is connected, and $G\setminus V(B(A_i))=C_i$ is a connected graph with at least one edge. 
\end{proof}

We define two graphs $K_{s_1,s_2}^{t_1,t_2}$ and $K_{1,s_1,s_2}^{t_1,t_2}$ as follows: 
\begin{defi}\label{bipartite}
Let $s_1$, $s_2$, $t_1$, $t_2$ be integers with $0 \leq t_1 <s_1$ and $0 \leq t_2 < s_2$. 
\begin{itemize}
\item Let $K_{s_1,s_2}^{t_1,t_2}$ denote the bipartite graph on the vertex set $V(K_{s_1,s_2}^{t_1,t_2})=[d]$ ($d=s_1+s_2+t_1+t_2$) 
with the edge set \begin{align*}
E(K_{s_1,s_2}^{t_1,t_2}) &=\{\{i,j\} : 1\le i \le s_1+t_1, s_1+t_1+t_2+1 \le j \le d\} \\
&\cup \{\{i,j\} : 1 \le i \le s_1, s_1+t_1+1 \le j \le d \}.\end{align*}
See Figure~\ref{graph2}. 
\item Let $K_{1,s_1,s_2}^{t_1,t_2}$ denote the graph on the vertex set $V(K_{1,s_1,s_2}^{t_1,t_2})=[d+1]$ ($d=s_1+s_2+t_1+t_2$) 
with the edge set $$E(K_{1,s_1,s_2}^{t_1,t_2})=E(K_{s_1,s_2}^{t_1,t_2})\cup \{\{i,d+1\} : 1\le i \le s_1 \text{ or } s_1+t_1+t_2+1 \le i \le d\}.$$ 
See Figure~\ref{graph22}. 
\end{itemize}
\end{defi}
\begin{figure}[h]
{\scalebox{0.8}{
\begin{minipage}{0.49\columnwidth}
\centering
{\scalebox{0.65}{
\begin{tikzpicture}[line width=0.02cm]

\coordinate (N11) at (0,0); 
\coordinate (N12) at (0,1); 
\coordinate (N13) at (0,2); 
\coordinate (N14) at (0,3);
\coordinate (N15) at (0,4); 
\coordinate (N16) at (0,5); 
\coordinate (N17) at (0,6); 
\coordinate (N18) at (0,7);

\coordinate (N21) at (4,0); 
\coordinate (N22) at (4,1); 
\coordinate (N23) at (4,2); 
\coordinate (N24) at (4,3);
\coordinate (N25) at (4,4); 
\coordinate (N26) at (4,5); 
\coordinate (N27) at (4,6); 
\coordinate (N28) at (4,7);

\coordinate (Ntt1) at (1,2.5);

 
\draw[dotted]  (0,1.8)--(0,2.2);
\draw[dotted]  (0,4.8)--(0,5.2);

\draw[dotted]  (4,1.8)--(4,2.2);
\draw[dotted]  (4,4.8)--(4,5.2);

\draw  (N18)--(N21);
\draw  (N18)--(N22);
\draw  (N18)--(N24);
\draw  (N18)--(N25);
\draw  (N18)--(N27);
\draw  (N18)--(N28);

\draw  (N17)--(N21);
\draw  (N17)--(N22);
\draw  (N17)--(N24);
\draw  (N17)--(N25);
\draw  (N17)--(N27);
\draw  (N17)--(N28);

\draw  (N15)--(N21);
\draw  (N15)--(N22);
\draw  (N15)--(N24);
\draw  (N15)--(N25);
\draw  (N15)--(N27);
\draw  (N15)--(N28);

\draw  (N14)--(N25);
\draw  (N14)--(N27);
\draw  (N14)--(N28);

\draw  (N12)--(N25);
\draw  (N12)--(N27);
\draw  (N12)--(N28);

\draw  (N11)--(N25);
\draw  (N11)--(N27);
\draw  (N11)--(N28);

\draw [line width=0.05cm, fill=white] (N11) circle [radius=0.15] node[] at (-1,0) {\Large $s_1+t_1$}; 
\draw [line width=0.05cm, fill=white] (N12) circle [radius=0.15] node[] at (-1.5,1) {\Large $s_1+t_1-1$};
\draw [line width=0.05cm, fill=white] (N14) circle [radius=0.15] node[] at (-1,3) {\Large $s_1+1$};
\draw [line width=0.05cm, fill=white] (N15) circle [radius=0.15] node[] at (-0.5,4) {\Large $s_1$};
\draw [line width=0.05cm, fill=white] (N17) circle [radius=0.15] node[] at (-0.5,6) {\Large $2$};
\draw [line width=0.05cm, fill=white] (N18) circle [radius=0.15] node[] at (-0.5,7) {\Large $1$};

\draw [line width=0.05cm, fill=white] (N21) circle [radius=0.15] node[] at (5.5,0) {\Large $s_1+t_1+1$};
\draw [line width=0.05cm, fill=white] (N22) circle [radius=0.15] node[] at (5.5,1) {\Large $s_1+t_1+2$};
\draw [line width=0.05cm, fill=white] (N24) circle [radius=0.15] node[] at (5.5,3) {\Large $s_1+t_1+t_2$};
\draw [line width=0.05cm, fill=white] (N25) circle [radius=0.15] 
node[] at (5,4.3) {\Large $s_1+t_1$} node[] at (5.5,3.8) {\Large $+t_2+1$};
\draw [line width=0.05cm, fill=white] (N27) circle [radius=0.15] node[] at (4.8,6) {\Large $d-1$};
\draw [line width=0.05cm, fill=white] (N28) circle [radius=0.15] node[] at (4.5,7) {\Large $d$};


\node at (7,0) {$$}; 

\end{tikzpicture}
} }
\caption{\; \\ The graph $K_{s_1,s_2}^{t_1,t_2}$}
\label{graph2}
\end{minipage}
\begin{minipage}{0.49\columnwidth}
\centering
{\scalebox{0.6}{
\begin{tikzpicture}[line width=0.02cm]

\coordinate (N11) at (0,0); 
\coordinate (N12) at (0,1); 
\coordinate (N13) at (0,2); 
\coordinate (N14) at (0,3);
\coordinate (N15) at (0,4); 
\coordinate (N16) at (0,5); 
\coordinate (N17) at (0,6); 
\coordinate (N18) at (0,7);

\coordinate (N21) at (4,0); 
\coordinate (N22) at (4,1); 
\coordinate (N23) at (4,2); 
\coordinate (N24) at (4,3);
\coordinate (N25) at (4,4); 
\coordinate (N26) at (4,5); 
\coordinate (N27) at (4,6); 
\coordinate (N28) at (4,7);

\coordinate (Ntt1) at (2,8);

 
\draw[dotted]  (0,1.8)--(0,2.2);
\draw[dotted]  (0,4.8)--(0,5.2);

\draw[dotted]  (4,1.8)--(4,2.2);
\draw[dotted]  (4,4.8)--(4,5.2);

\draw  (N18)--(N21);
\draw  (N18)--(N22);
\draw  (N18)--(N24);
\draw  (N18)--(N25);
\draw  (N18)--(N27);
\draw  (N18)--(N28);

\draw  (N17)--(N21);
\draw  (N17)--(N22);
\draw  (N17)--(N24);
\draw  (N17)--(N25);
\draw  (N17)--(N27);
\draw  (N17)--(N28);

\draw  (N15)--(N21);
\draw  (N15)--(N22);
\draw  (N15)--(N24);
\draw  (N15)--(N25);
\draw  (N15)--(N27);
\draw  (N15)--(N28);

\draw  (N14)--(N25);
\draw  (N14)--(N27);
\draw  (N14)--(N28);

\draw  (N12)--(N25);
\draw  (N12)--(N27);
\draw  (N12)--(N28);

\draw  (N11)--(N25);
\draw  (N11)--(N27);
\draw  (N11)--(N28);

\draw  (Ntt1)--(N18);
\draw  (Ntt1)--(N17);
\draw  (Ntt1)--(N15);
\draw  (Ntt1)--(N28);
\draw  (Ntt1)--(N27);
\draw  (Ntt1)--(N25);

\draw [line width=0.05cm, fill=white] (N11) circle [radius=0.15] node[] at (-1,0) {\Large $s_1+t_1$}; 
\draw [line width=0.05cm, fill=white] (N12) circle [radius=0.15] node[] at (-1.5,1) {\Large $s_1+t_1-1$};
\draw [line width=0.05cm, fill=white] (N14) circle [radius=0.15] node[] at (-1,3) {\Large $s_1+1$};
\draw [line width=0.05cm, fill=white] (N15) circle [radius=0.15] node[] at (-0.5,4) {\Large $s_1$};
\draw [line width=0.05cm, fill=white] (N17) circle [radius=0.15] node[] at (-0.5,6) {\Large $2$};
\draw [line width=0.05cm, fill=white] (N18) circle [radius=0.15] node[] at (-0.5,7) {\Large $1$};

\draw [line width=0.05cm, fill=white] (N21) circle [radius=0.15] node[] at (5.5,0) {\Large $s_1+t_1+1$};
\draw [line width=0.05cm, fill=white] (N22) circle [radius=0.15] node[] at (5.5,1) {\Large $s_1+t_1+2$};
\draw [line width=0.05cm, fill=white] (N24) circle [radius=0.15] node[] at (5.5,3) {\Large $s_1+t_1+t_2$};
\draw [line width=0.05cm, fill=white] (N25) circle [radius=0.15] 
node[] at (5,4.3) {\Large $s_1+t_1$} node[] at (5.5,3.8) {\Large $+t_2+1$};
\draw [line width=0.05cm, fill=white] (N27) circle [radius=0.15] node[] at (4.8,6) {\Large $d-1$};
\draw [line width=0.05cm, fill=white] (N28) circle [radius=0.15] node[] at (4.5,7) {\Large $d$};

\draw [line width=0.05cm, fill=white] (Ntt1) circle [radius=0.15] node[] at (2.8,8.2) {\Large $d+1$};

\node at (7,0) {$$}; 

\end{tikzpicture}
} }
\caption{\; \\ The graph $K_{1,s_1,s_2}^{t_1,t_2}$}
\label{graph22}
\end{minipage}
}}
\end{figure}

Note that $K_{s_1,s_2}^{t_1,t_2}$ (resp. $K_{1,s_1,s_2}^{t_1,t_2}$) is 
a complete bipartite graph $K_{s_1,s_2}$ (resp. a complete $3$-partite graph $K_{1,s_1,s_2}$) minus the edges of $K_{t_1,t_2}$. 
Thus, $K_{s_1,s_2}^{t_1,t_2}$ is bipartite, but $K_{1,s_1,s_2}^{t_1,t_2}$ is not. 
When $t_1=t_2=0$, we regard $K_{s_1,s_2}^{t_1,t_2}$ (resp. $K_{1,s_1,s_2}^{t_1,t_2}$) as $K_{s_1,s_2}$ (resp. $K_{1,s_1,s_2}$) itself.

\medskip

First, we discuss the case of bipartite graphs. 
We give the characterization of which $\Cl(\kk[G])$ is isomorphic to $\ZZ$ or $\ZZ^2$ in terms of $G$ for bipartite graphs. 
By Proposition~\ref{product}, we may assume that $G$ is $2$-connected.

\begin{thm}\label{thm:bip_class_group}
Let $G$ be a $2$-connected bipartite graph with its partition $V(G)=V_1\sqcup V_2$.
\begin{itemize}
\item[(1)] $\Cl(\kk[G]) \cong \ZZ$ if and only if $G$ is a complete bipartite graph $K_{s_1,s_2}$ with $s_1,s_2\ge 2$.
\item[(2)] $\Cl(\kk[G]) \cong \ZZ^2$ if and only if $G$ is a bipartite graph $K_{s_1,s_2}^{t_1,t_2}$ for some $t_1,t_2\ge 1$ and $s_1,s_2 \ge 2$.
\end{itemize}
\end{thm}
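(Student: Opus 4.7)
The plan is to combine Theorem~\ref{thm:class_edge} with the irredundant description of facet-hyperplanes for connected bipartite graphs given by the proposition at the end of Section~\ref{sec:pre}. Since $G$ is $2$-connected bipartite on $d$ vertices, every vertex is ordinary, $\dim\kk[G]=d-1$, and $|\Psi_o|=d$, so
\[
\rank\Cl(\kk[G])\;=\;|\Psi|-\dim\kk[G]\;=\;|\Psi_a|+1,
\]
where $\Psi_a$ is indexed by acceptable sets contained in one fixed side $V_1$. Thus statement~(1) reduces to ``$|\Psi_a|=0$ iff $G=K_{s_1,s_2}$'' and statement~(2) to ``$|\Psi_a|=1$ iff $G=K_{s_1,s_2}^{t_1,t_2}$''.

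For (1), the forward direction is immediate from Lemma~\ref{acceptable}. For the converse, in $K_{s_1,s_2}$ every nonempty proper $T\subset V_1$ satisfies $N(T)=V_2$, hence $G\setminus V(B(T))=V_1\setminus T$ is an independent set and fails acceptability; the lower bounds $s_1,s_2\geq 2$ are forced by $2$-connectedness.

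For the ``if'' part of (2), I decompose $V(K_{s_1,s_2}^{t_1,t_2})=A_1\sqcup A_2\sqcup B_1\sqcup B_2$ as in Definition~\ref{bipartite}, so that $N(v)=V_2$ for $v\in A_1$ and $N(v)=B_2$ for $v\in A_2$. A case split on $T\subset V_1$ gives: if $T\cap A_1\neq\emptyset$ then $N(T)=V_2$ and $G\setminus V(B(T))\subset V_1$ is edgeless; if $T\subsetneq A_2$, then every $v\in A_2\setminus T$ is isolated in $G\setminus V(B(T))$. Only $T=A_2$ satisfies both acceptability conditions, yielding $|\Psi_a|=1$.

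The main obstacle is the ``only if'' direction of~(2). Let $T_0\subset V_1$ be the unique acceptable set and set $A_2=T_0$, $B_2=N_G(T_0)$, $A_1=V_1\setminus A_2$, $B_1=V_2\setminus B_2$. Acceptability of $T_0$ forces $A_1\cup B_1$ to be connected with at least one edge, which must lie in $A_1\times B_1$; so $A_1,B_1\neq\emptyset$, and nonemptiness of $A_2,B_2$ then follows from $2$-connectedness. The structural core is to show that the non-edges of $G$ are exactly $A_2\times B_1$, i.e., every $v\in A_1$ is joined to all of $V_2$ and every $w\in B_2$ is joined to all of $V_1$. I would prove this by contradiction: any hypothetical non-edge outside $A_2\times B_1$ should yield a second acceptable set distinct from $T_0$. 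Natural candidates are $T_0\cup\{v\}$ when $v\in A_1$ has a neighbor in $B_2$, $T_0\setminus\{v'\}$ when $v'\in A_2$ misses some $w\in B_2$, and the singleton $\{v\}$ in the degenerate case $N(v)\subset B_1$. In each instance the connectivity of $B(T)$ and of $G\setminus V(B(T))$ must be verified by leveraging $2$-connectedness, which rules out isolated-vertex configurations arising after removing $V(B(T))$. The hardest sub-case is the degenerate one $N(v)\subset B_1$ with $v\in A_1$, where $T_0\cup\{v\}$ fails to induce a connected bipartite graph $B(\cdot)$ and one has to switch to the singleton construction to force the contradiction; this is where I expect the bulk of the bookkeeping to lie.
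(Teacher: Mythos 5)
Your reduction of the statement to counting acceptable sets (rank $=|\Psi_a|+1$ via Theorem~\ref{thm:class_edge} and the irredundancy proposition), your treatment of (1), and your direct verification that $K_{s_1,s_2}^{t_1,t_2}$ has exactly one acceptable set inside $V_1$ all agree with the paper. The gap is in the only-if direction of (2), which you leave as a plan: the central claim that any non-edge outside $A_2\times B_1$ produces a second acceptable set is never verified, and the witnesses you assign to the cases fail as stated. Concretely, let $G$ be $K_{s_1,s_2}^{t_1,t_2}$ with one additional non-edge $\{v,w\}$, $v\in A_1$, $w\in B_2$, so that $N(v)=B_1\cup(B_2\setminus\{w\})$. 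Here $v$ has neighbours in $B_2$, so your recipe prescribes $T_0\cup\{v\}$; but $N(T_0\cup\{v\})=B_2\cup N(v)=V_2$, hence $G\setminus V(B(T_0\cup\{v\}))=A_1\setminus\{v\}$ is edgeless and $T_0\cup\{v\}$ is \emph{not} acceptable. The correct witness is the singleton $\{v\}$: the remainder $(A_1\setminus\{v\})\cup A_2\cup\{w\}$ is a star centred at $w$ — yet you reserve singletons for the degenerate case $N(v)\subset B_1$, which does not hold here. Similarly, for $v'\in A_2$ missing some $w\in B_2$, the candidate $T_0\setminus\{v'\}$ fails whenever $N(v')\subseteq N(T_0\setminus\{v'\})$ (for instance if another vertex of $A_2$ dominates $v'$'s neighbourhood), since $v'$ is then isolated in the remainder; again a different witness such as $\{v'\}$ must be used. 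So what you expect to be routine bookkeeping is exactly where the argument can break: which perturbation of $T_0$ is acceptable depends on finer structure of $G$ than your case split records, and no uniform assignment of the three candidate families to the three types of bad non-edges is correct.

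The paper sidesteps this ad hoc search altogether. Rather than starting from the abstract unique acceptable set $T_0$, it builds the decomposition from a minimum-degree vertex $v_0\in V_1$ via Lemma~\ref{acceptable}: the acceptable set $A_1=\{v_0,v_1,\ldots,v_m\}$ then has $B(A_1)$ complete bipartite essentially by construction, uniqueness ($|\Psi_a|=1$) forces the number $n$ of non-trivial components of $G\setminus V(B(\{v_0\}))$ to be $1$, and completeness of the two remaining pieces, $C_1$ and $G_W$ with $W=(V(C_1)\cap V_1)\cup N_G(v_0)$, is obtained by applying Lemma~\ref{acceptable} \emph{inside the induced piece} and extending the resulting acceptable set of the piece to an acceptable set of $G$ (taking $A$ or $A\cup A_1$ according to whether $N_G(A)$ meets $N_G(v_0)$). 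That mechanism — a non-complete piece carries its own acceptable set, which lifts to $G$ — is the idea your plan is missing; with it, the case-by-case guessing of witnesses becomes unnecessary. To repair your draft you would either need to import that lemma-based extension argument, or carry out a substantially more careful witness analysis than the one you sketch.
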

\begin{proof}
(1) Since every vertex in $G$ is ordinary, 
we see that $\rank(\Cl(\kk[G]))=|\Psi|-\dim \kk[G]=|\Psi_o|+|\Psi_a|-(d-1)=|\Psi_a|+1$ (see Theorem~\ref{thm:class_edge}). 
If $G$ is not a complete bipartite, then $G$ contains an acceptable set by Lemma~\ref{acceptable} and we have $t\ge 2$. 
Therefore, we can see that $G$ is a complete bipartite and $s_1,s_2\ge 2$ since $G$ is $2$-connected. 
Conversely, if $G$ is a complete bipartite graph $K_{s_1,s_2}$ with $s_1,s_2\ge 2$, 
then it is easy to check that $\Cl(K_{s_1,s_2}) \cong \ZZ$. 

\noindent
(2) Assume that $\Cl(\kk[G]) \cong \ZZ^2$. 
By (1), $G$ cannot be a complete bipartite graph. Thus, we can take $v_0,v_1,\ldots,v_m$, $C_1,\ldots,C_n$ and $A_1,\ldots,A_n$ mentioned in Lemma~\ref{acceptable}. 
We can see that $n=1$ since $t=|\Psi_a|+1=2$. Moreover, we see that $B(\{v_0,v_1\ldots,v_m\})$ is a complete bipartite by definition of $v_0,v_1,\ldots,v_m$. 
Note that $A_1=\{v_0,v_1,\ldots,v_m\}$. Thus, it is enough to show that $C_1$ and $G_W$ are complete bipartite graphs, 
where $W=(V(C_1)\cap V_1)\cup N_G(v_0)$. 

If $C_1$ is not a complete bipartite graph, then we can take an acceptable set $A\subset V_1$ of $C_1$ by Lemma~\ref{acceptable} 
and $A'$ is an acceptable set of $G$, where 
\begin{align*}
A'=\begin{cases}
A  &\text{ if } N_G(A)\cap N_G(v_0)=\emptyset, \\
A\cup A_1  &\text{ if } N_G(A)\cap N_G(v_0)\ne \emptyset, 
\end{cases}
\end{align*}
a contradiction. Similarly, if $G_W$ is not a complete bipartite graph, then we can take an acceptable set of $G$ by the same way in Lemma~\ref{acceptable}. 
Let $s_1=|V(C_1) \cap V_1|$, $s_2=|N_G(A_1)|$, $t_1=|A_1|$ and $t_2=|V(C_1)\cap V_2|$. 
Then $G$ coincide with $K_{s_1,s_2}^{t_1,t_2}$ and we see that $s_1,s_2\ge 2$ since $G$ is $2$-connected. 
Conversely, the subset $\{s_1+1,\ldots,s_1+t_1\}$ of $V(K_{s_1,s_2}^{t_1,t_2})$ is a unique acceptable set of $K_{s_1,s_2}^{t_1,t_2}$ 
and we have $\Cl(\kk[K_{s_1,s_2}^{t_1,t_2}]) \cong \ZZ^2$. 
\end{proof}

\medskip

Next, we discuss non-bipartite graphs. 
\begin{lem}\label{lem:regular}
Let $G$ be a $2$-connected graph with primitive odd cycles $C_i=p_{i,0} \cdots p_{i,2k_i} p_{i,0}$ for $i\in [m]$, where $1\le k_1 \le \cdots \le k_m$, 
and let $P=x_0 x_1 \cdots x_l$ with $l\ge 2$ be a primitive path whose end vertices $x_0,x_l$ are in $V(C_m)$ and $x_k \notin V(C_m)$ for all $k\in [l-1]$. 
\begin{itemize}
\item[(1)] For $j\in \{0,1,\ldots,2k_m\}$, $p_{m,j}$ is non-regular in $G$ if and only if $p_{m,j}\in V(C_i)$ for all $i \in [m]$.
\item[(2)] Suppose that $x_0=p_{m,0}$ and $x_l=p_{m,j}$ ($j\ne 1,2k_m$). Then $C_m$ has a regular vertex in $G$.
\item[(3)] Suppose that $\{x_0,x_l\}=\{p_{m,j},p_{m,j+1}\}$ for $j\in \{0,1,\ldots,2k_m\}$, where $p_{2k_m+1}=p_0$ and $l=2l'+1$. 
Then there are two different fundamental sets $T_1, T_2$ such that 
$E(C_m)\setminus \{p_{m,j},p_{m,j+1}\}\subset E(B(T_i))$ and $\{p_{m,j},p_{m,j+1}\} \notin E(B(T_i))$ for $i=1,2$. 
\end{itemize}
\end{lem}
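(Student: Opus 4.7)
For part (1), since $G$ is $2$-connected, $G \setminus p_{m,j}$ remains connected, so $p_{m,j}$ is non-regular precisely when $G \setminus p_{m,j}$ contains no odd cycle. Because every odd cycle reduces by successive chord removals to a primitive odd subcycle, this is equivalent to every $C_i$ passing through $p_{m,j}$, i.e., $p_{m,j} \in V(C_i)$ for all $i \in [m]$.

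For part (2), the assumption $j \ne 1, 2k_m$ ensures that both arcs of $C_m$ from $p_{m,0}$ to $p_{m,j}$ have length at least $2$. Call them $Q$ (length $j$) and $Q'$ (length $2k_m+1-j$); since $j$ and $2k_m+1-j$ have opposite parities, exactly one of the closed walks $P + Q$ and $P + Q'$ has odd length. I would reduce that odd closed walk to a primitive odd cycle $C'$ in $G$ by successive chord splitting; by construction $V(C') \subseteq V(P) \cup V(Q)$ (say), and the assumption $x_k \notin V(C_m)$ for $k \in [l-1]$ together with $j \leq 2k_m - 1$ forces every interior vertex of the other arc $Q'$ to be disjoint from $V(C')$. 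Any such interior vertex lies in $V(C_m)$ but is avoided by $C'$, so by part (1) it is a regular vertex of $G$ on $C_m$.

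For part (3), let $B^\ast = \{p_{m,j+2}, p_{m,j+4}, \ldots, p_{m,j-1}\}$ (indices modulo $2k_m+1$), the alternate vertices along the long arc of $C_m$ from $p_{m,j+1}$ to $p_{m,j}$. Primitivity of $C_m$ rules out chords within $V(C_m)$, so $B^\ast$ is independent, and $B(B^\ast)$ is connected with $E(B(B^\ast)) \supseteq E(C_m) \setminus \{p_{m,j}, p_{m,j+1}\}$, while $\{p_{m,j}, p_{m,j+1}\} \notin E(B(B^\ast))$ since neither endpoint lies in $B^\ast$. Because $V(C_m) \subseteq V(B(B^\ast))$, any extension $T \supseteq B^\ast$ produced by Lemma~\ref{fundamental}(1) can only add vertices outside $V(C_m)$, so $T \cap V(C_m) = B^\ast$ automatically and the desired edge conditions persist. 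A first fundamental set $T_1$ is thus obtained directly from Lemma~\ref{fundamental}(2) applied to $C_m$ at index $j$.

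For a second fundamental set $T_2 \ne T_1$ I would use the extra odd structure supplied by $P$: the cycle $C^\ast$ formed by the long arc of $C_m$ together with $P$ has odd length $2k_m + 2l'+1$, and the interior vertices $y_1,\ldots,y_{l-1}$ (non-empty since $l \geq 3$) furnish at least two genuinely different ways to complete $B^\ast$ to a fundamental set, for instance by entering $V(P^\circ)$ from either end $y_1$ or $y_{l-1}$, both of which are neighbors of $x_0 = p_{m,j}$ or $x_l = p_{m,j+1}$ and hence at distance at most $2$ from $B^\ast$. The main obstacle is that $C^\ast$ is not primitive in $G$ (the edge $\{p_{m,j}, p_{m,j+1}\}$ is already a chord), and primitivity of $P$ forbids chords only among interior vertices of $P$ and not cross-edges between $V(P^\circ)$ and $V(C_m)$; a case analysis based on how $V(P^\circ)$ meets $V(B(B^\ast))$ is therefore needed to guarantee that the two completions can be chosen distinct while preserving independence.
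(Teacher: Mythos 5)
Your parts (1) and (2) are correct and follow essentially the paper's own route. Part (1) is verbatim the paper's argument (2-connectedness makes $G\setminus p_{m,j}$ connected, so non-regularity means no odd cycle survives, which by chord-splitting means every primitive odd cycle $C_i$ passes through $p_{m,j}$). In part (2) you are actually slightly more careful than the paper: the paper directly asserts that one of the two cycles $P+Q$, $P+Q'$ \emph{is} a primitive odd cycle, whereas you only claim the odd one \emph{contains} a primitive odd cycle $C'$ with $V(C')\subseteq V(P)\cup V(Q)$ and then apply (1) to the interior of the unused arc; both versions use $j\ne 1,2k_m$ in the same way to make that interior non-empty, so this is the same proof, tidied up.

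The genuine gap is in part (3): you never actually produce the second fundamental set. Your text ends with ``a case analysis \ldots is therefore needed,'' which names an obstacle but does not overcome it, and the mechanism you sketch (``entering $V(P^\circ)$ from either end'') does not by itself guarantee that the two extensions supplied by Lemma~\ref{fundamental}~(1) are distinct, nor that the edge conditions survive the extension. The paper's proof of (3) rests on one explicit construction that your proposal is missing: taking $j=0$ without loss of generality, it uses the parity hypothesis $l=2l'+1$ to form \emph{two} sets $S_1=\{p_{m,2},p_{m,4},\ldots,p_{m,2k_m}\}\cup\{x_1,x_3,\ldots,x_{2l'-1}\}$ and $S_2=\{p_{m,2},p_{m,4},\ldots,p_{m,2k_m}\}\cup\{x_2,x_4,\ldots,x_{2l'}\}$, i.e., it adjoins to the alternate cycle vertices the \emph{two alternating halves} of the interior of $P$; each $S_i$ is independent with $B(S_i)$ connected, and Lemma~\ref{fundamental}~(1) extends each to a fundamental set $T_i$. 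Distinctness of $T_1$ and $T_2$ is then automatic: $x_1\in T_1$ and $x_2\in T_2$ are adjacent, so no single independent set can contain both; and $p_{m,0},p_{m,1}\notin T_i$ because they are adjacent to $p_{m,2k_m},p_{m,2}\in T_i$, which is what preserves $\{p_{m,0},p_{m,1}\}\notin E(B(T_i))$. Note that the oddness of $l$ --- which your sketch records but never substantively uses --- is precisely what makes both alternating halves of $P^\circ$ available as the source of the two fundamental sets. Your concern about cross-edges between $V(P^\circ)$ and $V(C_m)$ is not frivolous (the paper asserts the independence of $S_1,S_2$ without comment, and such cross-edges are exactly what that assertion tacitly excludes), but flagging the difficulty is not the same as resolving it: as written, your proposal proves only the existence of $T_1$ and leaves the two-set conclusion of (3) unestablished.
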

\begin{proof}
(1) If there exists $i\in [m]$ such that $p_{m,j}\notin V(C_i)$, 
then the connected graph $G\setminus p_{m,j}$ contains $C_i$ as a subgraph. Hence, $p_{m,j}$ is regular in $G$. 
Conversely, if $p_{m,j}\in V(C_i)$ for all $i \in [m]$, then the connected graph $G\setminus p_{m,j}$ has no odd cycles. 
Thus, $p_{m,j}$ is non-regular. 

\noindent
(2) Let $C=x_0 x_1\cdots x_l p_{m,j-1} p_{m,j-2} \cdots p_{m,0}$ and 
$C'=x_0\cdots x_l p_{m,j+1} p_{m,j+2} \cdots p_{m,2k_m} p_{m,0}$. 
Then $C$ or $C'$ is a primitive odd cycle because $C_m$ is a primitive odd cycle. Therefore, $p_{m,1},\ldots,p_{m,j-1}$ or $p_{m,j+1},\ldots,p_{m,2k_m}$ are regular vertices in $V(C_m)$.

\noindent
(3) We may assume that $j=0$. Let $S_1=\{p_{m,2},p_{m,4},\ldots, p_{m,2k_m},x_1,x_3,\cdots,x_{2l'-1}\}$ and 
$S_2=\{p_{m,2},p_{m,4},\ldots, p_{m,2k_m},x_2,x_4,\cdots,x_{2l'}\}$ are independent sets and $N_G(S_i)$ is connected for $i=1,2$. 
Therefore, the statement immediately follows from Lemma~\ref{fundamental} (1). 
\end{proof}


\begin{thm}\label{thm:nonbip_class_group}
Let $G$ be a $2$-connected non-bipartite graph. 
\begin{itemize}
\item[(1)] $\Cl(\kk[G]) \cong \ZZ$ if and only if $G$ is obtained by one of the following two ways. 

For the complete bipartite graph $K_{s_1,s_2}$ with $s_1,s_2\ge 2$, 
\begin{itemize}
\item[(1-1)] choose $i$ and $j$ from the different partition, respectively, and connect them by a path of even length at least $2$ (see {\em Figure~\ref{graph(a1)}}); or 
\item[(1-2)] choose $i$ and $j$ from the same partition and connect them by a path of odd length (see {\em Figure~\ref{graph3}}). 
\end{itemize} 

\medskip

\item[(2)] $\Cl(\kk[G]) \cong \ZZ^2$ if and only if $G$ is obtained by one of the following six ways. 

For the complete bipartite graph $K_{s_1,s_2}$ and $K_{t_1,t_2}$ with $s_1,s_2,t_1,t_2\ge 2$; 
\begin{itemize}
\item[(2-1)] choose $i$ and $j$ (resp., $k$ and $l$) from the different partition of $K_{s_1,s_2}$ (resp., $K_{t_1,t_2}$), respectively, 
and connect $i$ and $k$ by a path $P_{i,k}$, $j$ and $l$ by a path $P_{j,l}$ such that the sum of the lengths of $P_{i,k}$ and $P_{j,l}$ is odd (see {\em Figure~\ref{graph(b1)}}); or 
\item[(2-2)] choose $i$ and $j$ from the same partition of $K_{s_1,s_2}$ and choose $k$ and $l$ from the different partition of $K_{t_1,t_2}$, respectively, 
and connect $i$ and $k$ by a path $P_{i,k}$, $j$ and $l$ by a path $P_{j,l}$ such that the sum of the lengths of $P_{i,k}$ and $P_{j,l}$ is even (see {\em Figure~\ref{graph(b2)}}); or 
\item[(2-3)] choose $i$ and $j$ (resp., $k$ and $l$) from the same partition of $K_{s_1,s_2}$ (resp., $K_{t_1,t_2}$), respectively, 
and connect $i$ and $k$ by a path $P_{i,k}$, $j$ and $l$ by a path $P_{j,l}$ such that the sum of the lengths of $P_{i,k}$ and $P_{j,l}$ is odd (see {\em Figure~\ref{graph(b3)}}); 
\end{itemize}
where if the length of the path is allowed to be $0$, then identify $i$ and $k$ (or $j$ and $l$). 

For the bipartite graph $K_{s_1,s_2}^{t_1,t_2}$ with $s_1,s_2\ge 2$; 
\begin{itemize}
\item[(2-4)] choose $i$ and $j$ from the different partition, respectively, and connect them by a path of even length at least $2$ (see {\em Figure~\ref{graph(b4)}}); or 
\item[(2-5)] choose $i$ and $j$ from the same partition and connect them by a path of odd length (see {\em Figure~\ref{graph(b5)}}); or 
\item[(2-6)] $G$ coincides with $K_{1,s_1,s_2}^{t_1,t_2}$ with $s_1,s_2\ge 2$ (see {\em Figure~\ref{graph22}}). 
\end{itemize}
\end{itemize}
\end{thm}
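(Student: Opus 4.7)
The plan is to apply a refined form of Theorem~\ref{thm:class_edge}. Since $G$ is $2$-connected, every vertex is ordinary, so $|\Psi_r|$ equals the number of regular vertices and
\[
\rank \Cl(\kk[G]) \;=\; |\Psi_f| - |N|,
\]
where $N$ denotes the set of non-regular vertices of $G$. By Lemma~\ref{lem:regular}~(1), $N$ coincides with the intersection of the vertex sets of all primitive odd cycles of $G$. Proposition~\ref{rank} together with $2$-connectedness reduces the problem to counting $(|\Psi_f|,|N|)$ for a single $2$-connected non-bipartite block.

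For the ``if'' direction I would verify rank $\in\{1,2\}$ for each of the eight listed families by direct enumeration. In~(1-1), the unique primitive odd cycle $C$ has length $3$ and $|N|=3$; the fundamental sets come in four flavors, three corresponding to dropping one edge of $C$ (as in Lemma~\ref{fundamental}~(2)) and one extra from swapping the cycle-endpoint adjacent to the path for the path-vertex itself, giving $|\Psi_f|=4$ and rank $1$. In each remaining family, fundamental sets are enumerated by the same recipe: choose one of the two sides of each complete-bipartite core and, along each connecting path, choose a parity-matched absorption of path-vertices; case~(2-6) uses instead the triangle formed by the apex $d+1$ together with one vertex from each of $V_1,V_2$. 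In every case the resulting count matches $|N|+1$ or $|N|+2$.

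For the ``only if'' direction I would order the primitive odd cycles $C_1,\dots,C_m$ by increasing length $2k_i+1$. Lemma~\ref{fundamental}~(2) gives $|\Psi_f|\ge 2k_1+1$ while $|N|\le 2k_1+1$, so rank $\le 2$ is tight and demands that three potential sources of excess fundamental sets be nearly absent:
\begin{enumerate}[(a)]
\item a primitive external path attached to $C_1$ at two non-adjacent vertices makes a cycle-vertex regular, dropping $|N|$ by Lemma~\ref{lem:regular}~(2);
\item a primitive external odd path attached to $C_1$ at two adjacent vertices produces two fundamental sets with the same missing edge by Lemma~\ref{lem:regular}~(3), raising $|\Psi_f|$;
\item a second primitive odd cycle $C_2$ attached to $C_1$ inflates $|\Psi_f|$ by Lemma~\ref{fundamental}~(2) applied to $C_2$ while collapsing $|N|$ to $|V(C_1)\cap V(C_2)|$, again raising $|\Psi_f|-|N|$.
\end{enumerate}
Tracking how often each mechanism can trigger while keeping rank $\le 2$ forces the eight configurations listed. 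I would organize this as a joint induction on $m$ and on $|V(G)|-|V(C_1)|$: the single-cycle case ($m=1$) produces the families (1-1), (1-2), (2-4), (2-5), (2-6); the two-cycle case ($m=2$) produces (2-1), (2-2), (2-3); and $m\ge 3$ is excluded.

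The main obstacle is the combinatorial bookkeeping of fundamental sets when vertices outside the odd cycle admit multiple parity-consistent absorptions into the fundamental set. A preparatory lemma classifying the fundamental sets of a complete bipartite graph with one attached handle---mirroring the acceptable-set computation behind Theorem~\ref{thm:bip_class_group}---would streamline the case split; without it, the subdivision of~(2) into six subfamilies arises naturally from the parity of each connecting path and from whether its endpoints lie in the same or different bipartite classes of each core.
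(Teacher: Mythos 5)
Your framework is the right one---the identity $\rank \Cl(\kk[G])=|\Psi_f|-|N|$ for a $2$-connected non-bipartite $G$, and the identification of the non-regular set $N$ with the intersection of all primitive odd cycles via Lemma~\ref{lem:regular}~(1), are exactly the counting the paper uses. But the organizing principle you build on it is factually wrong, and the argument breaks there. Your induction sorts graphs by the number $m$ of primitive odd cycles, with ``$m=1$ yields (1-1), (1-2), (2-4), (2-5), (2-6); $m=2$ yields (2-1)--(2-3); $m\ge 3$ excluded.'' In family (1-2) the odd attaching path closes to a primitive odd cycle through \emph{each} of the $s_2\ge 2$ vertices of the opposite partition class, so $m=s_2\ge 2$ already there; the graph $K_{1,s_1,s_2}^{t_1,t_2}$ of (2-6) contains $s_1s_2\ge 4$ primitive triangles through the apex; and conversely (2-1) has a \emph{unique} primitive odd cycle, since any longer crossing of a core acquires the chord $\{i,j\}$ or $\{k,l\}$. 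So your case split both excludes listed families and mis-sorts others; it cannot be repaired by bookkeeping. Your mechanism (c) reflects the same confusion: a second primitive odd cycle overlapping $C_1$ in all but one vertex (as in (1-2)) raises the rank by exactly one, via Lemma~\ref{lem:regular}~(1), \emph{without} creating any new fundamental set---precisely the situation the theorem allows. The paper instead fixes one cycle $C_m$, writes the rank as (number of fundamental sets beyond the $2k_m+1$ guaranteed by Lemma~\ref{fundamental}~(2)) plus (number of regular vertices on $C_m$), splits by how this excess of $1$ or $2$ is distributed, uses Lemma~\ref{lem:regular}~(2),(3) to pin down which cycle vertices may have degree $\ge 3$, and then deletes the interiors of the resulting cycle arcs to reduce to the bipartite classification of Theorem~\ref{thm:bip_class_group}.

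The ``if'' direction as you sketch it also fails: in (1-1) the unique primitive odd cycle is the attaching path plus the edge $\{i,j\}$ and has length (path length)$+1$, not $3$, so your announced counts $|N|=3$, $|\Psi_f|=4$ are wrong whenever the path is longer than $2$; and an honest enumeration of the fundamental sets of, say, $K_{1,s_1,s_2}^{t_1,t_2}$ is a substantial computation your proposal does not carry out. The paper sidesteps all of this with a structural observation (its unlabelled remark before the proof): every odd cycle passes through the attaching paths, so the edges of those paths lie on no primitive even closed walk and hence appear in no generator of the toric ideal; therefore $\kk[G]$ is a polynomial extension of $\kk[G']$ for the bipartite core(s) $G'$, whence $\Cl(\kk[G])\cong\Cl(\kk[G'])$ is read off from Theorem~\ref{thm:bip_class_group} (together with Proposition~\ref{product} when there are two cores). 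You would need that reduction---or a genuinely completed enumeration---for both directions.
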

%
%
\input{figure_edge2}
%
%
\begin{rem}
Regarding the above constructions, although those graphs are not bipartite due to the additional paths appearing in each case of (1-1),(1-2) and (2-1)---(2-5), 
we observe that every odd cycle in each graph passes through those additional paths. 
Namely, if $C$ and $C'$ are odd cycles in a given graph as above, then $C$ and $C'$ always share the additional paths. 

On the other hand, it is well-known that the toric ideal of $\kk[G]$ is generated by the binomials corresponding to primitive even closed walks appearing in $G$. 
See, e.g. \cite[Section 5.3]{HHO}, for the details. 

Hence, for the graphs $G$ constructed like Theorem~\ref{thm:nonbip_class_group}, 
we see that the variables corresponding to the edges of the additional paths never appear in generators of the toric ideal of $G$. 
This means that $\kk[G]$ is isomorphic to the polynomial extension of $\kk[G']$, where $G'$ is the graph obtained by removing all the edges in the additional paths, 
i.e., $G'$ is $K_{s_1,s_2}$ or two copies of $K_{s_1,s_2}$ or $K_{s_1,s_2}^{t_1,t_2}$ by construction. 
\end{rem}
\begin{proof}[Proof of Theorem~\ref{thm:nonbip_class_group}]
First, suppose that $G$ satisfies one of (1-1),(1-2),(2-1)--(2-6). 
Then we can see that $\Cl(\kk[G])$ is isomorphic to $\Cl(\kk[K_{s_1,s_2}])$, $\Cl(\kk[K_{s_1,s_2}]) \oplus \Cl(\kk[K_{t_1,t_2}])$, 
$\Cl(\kk[K_{s_1,s_2}^{t_1,t_2}])$ or $\Cl(\kk[K_{1,s_1,s_2}^{t_1,t_2}])$, 
and those are isomorphic to $\ZZ$ or $\ZZ^2$ by Theorem~\ref{thm:bip_class_group}. 

\medskip

\noindent
(1) Since $v\in V(G)\setminus V(C_m)$ is regular, that is, $|\Psi_r|\ge d-(2k_m+1)$ and $|\Psi_f|\ge 2k_m+1$ by Lemma~\ref{fundamental}, 
we see that $G$ should contain one extra fundamental set or one extra regular vertex. 

Suppose that $G$ contains one extra fundamental. 
Then $p_{m,0},\ldots, p_{m,2k_m}$ are non-regular and we have $C_1=\cdots =C_m$ by Lemma~\ref{lem:regular} (1). 
By $G\ne C_m$, there exists a primitive odd path $P=x_0x_1\cdots x_l$ whose end vertices $x_0,x_l$ are in $V(C_m)$ and $x_k\notin V(C_m)$ for all $k\in [l-1]$. 
Furthermore, from Lemma~\ref{lem:regular} (2) and (3), we can see that vertices on $C_m$ whose degree are at least $3$ are just only $x_0$ and $x_l$. 
We may assume that $\{x_0,x_l\}=\{p_0,p_{2k_m}\}$. Consider the path $Q=p_{m,0} p_{m,1} \cdots p_{m,2k_m}$ and the graph $G'$ given by removing $Q^{\circ}$ from $G$. 
We can see that $G'$ contains no odd cycles, that is, $G'$ is bipartite and the edges on $Q$ does not appear as generators of toric ideal of $\kk[G]$. 
Since $\Cl(\kk[G]) \cong \Cl(\kk[G']) \cong \ZZ$, $G'$ is a complete bipartite graph $K_{s_1,s_2}$ with $s_1,s_2\ge 2$ 
by Theorem~\ref{thm:bip_class_group} and we see that $G$ is obtained by (1-1).

Suppose that $G$ has one extra regular vertex. We may assume that it is $p_{m,0}$. 
As above, by Lemma~\ref{lem:regular}, we can observe that $\{p_{m,1},p_{m,2},\ldots,p_{m,2k_m}\} \subset V(C_i)$ for all $i\in [m]$ and 
so vertices on $C_m$ whose degree are at least $3$ are just only $p_{m,2k_m}$, $p_{m,0}$ and $p_{m,1}$. 
Consider the path $Q=p_{m,1} p_{m,2} \cdots p_{m,2k_m}$ and the graph $G'$ given by removing $Q^{\circ}$ from $G$. 
We can see that $G'$ has no odd cycles, that is, $G'$ is bipartite and the edges on $Q$ does not appear as generators of toric ideal of $\kk[G]$. 
Since $\Cl(\kk[G]) \cong \Cl(\kk[G']) \ZZ$, $G'$ is a complete bipartite graph $K_{s_1,s_2}$ with $s_1,s_2\ge 2$ by Theorem~\ref{thm:bip_class_group} 
and we see that $G$ is obtained by (1-2). 

\medskip

\noindent
(2) Similarly to (1), $G$ has 
\begin{itemize}
\item[(i)] two extra fundamental sets, 
\item[(ii)] one extra vertex and one extra fundamental set, or 
\item[(iii)] two extra regular vertices.
\end{itemize}

Suppose that (i). 
Then $p_{m,0},\ldots, p_{m,2k_m}$ are non-regular and we have $C_1=\cdots =C_m$ by Lemma~\ref{lem:regular} (1). 
If there exists just one type of paths $P_i=x_{i,0}\cdots x_{i,l_i}$ 
whose end vertices $x_{i,0},x_{l_i}$ are in $V(C_m)$ and $x_{i,k}\notin V(C_m)$ for all $k\in [l_i-1]$, $G$ is obtained by (2-4). 
Suppose that there exist two types of paths $P_1,P_2$. 
We may assume that $\{x_{1,0},x_{1,l_1}\}=\{p_{m,0},p_{m,1}\}$ and $\{x_{2,0},x_{2,l_2}\}=\{p_{m,j},p_{m,j+1}\}$. 
Consider two paths $Q_1=p_{m,0} \cdots p_{m,j}$ and $Q_2=p_{m,j+1} \cdots p_{m,2k_m} p_{m,0}$ and 
the graph $G'$ given by removing $Q_1^{\circ}$ and $Q_2^{\circ}$ from $G$. 
We can observe that $G'$ has two connected components $G_1,G_2$ and they have no odd cycles, that is, they are bipartite. 
Therefore, we have $\Cl(\kk[G]) \cong \Cl(\kk[G_1])\oplus \Cl(\kk[G_2]) \cong \ZZ^2$ and so $G_1,G_2$ are complete bipartite graphs $K_{s_1,s_2},K_{t_1,t_2}$ with $s_1,s_2,t_1,t_2\ge 2$. 
This $G$ is obtained by (2-1). 

Suppose that (ii). We may assume that it is $p_{m,0}$. 
We observe that $\{p_{m,1},\ldots,p_{m,2k_m}\} \subset V(C_i)$ for all $i\in [m]$, and $p_{m,2k_m}$, $p_{m,0}$ and $p_{m,1}$ have degree $3$ or more. 
If the other vertices have degree $2$, then $G$ is obtained by (2-5). 
If there exist the other vertices whose degree is at least $3$, then there exists a primitive odd path $P=x_0 \cdots x_l$ 
with end vertices $\{x_0,x_l\}=\{p_{m,j},p_{m,j+1}\}$ for $j\in [2k_m-1]$. Then this $G$ is obtained by (2-2). 

Suppose that (iii). We may assume that $p_{m,0}$ and $p_{m,j}$ are regular. 
If $k_1<k_m$, $k_1=k_m-1$ because $\{p_{m,1},\ldots,\hat{p}_{m,j},\ldots,p_{m,2k_m}\} \subset C_i$ for all $i\in [m]$. 
However, then $C_m$ has a chord, a contradiction. Thus, $k_1=k_m$. 
If $j\ne 1,2k_m$, the vertices on $C_m$ whose degree are at least $3$ are $p_{m,2k_m},p_{m,0},p_{m,1},p_{m,j-1},p_{m,j}$ and $p_{m,j+1}$. 
This $G$ is obtained by (2-3). 

Suppose that $j=1$ or $2k_m$. 
We may assume that $j=1$. If $k_m\ge 2$, the vertices on $C_m$ whose degree are at least $3$ are $p_{m,2k_m},p_{m,0},p_{m,1},p_{m,2}$. 
Hence, This $G$ is obtained by (2-4). 

Suppose that $j=1$ and $k_m=1$. Note that $G\setminus p_{m,2}$ is bipartite. Let $V_1$ and $V_2$ be the partition of the bipartite graph $G\setminus p_{m,2}$, 
let $S_i=N_G(p_{m,2})\cap V_i$ for $i=1,2$ and let $T_i=V_i\setminus U_i$. 
We show that $G\setminus p_{m,2}$ coincides with $K_{s_1,s_2}^{t_1,t_2}$, where $s_i=|S_i|\ge 2$ and $t_i=|T_i|$ for $i=1,2$. 

Note that all vertices except for $p_{m,2}$ are regular, $V_1$ and $V_2$ are fundamental sets since $G\setminus p_{m,2}$ is connected, and there exists a fundamental set $T$ containing $p_{m,2}$. 
If $\{v_1,v_2\}\notin E(G)$ for some $v_1\in S_1$, $v_2\in S_2$, then $\{v_1 v_2\}$ is an independent set and $B(\{v_1,v_2\})$ is connected. Thus, we can obtain a fundamental set containing $\{v_1 v_2\}$ and it is different from $V_1,V_2,T$. It is a contradiction to $\Cl(\kk[G]) \cong \ZZ^2$.
If $\{u_1,u_2\}\in E(G)$ for some $u_1\in T_1$, $u_2\in T_2$, $\{p_{m,2},u_i\}$ is an independent set and 
we can obtain an independent set $I_i$ by adding $\{p_{m,2},u_i\}$ to some vertices in $T_i$ such that $B(I_i)$ is connected for $i=1,2$, a contradiction by the same reason.  Then we have $T=\{p_{m,2}\}\cup T_1\cup T_2$. 
Finally, if $\{w_1,w_2\}\notin E(G)$ for some $w_1\in T_1$ and $w_2\in S_2$, then $\{w_1,w_2\}$ is an independent set and 
we can obtain an independent set $I$ by adding $\{w_1,w_2\}$ to some vertices in $S_2$ such that $B(I)$ is connected, a contradiction by the same reason. 
Therefore, $G$ satisfies (2-6). 
\end{proof}

\bigskip

\section{The relationships among ${\bf Order}_n$, ${\bf Stab}_n$ and ${\bf Edge}_n$}\label{sec:relation}

Recall that ${\bf Order}_n$, ${\bf Stab}_n$ and ${\bf Edge}_n$ are the sets of unimodular equivalence classes 
of order polytopes, stable set polytopes and edge polytopes such that the associated toric rings have the class groups of rank $n$, respectively. 
This section is devoted to the discussions on the relationships among ${\bf Order}_n$, ${\bf Stab}_n$ and ${\bf Edge}_n$ 
in the cases $n=1,2,3$ by using the results in the previous section. 

\medskip

\subsection{The case $n=1$}\label{sec:n=1}

\begin{prop}
Let $R$ be the Segre product of the polynomial rings $\kk[x_1,\ldots,x_s]$ and $\kk[y_1,\ldots,y_t]$ for some $s,t \in \ZZ_{>0}$. 
Note that $\Cl(R) \cong \ZZ$. Then $R$ is isomorphic to $\kk[\Pi]$, $\kk[\Stab_G]$ and $\kk[H]$ for some poset $\Pi$ and some graphs $G,H$. 

Conversely, for $S=\kk[\Pi]$ or $\kk[\Stab_G]$ or $\kk[H]$ for some poset $\Pi$ or some graphs $G,H$ with $\Cl(S) \cong \ZZ$ such that $S$ is not a polynomial extension, 
$S$ is isomorhic to the Segre product of the polynomial rings $\kk[x_1,\ldots,x_s]$ and $\kk[y_1,\ldots,y_t]$ for some $s,t \in \ZZ_{>0}$. 

In particular, we have ${\bf Order}_1={\bf Stab}_1={\bf Edge}_1$. 
\end{prop}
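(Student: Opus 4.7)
The plan is to prove the two directions separately and then to deduce the set-theoretic equality from the correspondence between isomorphism classes of toric rings and unimodular equivalence classes of IDP polytopes \cite[Theorem 5.22]{BG2}. For the realization direction, I would exhibit explicit models of $R$ in each of the three families: as a Hibi ring, take $\Pi$ to be an instance of $\Pi_1$ (two disjoint chains of suitable lengths $s-1$ and $t-1$), whose $st$ poset ideals produce generators matching the Segre presentation of $R$ with relations $z_{ij} z_{kl} = z_{\min(i,k),\min(j,l)} z_{\max(i,k),\max(j,l)}$, so that $\kk[\Pi] \cong R$; as a stable set ring, take $G$ to be the comparability graph of this $\Pi$, which is the disjoint union of two cliques $K_{s-1} \sqcup K_{t-1}$, and apply Theorem~\ref{X} together with Proposition~\ref{prop:compare} to conclude that $\Stab_G$ is unimodularly equivalent to $\calO_\Pi$; as an edge ring, take $H = K_{s,t}$, for which unfolding the definition gives $\kk[K_{s,t}] = \kk[x_i y_j] = R$ directly.

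For the converse, I would proceed case by case using the classifications of Section~\ref{sec:small}. The Hibi case is immediate from Proposition~\ref{char:order}(1), and the stable set case from Theorem~\ref{thm:stab}(1), which places $\Stab_G$ in the unimodular class of $\calO_{\Pi_1(s_1,s_2)}$. For the edge ring case, I would first reduce to $H$ being $2$-connected: Proposition~\ref{rank} combined with $\Cl(\kk[H]) \cong \ZZ$ forces at most one non-bipartite block, so Proposition~\ref{product} decomposes $\kk[H]$ into a tensor product of block edge rings, and the assumption that $\kk[H]$ is not a polynomial extension then strips away every polynomial tensor factor (coming from bridges or $K_3$-blocks). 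With $H$ $2$-connected, Theorem~\ref{thm:bip_class_group}(1) handles the bipartite case, forcing $H = K_{s_1,s_2}$ with $s_1,s_2 \ge 2$ and $\kk[H] \cong R$. In the non-bipartite case, Theorem~\ref{thm:nonbip_class_group}(1) produces a graph of shape (1-1) or (1-2), but the remark following that theorem observes that $\kk[H]$ is then a polynomial extension of $\kk[K_{s_1,s_2}]$, contradicting our hypothesis; hence this case does not occur.

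Combining both directions, each of the three families yields exactly the Segre products of two polynomial rings of rank at least two (up to polynomial extension). By \cite[Theorem 5.22]{BG2}, the corresponding unimodular equivalence classes of polytopes coincide across the three families, yielding ${\bf Order}_1 = {\bf Stab}_1 = {\bf Edge}_1$.

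The main obstacle I anticipate is the bookkeeping in the edge ring reduction: one must verify that every block other than a bridge or a triangle contributes a positive-rank summand to the class group, so that the constraint $\Cl \cong \ZZ$ together with the absence of polynomial extensions genuinely forces $H$ to consist of a single non-trivial bipartite block. The remaining parts are essentially unrolling definitions and invoking the classification theorems already established in Section~\ref{sec:small}.
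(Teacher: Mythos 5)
Your proposal is correct and takes essentially the same route as the paper, whose proof is a terse assembly of exactly the results you cite: Proposition~\ref{char:order}~(1), Theorems~\ref{thm:stab}~(1), \ref{thm:bip_class_group}~(1) and \ref{thm:nonbip_class_group}~(1), together with the unimodular equivalence $P_{K_{s_1+1,s_2+1}} \sim \calO_{\Pi_1(s_1,s_2)}$ and the observation that the constructions (1-1) and (1-2) are lattice pyramids, i.e.\ polynomial extensions; your explicit block reduction via Propositions~\ref{product} and \ref{rank} just fills in a step the paper leaves implicit. One small slip in your anticipated-obstacle paragraph: the blocks with trivial class group are the bridges and \emph{all} odd cycles, not only bridges and triangles --- the edge ring $\kk[C_{2k+1}]$ is a polynomial ring since the incidence vectors of an odd cycle are linearly independent --- but this does not damage the argument, because such blocks contribute polynomial tensor factors and are therefore excluded by the same ``not a polynomial extension'' hypothesis you already invoke.
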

\begin{proof}
These statements follow from Proposition~\ref{char:order} (1), Theorems~\ref{thm:stab} (1), \ref{thm:bip_class_group} (1) and \ref{thm:nonbip_class_group} (1). 
Note that the edge polytope $P_{K_{s_1+1,s_2+1}}$ is unimodularly equivalent to the order polytope $\calO_{\Pi_1(s_1,s_2)}$ (see \cite{HM}). 
Moreover, the procedures (1-1) and (1-2) in Theorem~\ref{thm:nonbip_class_group} (1) correspond to the lattice pyramid construction. 
\end{proof}

\medskip

\subsection{The case $n=2$}\label{sec:n=2}

\begin{lem}\label{lem:order} 
Let $s_1,s_2,t_1,t_2$ be positive integers and let $d=s_1+s_2+t_1+t_2$.
\begin{itemize} 
\item[(1)] The edge polytope $P_{K_{s_1+1,s_2+1}^{t_1,t_2}}$ is unimodularly equivalent to the order polytope $\calO_{\Pi_3(s_1,s_2,t_1,t_2,0)}$. 
\item[(2)] The edge polytope $P_{K_{1,s_1+1,s_2+1}^{t_1-1,t_2-1}}$ is unimodularly equivalent to the order polytope $\calO_{\Pi_3(s_1,s_2,t_1,t_2,0)}$. 
\end{itemize}
In particular, $P_{K_{s_1+1,s_2+1}^{t_1,t_2}}$ and $P_{K_{1,s_1+1,s_2+1}^{t_1-1,t_2-1}}$ are unimodularly equivalent. 
\end{lem}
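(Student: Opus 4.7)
For (1), the plan is to construct an explicit linear map $A\colon\RR^{d+2}\to\RR^d$ realizing the unimodular equivalence. First I would index the vertices of $K_{s_1+1,s_2+1}^{t_1,t_2}$ as bottom-left $\{1,\ldots,s_1+1\}$, top-left $\{s_1+2,\ldots,s_1+1+t_1\}$, top-right $\{s_1+1+t_1+1,\ldots,s_1+1+t_1+t_2\}$, and bottom-right $\{s_1+1+t_1+t_2+1,\ldots,d+2\}$. Taking the basis $(f_1,\ldots,f_{s_1+t_1},g_1,\ldots,g_{s_2+t_2})$ of $\RR^d$ indexed by the two chains of $\Pi_3(s_1,s_2,t_1,t_2,0)$, I would define
\[
A(\eb_i)=\sum_{k=1}^{s_1+t_1+1-i} f_k\ \ (i\text{ on the left}),\qquad A(\eb_j)=\sum_{k=1}^{d+2-j} g_k\ \ (j\text{ on the right}),
\]
so that $A(\eb_i+\eb_j)$ is the characteristic vector of the pair $(a,b)=(s_1+t_1+1-i,\,d+2-j)$. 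A direct case analysis shows that as $\{i,j\}$ ranges over edges of the graph, $(a,b)$ hits exactly the pairs with $0\le a\le s_1+t_1$ and $0\le b\le s_2+t_2$ \emph{except} those with $a<t_1$ and $b>s_2$; these exceptions correspond to the missing top-left/top-right edges, and are exactly the non-ideals of $\Pi_3(s_1,s_2,t_1,t_2,0)$ because the only non-chain relation in this poset is the cross relation $p_{t_1}\prec p_{t_1+s_1+s_2+1}$. Since both polytopes are $(0,1)$-polytopes, matching the vertex sets suffices. To verify unimodularity I would compute $A(\eb_i-\eb_{i+1})=f_{s_1+t_1+1-i}$ on the left and analogously $g_{d+2-j}$ on the right, so $A$ sends a $\ZZ$-basis of the lattice in the affine hull of $P_{K_{s_1+1,s_2+1}^{t_1,t_2}}$ to a signed permutation of the standard basis of $\ZZ^d$.

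For (2), I would reduce to (1) by exhibiting a unimodular equivalence $P_{K_{s_1+1,s_2+1}^{t_1,t_2}}\cong P_{K_{1,s_1+1,s_2+1}^{t_1-1,t_2-1}}$. Fix a top-left vertex $u$ and a top-right vertex $w$ of $K_{s_1+1,s_2+1}^{t_1,t_2}$; the edges incident to $u$ (resp.\ $w$) are exactly those from $u$ to bottom-right (resp.\ from $w$ to bottom-left). Define $\phi\colon\RR^{d+2}\to\RR^{d+1}$ by $\phi(\eb_u)=\phi(\eb_w)=\eb_{d+1}$ (the apex coordinate) and $\phi(\eb_v)=\eb_v$ otherwise. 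Then $\phi$ collapses the edges at $u$ and $w$ onto the apex-to-bottom edges of $K_{1,s_1+1,s_2+1}^{t_1-1,t_2-1}$ while preserving all remaining edges, so it gives a bijection between the vertex sets of the two edge polytopes. The first affine hull is cut out by $\sum_L x=\sum_R x=1$ and the second by $\sum x=2$; given any integer $y$ in the second hull, the system $x_v=y_v$ for $v\ne u,w$, $x_u=1-\sum_{v\in L\setminus\{u\}}y_v$, $x_w=1-\sum_{v\in R\setminus\{w\}}y_v$ produces a unique integer preimage, so the restriction of $\phi$ to the affine hulls is a lattice isomorphism. Composing with the map from (1) proves (2), and the ``in particular'' statement is then immediate by transitivity.

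The main obstacle will be purely clerical: keeping the four index ranges straight, matching them to the specific cross relation defining $\Pi_3(s_1,s_2,t_1,t_2,0)$, and writing the vertex-to-ideal dictionary unambiguously. Once that dictionary is pinned down, the substantive calculations — checking that $A$ and $\phi$ act as signed permutations on convenient $\ZZ$-bases of the affine-hull lattices — are routine.
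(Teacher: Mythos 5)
Your proof is correct, but it takes a genuinely different route from the paper's. The paper never works with the order polytope directly: it invokes Proposition~\ref{prop:compare} (hence the Hibi--Li Theorem~\ref{X}) to replace $\calO_{\Pi_3(s_1,s_2,t_1,t_2,0)}$ by the chain polytope $\calC_{\Pi_3(s_1,s_2,t_1,t_2,0)}$, and then proves \emph{both} (1) and (2) against that chain polytope --- for (1) by the coordinate projection $\RR^{d+2}\to\RR^d$ forgetting one coordinate on each side of the bipartition (edges then match antichains, which have at most one element per chain), and for (2) by a separate, fairly heavy computation: project away the apex coordinate and apply two explicit unimodular matrices plus a translation to carry the vertices onto antichain indicators. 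You instead map straight onto the order polytope with a ``staircase'' map $A$ (edge $\{i,j\}\mapsto$ indicator of the ideal with prefix sizes $(a,b)=(s_1+t_1+1-i,\,d+2-j)$), which is self-contained: it needs neither Theorem~\ref{X} nor chain polytopes, and your identification of the forbidden pairs $a<t_1$, $b>s_2$ with the missing $K_{t_1,t_2}$-edges is exactly right, since the unique non-chain relation in $\Pi_3(s_1,s_2,t_1,t_2,0)$ is $p_{t_1}\prec p_{t_1+s_1+s_2+1}$. For (2) you also reverse the paper's logical order: you prove the ``in particular'' equivalence $P_{K_{s_1+1,s_2+1}^{t_1,t_2}}\cong P_{K_{1,s_1+1,s_2+1}^{t_1-1,t_2-1}}$ first, by folding one top-left and one top-right vertex onto the apex, and then deduce (2) by composition, whereas the paper derives the ``in particular'' as a corollary of (1) and (2). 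Your lattice-level verifications are the right ones and they go through: on the bipartite side the direction lattice $\{v\in\ZZ^{d+2}:\sum_{V_1}v=\sum_{V_2}v=0\}$ is indeed spanned by consecutive differences, which $A$ sends to the standard basis of $\ZZ^d$ (a plain permutation, in fact --- no signs occur), and your explicit integral inverse for $\phi$ between the affine hulls $\{\sum_{V_1}x=\sum_{V_2}x=1\}$ and $\{\sum x=2\}$ settles that $\phi$ is an affine lattice isomorphism. What each approach buys: the paper's keeps (1) nearly immediate by reusing established machinery, at the cost of the matrix computation in (2) and a dependence on Theorem~\ref{X}; yours is longer on bookkeeping in (1) but avoids both, and the collapsing map $\phi$ gives a transparent combinatorial reason why the two edge polytopes coincide.
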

\begin{proof}
It is enough to show that $P_{K_{s_1+1,s_2+1}^{t_1,t_2}}$ (resp. $P_{K_{1,s_1+1,s_2+1}^{t_1-1,t_2-1}}$) is unimodularly equivalent to 
$\calC(\Pi_3(s_1,s_2,t_1,t_2,0))$  (resp. $\calC(\Pi_3(s_1,s_2,t_1,t_2,0))$). 

\medskip

\noindent
(1) By Definition~\ref{bipartite}, it is straightforward to see that the vertices of $P_{K_{s_1+1,s_2+1}^{t_1,t_2}}$ one-to-one correspond to the antichains of $\Pi_3(s_1,s_2,t_1,t_2,0)$ 
by considering the projection $\RR^{d+2} \rightarrow \RR^d$ which ignores the $1$-th and $d$-th coordinates 
and this projection gives a unimodular transformation between $P_{K_{s_1+1,s_2+1}^{t_1,t_2}}$ and $\calC(\Pi_3(s_1,s_2,t_1,t_2,0))$.

\medskip

\noindent
(2) Consider the projection $\RR^{d+1} \rightarrow \RR^d$ by ignoring the $(d+1)$-th coordinate. 
Then the set of vertices of $P_{K_{1,s_1+1,s_2+1}^{t_1-1,t_2-1}}$ becomes $\{\eb_i+\eb_j : 1 \leq i \leq s_1+t_1, s_1+t_1+t_2 \leq j \leq d\} \cup \{\eb_i+\eb_j : 1 \leq i \leq s_1+1, s_1+t_1+1 \leq j \leq d\} \cup \{\eb_k : 1 \leq k \leq s_1+1 \text{ or } s_1+t_1+t_2 \leq k \leq d\}$. 
By applying a unimodular transformation $\begin{pmatrix}
1 &1 &\cdots &1 &  &  &       & \\
  &1 &       &  &  &  &       & \\
  &  &\ddots &  &  &  &       & \\
  &  &       &1 &  &  &       & \\
  &  &       &  &1 &  &       & \\
  &  &       &  &  &1 &       & \\
  &  &       &  &  &  &\ddots & \\
  &  &       &  &1 &1 &\cdots &1 
\end{pmatrix}$ to those vertices (from the left-hand side) and translating them by $-\eb_1-\eb_d$ and applying a unimodular transformation $\begin{pmatrix}
-1 &  &       &  & \\
   &1 &       &  & \\
   &  &\ddots &  & \\
   &  &       &1 & \\
   &  &       &  &-1
\end{pmatrix}$, the vertices become as follows: \begin{align*}
&\eb_i+\eb_j \mapsto \eb_1+\eb_i+\eb_j+\eb_d \mapsto \eb_i+\eb_j \mapsto \eb_i+\eb_j \\
&(1< i \leq s_1+t_1, \; s_1+t_1+t_2 \leq j < d \text{ or } 1< i \leq s_1+1, \; s_1+t_1+1 \leq j < d) \\
&\eb_i+\eb_d \mapsto \eb_1+\eb_i+\eb_d \mapsto \eb_i \mapsto \eb_i \; (1 < i \leq s_1+t_1) \\
&\eb_1+\eb_j \mapsto \eb_1+\eb_j+\eb_d \mapsto \eb_j \mapsto \eb_j \; (s_1+t_1+1 \leq j < d), \quad
\eb_1+\eb_d \mapsto {\bf 0} \\
&\eb_k \mapsto \eb_1+\eb_k \mapsto \eb_k-\eb_d \mapsto \eb_k+\eb_d \; (1 < k \leq s_1+1), \;\;
\eb_k \mapsto \eb_1+\eb_k \;(s_1+t_1+t_2 \leq k < d) \\
&\eb_1 \mapsto \eb_d, \quad \eb_d \mapsto \eb_1.
\end{align*}
We can directly see that these lattice points one-to-one correspond to the antichains of $\Pi_3(s_1,s_2,t_1,t_2,0)$. 
\end{proof}

\begin{prop}
{\em (1)} Let $G$ be a perfect graph with $\Cl(\kk[\Stab_G]) \cong \ZZ^2$. 
Then $\Stab_G$ is unimodularly equivalent to $\calO_\Pi$ for some poset $\Pi$. 
In particular, we have ${\bf Stab}_2 \subset {\bf Order}_2$. \\
{\em (2)} Let $G$ be a $2$-connected graph with $\Cl(\kk[G]) \cong \ZZ^2$. 
Then $P_G$ is unimodularly equivalent to $\calO_\Pi$ for some poset $\Pi$. 
In particular, we have ${\bf Edge}_2 \subset {\bf Order}_2$. \\
{\em (3)} Let $\Pi$ be a poset with $\Cl(\kk[\Pi]) \cong \ZZ^2$. 
Then $\calO_\Pi$ is unimodularly equivalent to $\calC_{G(\Pi)}$ or $P_G$ for some $G$. 
In particular, ${\bf Order}_2 \subset {\bf Stab}_2 \cup {\bf Edge}_2$. \\
{\em (4)} There exist a graph $G$ and a graph $H$ with $\Cl(\kk[\Stab_G]) \cong \Cl(\kk[H]) \cong \ZZ^2$ 
such that $\Stab_G \not\in {\bf Edge}_2$ and $P_H \not\in {\bf Stab}_2$, respectively. 
\end{prop}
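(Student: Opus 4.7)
Part (1) is immediate from Theorem~\ref{thm:stab}: any $\Stab_G$ with $\Cl(\kk[\Stab_G]) \cong \ZZ^2$ is unimodularly equivalent to $\calO_{\Pi_2(s_1,s_2,s_3,t)}$ or $\calO_{\Pi_3(s_1,s_2,t_1,t_2,t_3)}$, so $\Stab_G \in {\bf Order}_2$.

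For part (2), the plan is to apply Theorem~\ref{thm:bip_class_group} and Theorem~\ref{thm:nonbip_class_group} to enumerate $G$, then match each edge polytope to an order polytope. In the bipartite case $G = K_{s_1,s_2}^{t_1,t_2}$, Lemma~\ref{lem:order}(1) gives $P_G \sim \calO_{\Pi_3}$. Non-bipartite cases (2-2)--(2-5) reduce, via the remark following Theorem~\ref{thm:nonbip_class_group}, to iterated lattice pyramids over $P_{K_{s_1,s_2}^{t_1,t_2}}$, which correspond to $\calO_{\Pi_3}$ with an enlarged $t_3$ parameter. Case (2-6) is Lemma~\ref{lem:order}(2). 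In case (2-1), $\kk[G]$ is a polynomial extension of $\kk[K_{s_1,s_2}] \otimes \kk[K_{t_1,t_2}]$, so $P_G$ is an iterated pyramid over the polytope join of $P_{K_{s_1,s_2}}$ and $P_{K_{t_1,t_2}}$; this is to be identified with $\calO_{\Pi_4}$.

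For part (3), Proposition~\ref{char:order} restricts $\Pi$ to types $\Pi_2, \Pi_3, \Pi_4$. When $\Pi = \Pi_2$ or $\Pi_3$, Proposition~\ref{prop:compare} (via Theorem~\ref{X}) yields $\calO_\Pi \sim \calC_\Pi = \Stab_{G(\Pi)}$. When $\Pi = \Pi_4$, the X-shape at $p_{d+1}$ prevents $\calO_{\Pi_4} \sim \calC_{\Pi_4}$; instead, take the disconnected graph $G = K_{s_1+1,s_2+1} \sqcup K_{t_1+1,t_2+1}$, which has $\Cl(\kk[G]) \cong \ZZ^2$ by Proposition~\ref{product}(1), and verify $P_G \sim \calO_{\Pi_4(s_1,s_2,t_1,t_2)}$ via an explicit unimodular transformation analogous to the proof of Lemma~\ref{lem:order}(2).

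Part (4) then follows by combining (1)--(3): ${\bf Stab}_2$ contains only $\calO_{\Pi_2}$ and $\calO_{\Pi_3}$ types, whereas ${\bf Edge}_2$ contains only $\calO_{\Pi_3}$ and $\calO_{\Pi_4}$ types. Take $G$ to be the edgeless graph on three vertices: it is perfect, $\Cl(\kk[\Stab_G]) \cong \ZZ^2$ by Proposition~\ref{prop:class_stab}, and $\Stab_G = [0,1]^3 \sim \calO_{\Pi_2(1,1,1,0)}$ is not an edge polytope by the classification in part (2), so $\Stab_G \in {\bf Stab}_2 \setminus {\bf Edge}_2$. Take $H = K_{2,2} \sqcup K_{2,2}$: then $P_H \sim \calO_{\Pi_4(1,1,1,1)}$ is not a stable set polytope of any perfect graph by Theorem~\ref{thm:stab}, so $P_H \in {\bf Edge}_2 \setminus {\bf Stab}_2$. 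The main obstacle throughout is the explicit unimodular identification $P_{K_{a,b} \sqcup K_{c,d}} \sim \calO_{\Pi_4(a-1,b-1,c-1,d-1)}$ (and its iterated pyramid variants needed for case (2-1) of part (2)); the dimensions and vertex counts already agree, so a coordinate change in the style of Lemma~\ref{lem:order}(2) should suffice.
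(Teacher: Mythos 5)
Your parts (1)--(3) follow the paper's own route: the same citations (Theorem~\ref{thm:stab} for (1); Theorems~\ref{thm:bip_class_group}, \ref{thm:nonbip_class_group} and Lemma~\ref{lem:order} for (2); Proposition~\ref{char:order} and Proposition~\ref{prop:compare} for (3)), with one cosmetic variant: for the $\Pi_4$ case you take the disjoint union $K_{s_1+1,s_2+1}\sqcup K_{t_1+1,t_2+1}$, whereas the paper glues the two complete bipartite graphs at a vertex into a connected graph $K$; both graphs have edge ring $\kk[K_{s_1+1,s_2+1}]\otimes_\kk \kk[K_{t_1+1,t_2+1}]$, so the polytopes are unimodularly equivalent and your choice is legitimate. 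Be aware, though, that the ``explicit unimodular transformation'' you defer as the main obstacle is exactly the content of the paper's proof of (3); it is not a routine verification. Two concrete errors in your part (2): first, cases (2-2) and (2-3) involve \emph{two} complete bipartite graphs joined by paths, so by the Remark they reduce to two copies of complete bipartite graphs and land in the $\Pi_4$ family together with (2-1); only (2-4) and (2-5) reduce to $K_{s_1,s_2}^{t_1,t_2}$. Second, a lattice pyramid over $\calO_{\Pi_3}$ is \emph{not} ``$\calO_{\Pi_3}$ with an enlarged $t_3$'': a pyramid adds exactly one lattice point, while passing from $t_3$ to $t_3+1$ adds $(s_1+1)(s_2+1)\ge 4$ antichains, hence at least four vertices of $\calC_{\Pi_3}$. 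The statement you actually need is that $\Pyr(\calO_\Pi)$ is unimodularly equivalent to $\calO_{\Pi\cup\{\hat{1}\}}$ (adjoin a new global maximum), which keeps all pyramid cases inside ${\bf Order}_2$; with that substitution your argument for (2) is sound.

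Part (4) is where you genuinely depart from the paper, and your route, while attractive because it is computer-free, has two gaps as written. The paper takes $G=G(\Pi_2(1,1,1,2))$ and a specific $7$-vertex graph $H$, and excludes them from ${\bf Edge}_2$ resp.\ ${\bf Stab}_2$ by enumerating all candidate graphs with the right vertex/edge/lattice-point counts and checking non-equivalence by {\tt MAGMA}; the nontrivial step is precisely that unimodular \emph{non-}equivalence of concrete polytopes must be certified somehow. Your first gap: ``the cube is not an edge polytope by the classification in part (2)'' does not follow, since part (2) only classifies $2$-connected graphs; you must also exclude graphs with cut vertices and disconnected graphs. This is repairable: Propositions~\ref{product} and \ref{rank} reduce any rank-$2$ edge ring to tensor products of the classified pieces, so every polytope in ${\bf Edge}_2$ is an iterated pyramid/join over base polytopes of dimension at least $4$, while $\Stab_G=[0,1]^3$ has dimension $3$. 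Your second gap: Theorem~\ref{thm:stab} only says every member of ${\bf Stab}_2$ is unimodularly equivalent to some $\calO_{\Pi_2}$ or $\calO_{\Pi_3}$; to conclude $P_H\notin{\bf Stab}_2$ you must still prove that $\calO_{\Pi_4(1,1,1,1)}$ is not unimodularly equivalent to \emph{any} such polytope, and this is exactly the kind of check the paper delegates to the computer. It can be closed by an invariant: $\calO_{\Pi_4(1,1,1,1)}$ is $5$-dimensional with $8$ vertices (the X-shape poset has $8$ ideals), while every $5$-dimensional $\calO_{\Pi_2}$ or $\calO_{\Pi_3}$ has at least $10$ vertices, as a short count of ideals over the finitely many parameter choices shows. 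With these two insertions your part (4) becomes a valid, and arguably cleaner, alternative to the paper's enumeration-based proof.
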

\begin{proof}
The statement (1) directly follows from Theorem~\ref{thm:stab} (2). 
The statement (2) follows from Theorems~\ref{thm:bip_class_group} (2), \ref{thm:nonbip_class_group} (2) and Lemma~\ref{lem:order}. 

\medskip

\noindent
(3) By Propositions~\ref{char:order} and \ref{prop:compare}, it is enough to consider the case $\Pi=\Pi_4(s_1,s_2,t_1,t_2)$ for some $s_1,s_2,t_1,t_2\in \ZZ_{>0}$. 
Let $K$ be the bipartite graph on the vertex set $[d+3]$ with the edge set 
\begin{align*}
E(K)&=\{\{i,j\} : i \in \{1,\ldots, t_1, d+2\}, j\in \{t_1+1,\ldots, t_1+t_2, d+3\} \text{ or }\\ 
    &i\in \{t_1+t_2+1,\ldots, t_1+t_2+s_1, d+3\}, j\in \{t_1+t_2+s_1+1,\ldots, d, d+1\}\}.
\end{align*}
Note that $K$ is obtained by identifying some vertex of $K_{s_1+1,s_2+1}$ and some vertex of $K_{t_1+1,t_2+1}$ (see Figure~\ref{graphK}). 

Moreover, let $I_p=\{q\in \Pi_4 : q \prec p\}$ for $p\in \Pi_4$. Note that for any poset ideal $I$ of $\Pi_4$, 
$I$ coincides with the empty set, $I_p$ or $I_p\cup I_q$ for some $p,q\in \Pi_4$. 
We can see that by consider the projection $\RR^{d+3} \to \RR^{d+1}$ ignoring the ($d+2$)-th and ($d+3$)-th coordinates and 
by applying a unimodular transformation 
\scalebox{0.6}{
$\begin{pmatrix}
1 &\cdots  &1        & & & & & & &                                           1& \cdots & 1& 1\\
  & \ddots & \vdots  & & & & & & &                                           1&\cdots & 1 &1 \\
  &        &1        & &       &  & & & &                                     &       &   & \\
  &        &         &1&\cdots &1      & & & &                              \vdots &\vdots & \vdots & \vdots \\
  &        &         & &\ddots &\vdots & & & &                                   & &  & \\
  &        &         & &       &1      & &       &       &           1 &\cdots &1 &1 \\
  &        &         & &       &       &1&\cdots &1      &             &       &  & \\
  &        &         & & &  &   &\ddots &\vdots &   &       &  & \\
  &        & & & & & & &1& &  &  &  \\
  &        & & & & & & &  &1      &\cdots &1 & \\
  &        & & & & & & & &  &\ddots & \vdots&  \\
  &        & & & & & & & &  &       & 1     &  \\
  &        & & & & & & & & 1& \cdots& 1     &1 
\end{pmatrix}$}
to vertices of $P_K$, the vertices become as follows: \begin{align*}
&\eb_i+\eb_{d+3} \mapsto \eb_i \mapsto \sum_{p_k\in I_{p_i}}\eb_k  \; (1\leq i \leq t_1\text{ or } t_1+t_2+s_1+1\leq i \leq d+1), \\
&\eb_i+\eb_{d+2} \mapsto \eb_i \mapsto \sum_{p_k\in I_{p_i}}\eb_k  \; (t_1+1\leq i \leq t_1+t_2), \quad \eb_{d+2}+\eb_{d+3} \mapsto 0, \\
&\eb_i+\eb_{d+1} \mapsto \sum_{p_k\in I_{p_i}}\eb_k  \; (t_1+t_2+1\leq i \leq t_1+t_2+s_1), \\
&\eb_i+\eb_j \mapsto \sum_{p_k\in I_{p_i}\cup I_{p_j}}\eb_k, \\
&(1\leq i \leq t_1, \; t_1+1\leq j \leq t_1+t_2\text{ or } t_1+t_2+1\leq i \leq t_1+t_2+s_1, \; t_1+t_2+s_1+1\leq j \leq d). 
\end{align*}
We can directly see that these lattice points one-to-one correspond to the poset ideals of $\Pi_4(s_1,s_2,t_1,t_2)$.

\medskip

\noindent
(4) Let $G=G(\Pi_2(1,1,1,2))$ (see Figure~\ref{graph1112}) and let $H$ be the graph on the vertex set $\{1,\ldots,7\}$ 
with the edge set $E(G)=\{12,17,26,34,47,56,57,67\}$ (see Figure~\ref{edgeH}). 
Then we have $\Cl(\kk[\Stab_G]) \cong \Cl(\kk[P_G]) \cong \ZZ^2$ by construction. 

If $\Stab_G \in {\bf Edge}_2$, that is, there exists a graph $G'$ such that $P_{G'}$ is unimodularly equivalent to $\Stab_G$, 
then $G'$ satisfies that $G'$ is bipartite and has $7$ vertices and $12$ edges or $G'$ is non-bipartite and has $6$ vertices and $12$ edges. 
We can check by {\tt MAGMA} that for any such graphs $G'$, $P_{G'}$ is not unimodularly equivalent to $\Stab_G$. 

Similarly, if $P_H \in {\bf Stab}_2$, that is, there exists a graph $H'$ such that $\Stab_{H'}$ is unimodularly equivalent to $P_G$, 
then $H'$ has 5 vertices and 8 independent sets. Similarly, we can check by {\tt MAGMA} that 
for any such graphs $H'$, $\Stab_{H'}$ is not unimodularly equivalent to $P_H$. 
\end{proof}

\begin{figure}[h]
{\scalebox{1.0}{
\begin{minipage}{0.8\columnwidth}
\centering
{\scalebox{0.6}{
\begin{tikzpicture}[line width=0.02cm]

\coordinate (N11) at (0,0); 
\coordinate (N12) at (0,1); 
\coordinate (N13) at (0,2); 
\coordinate (N14) at (0,3);
\coordinate (N15) at (0,4); 

\coordinate (N21) at (2,0); 
\coordinate (N22) at (2,1); 
\coordinate (N23) at (2,2); 
\coordinate (N24) at (2.1,2.9);
\coordinate (N25) at (2.5,4); 

\coordinate (N31) at (3,0); 
\coordinate (N32) at (3,1); 
\coordinate (N33) at (3,2); 
\coordinate (N34) at (2.9,2.9);
\coordinate (N35) at (2.5,4); 

\coordinate (N41) at (5,0); 
\coordinate (N42) at (5,1); 
\coordinate (N43) at (5,2); 
\coordinate (N44) at (5,3);
\coordinate (N45) at (5,4);

 
\draw[dotted]  (0,1.8)--(0,2.2);
\draw[dotted]  (2,1.8)--(2,2.2);
\draw[dotted]  (3,1.8)--(3,2.2);
\draw[dotted]  (5,1.8)--(5,2.2);

\draw  (N15)--(N21);
\draw  (N15)--(N22);
\draw  (N15)--(N24);
\draw  (N15)--(N25);

\draw  (N14)--(N21);
\draw  (N14)--(N22);
\draw  (N14)--(N24);
\draw  (N14)--(N25);

\draw  (N12)--(N21);
\draw  (N12)--(N22);
\draw  (N12)--(N24);
\draw  (N12)--(N25);

\draw  (N11)--(N21);
\draw  (N11)--(N22);
\draw  (N11)--(N24);
\draw  (N11)--(N25);

\draw  (N35)--(N41);
\draw  (N35)--(N42);
\draw  (N35)--(N44);
\draw  (N35)--(N45);

\draw  (N34)--(N41);
\draw  (N34)--(N42);
\draw  (N34)--(N44);
\draw  (N34)--(N45);

\draw  (N32)--(N41);
\draw  (N32)--(N42);
\draw  (N32)--(N44);
\draw  (N32)--(N45);

\draw  (N31)--(N41);
\draw  (N31)--(N42);
\draw  (N31)--(N44);
\draw  (N31)--(N45);


\draw [line width=0.05cm, fill=white] (N11) circle [radius=0.15];
\draw [line width=0.05cm, fill=white] (N12) circle [radius=0.15];
\draw [line width=0.05cm, fill=white] (N14) circle [radius=0.15];
\draw [line width=0.05cm, fill=white] (N15) circle [radius=0.15] node[] at (0,4.5) {\Large $d+2$};

\draw [line width=0.05cm, fill=white] (N21) circle [radius=0.15]; 
\draw [line width=0.05cm, fill=white] (N22) circle [radius=0.15];
\draw [line width=0.05cm, fill=white] (N24) circle [radius=0.15];
\draw [line width=0.05cm, fill=white] (N25) circle [radius=0.15] node[] at (2.5,4.5) {\Large $d+3$};

\draw [line width=0.05cm, fill=white] (N31) circle [radius=0.15]; 
\draw [line width=0.05cm, fill=white] (N32) circle [radius=0.15];
\draw [line width=0.05cm, fill=white] (N34) circle [radius=0.15];
\draw [line width=0.05cm, fill=white] (N35) circle [radius=0.15]; 

\draw [line width=0.05cm, fill=white] (N41) circle [radius=0.15];
\draw [line width=0.05cm, fill=white] (N42) circle [radius=0.15];
\draw [line width=0.05cm, fill=white] (N44) circle [radius=0.15];
\draw [line width=0.05cm, fill=white] (N45) circle [radius=0.15] node[] at (5,4.5) {\Large $d+1$};  
\node at (5,0) {$$};

\end{tikzpicture}
}}
\caption{The graph $K$}
\label{graphK}
\end{minipage}
}}
\begin{minipage}{0.49\columnwidth}
\centering
{\scalebox{0.7}{
\begin{tikzpicture}[line width=0.05cm]

\coordinate (N14) at (1,0); 
\coordinate (N15) at (1,1); 
\coordinate (N16) at (-0.5,2); 
\coordinate (N17) at (2.5,2);

\coordinate (Nt1) at (3.5,1);


\draw (N14)--(N15);
\draw (N15)--(N16);
\draw (N15)--(N17);
\draw (N14)--(N16);
\draw (N14)--(N17);


\draw [line width=0.05cm, fill=white] (N14) circle [radius=0.15] node[] at (1,-0.5) {\Large $1$};
\draw [line width=0.05cm, fill=white] (N16) circle [radius=0.15] node[above left] {\Large $3$};
\draw [line width=0.05cm, fill=white] (N15) circle [radius=0.15] node[] at (1,1.5) {\Large $2$};
\draw [line width=0.05cm, fill=white] (N17) circle [radius=0.15] node[above right]  {\Large $4$};

\draw [line width=0.05cm, fill=white] (Nt1) circle [radius=0.15] node[below right] {\Large $5$};


\end{tikzpicture} }}
\caption{\\ \hspace{-0.4cm} The graph $G(\Pi_2(1,1,1,2))$}
\label{graph1112}
\end{minipage}
\begin{minipage}{0.49\columnwidth}
\centering
{\scalebox{0.7}{
\begin{tikzpicture}[line width=0.05cm]

\coordinate (4) at (2.5,2); 

\coordinate (1) at (0,0); 
\coordinate (3) at (0,2); 

\coordinate (2) at (2,0);

\coordinate (5) at (5,0); 
\coordinate (7) at (5,2); 

\coordinate (6) at (3,0);


\draw (1)--(2);
\draw (1)--(4);
\draw (3)--(2);
\draw (3)--(4);

\draw (4)--(5);
\draw (4)--(7);
\draw (6)--(5);
\draw (6)--(7);


\draw [line width=0.05cm, fill=white] (1) circle [radius=0.15]  node[below left] {\Large $1$};
\draw [line width=0.05cm, fill=white] (2) circle [radius=0.15]  node[below left] {\Large $2$};
\draw [line width=0.05cm, fill=white] (3) circle [radius=0.15]  node[below left] {\Large $6$}; 
\draw [line width=0.05cm, fill=white] (4) circle [radius=0.15]  node[] at (2.5, 1.5) {\Large $7$}; 
\draw [line width=0.05cm, fill=white] (5) circle [radius=0.15]  node[below right] {\Large $4$}; 
\draw [line width=0.05cm, fill=white] (6) circle [radius=0.15]  node[below right] {\Large $3$}; 
\draw [line width=0.05cm, fill=white] (7) circle [radius=0.15]  node[below right] {\Large $5$};


\end{tikzpicture} }}
\caption{The graph $H$}
\label{edgeH}
\end{minipage}
\end{figure}

\bigskip

\subsection{The case $n=3$}\label{sec:n=3}
We conclude the present paper by providing examples showing that 
there is no inclusion among ${\bf Order}_3$, ${\bf Stab}_3$ and ${\bf Edge}_3$. 

We define the following three objects: a poset $\Pi$, a perfect graph $\Gamma$ and a connected graph $G$. 
\begin{itemize}
\item Let $\Pi=\{z_1,\ldots,z_6\}$ equipped with the partial orders $z_1 \prec z_3 \prec z_4$ and $z_2 \prec z_3 \prec z_5$. 
Namely, $\Pi$ is the disjoint union of the ``X-shape'' poset and one point. See Figure~\ref{posetX}. 
Then we see from \eqref{eq:Hibi} that $\Cl(\kk[\Pi]) \cong \ZZ^3$. 
\item Let $\Gamma$ be the graph on the vertex set $\{1,\ldots,6\}$ with the edge set 
$$E(\Gamma)=\{15,16,24,26,34,35,45,46,56\},$$
See Figure~\ref{graphG}. 
Then $\Gamma$ is perfect since $\Gamma$ is chordal. Moreover, $\Gamma$ contains four maximal cliques: $\{1,5,6\},\{2,4,6\},\{3,4,5\}$ and $\{4,5,6\}$. 
Thus, we see that $\Cl(\kk[\Stab_\Gamma]) \cong \ZZ^3$. 
\item Let $G=K_{2,2,2}$ be the complete tripartite graph. Namely, $V(G)=\{1,\ldots,6\}$ with $$E(G)=\{13,14,15,16,23,24,25,26,35,36,45,46\}.$$
See Figure~\ref{K222}. The class groups of the edge rings of complete multipartite graphs are investigated in \cite{HM}. 
By \cite[Theorem 1.3]{HM}, we see that $\Cl(\kk[G]) \cong \ZZ^3$. 
\end{itemize}

\begin{figure}[h]

\begin{minipage}{0.32\columnwidth}
\centering
{\scalebox{0.7}{
\begin{tikzpicture}[line width=0.05cm]

\coordinate (1) at (0,0); 
\coordinate (2) at (2,0); 
\coordinate (3) at (1,1); 
\coordinate (4) at (0,2);
\coordinate (5) at (2,2); 

\coordinate (6) at (3,1);

 
\draw (1)--(3);
\draw (2)--(3);
\draw (3)--(4);
\draw (3)--(5);


\draw [line width=0.05cm, fill=white] (1) circle [radius=0.15] node[below left] {\Large $1$}; 
\draw [line width=0.05cm, fill=white] (2) circle [radius=0.15] node[below right] {\Large $2$};
\draw [line width=0.05cm, fill=white] (3) circle [radius=0.15] node[] at (1,0.5) {\Large $3$};
\draw [line width=0.05cm, fill=white] (4) circle [radius=0.15] node[below left] {\Large $4$};
\draw [line width=0.05cm, fill=white] (5) circle [radius=0.15] node[below right] {\Large $5$};
\draw [line width=0.05cm, fill=white] (6) circle [radius=0.15] node[] at (3,0.5) {\Large $6$};


\end{tikzpicture}
} }
\caption{The poset $\Pi$}
\label{posetX}
\end{minipage}
\begin{minipage}{0.32\columnwidth}
\centering
{\scalebox{0.7}{
\begin{tikzpicture}[line width=0.05cm]

\coordinate (1) at (0,0); 
\coordinate (6) at (-1.73,1); 
\coordinate (2) at (-1.73,3); 
\coordinate (4) at (0,4);
\coordinate (3) at (1.73,3); 
\coordinate (5) at (1.73,1); 


\draw (1)--(5);
\draw (1)--(6);
\draw (2)--(4);
\draw (2)--(6);
\draw (3)--(4);
\draw (3)--(5);
\draw (4)--(5);
\draw (4)--(6);
\draw (5)--(6);


\draw [line width=0.05cm, fill=white] (1) circle [radius=0.15] node[] at (0,-0.5) {\Large 1}; 
\draw [line width=0.05cm, fill=white] (2) circle [radius=0.15] node[above left] {\Large 2};
\draw [line width=0.05cm, fill=white] (3) circle [radius=0.15] node[above right] {\Large 3};
\draw [line width=0.05cm, fill=white] (4) circle [radius=0.15] node[] at (0,4.5) {\Large 4};
\draw [line width=0.05cm, fill=white] (5) circle [radius=0.15] node[below right] {\Large 5};
\draw [line width=0.05cm, fill=white] (6) circle [radius=0.15] node[below left] {\Large 6};


\end{tikzpicture}
} }
\caption{The graph $\Gamma$}
\label{graphG}
\end{minipage}
\begin{minipage}{0.32\columnwidth}
\centering
{\scalebox{0.7}{
\begin{tikzpicture}[line width=0.05cm]

\coordinate (1) at (0.5,0); 
\coordinate (2) at (1,-1); 

\coordinate (3) at (4,-1); 
\coordinate (4) at (4.5,0); 

\coordinate (6) at (2,2); 
\coordinate (5) at (3,2); 

\draw (1)--(3);
\draw (1)--(4);
\draw (1)--(5);
\draw (1)--(6);
\draw (2)--(3);
\draw (2)--(4);
\draw (2)--(5);
\draw (2)--(6);
\draw (3)--(5);
\draw (3)--(6);
\draw (4)--(5);
\draw (4)--(6);


\draw [line width=0.05cm, fill=white] (1) circle [radius=0.15] node[below left] {\Large 1}; 
\draw [line width=0.05cm, fill=white] (2) circle [radius=0.15] node[below left] {\Large 2};
\draw [line width=0.05cm, fill=white] (3) circle [radius=0.15] node[below right] {\Large 3};
\draw [line width=0.05cm, fill=white] (4) circle [radius=0.15] node[below right] {\Large 4};
\draw [line width=0.05cm, fill=white] (5) circle [radius=0.15] node[] at (3,2.5) {\Large 5};
\draw [line width=0.05cm, fill=white] (6) circle [radius=0.15] node[] at (2,2.5) {\Large 6};


\end{tikzpicture}
}}
\caption{The graph $K_{2,2,2}$}
\label{K222}
\end{minipage}
\end{figure}

We can see that $\calO_\Pi \not\in {\bf Stab}_3 \cup {\bf Edge}_3$, $\Stab_\Gamma \not\in {\bf Order}_3 \cup {\bf Edge}_3$ and 
$P_G \not\in {\bf Order}_3 \cup {\bf Stab}_3$ as follows. 

\noindent
\underline{$\calO_\Pi \not\in {\bf Stab}_3 \cup {\bf Edge}_3$}: Consider $\calO_\Pi$. 

Suppose that there exists a perfect graph $\Gamma'$ such that $\Stab_{\Gamma'}$ is unimodularly equivalent to $\calO_\Pi$. 
Then $\Gamma'$ has $6$ vertices and non-trivial $4$ independent sets. 
Since such graphs are finitely many, we can check by {\tt MAGMA} that their stable set polytopes are not unimodularly equivalent to $\calO_\Pi$. 

Similarly, suppose that there exists a graph $G'$ such that $P_{G'}$ is unimodularly equivalent to $\calO_\Pi$. 
Then $G'$ is a bipartite graph on $8$ vertices or a non-bipartite graph on $7$ vertices. 
Since $\Cl(\kk[G']) \cong \ZZ^3$, $G'$ contains at most one non-bipartite block by Proposition~\ref{rank}. 
We can also check that edge polytopes of such graphs are not unimodularly equivalent to $\calO_\Pi$. 

\smallskip

Proofs of $\Stab_\Gamma \not\in {\bf Order}_3 \cup {\bf Edge}_3$ and $P_G \not\in {\bf Order}_3 \cup {\bf Stab}_3$ 
can be performed in the similar way to the above discussions. 

\bigskip

\end{document}